\title[Counting smaller elements in Tamari lattices]{Counting smaller elements in the Tamari and $m$-Tamari lattices}
\author{Gr\'egory Chatel, Viviane Pons}
\address{Laboratoire d'Informatique Gaspard Monge, Université Paris-Est
    Marne-la-Vallée. \\ Fakultät für Mathematik, Universität Wien}
\keywords{binary trees, Tamari lattice, Tamari intervals}
\numberwithin{equation}{section}
\newtheorem{Theoreme}{Theorem}[section]
\newtheorem{Proposition}[Theoreme]{Proposition}
\newtheorem{Lemme}[Theoreme]{Lemma}
\newtheorem{Definition}[Theoreme]{Definition}
\newcommand{\NN}{\mathbb{N}}
\DeclareMathOperator{\PBT}{\textbf{PBT}}
\DeclareMathOperator{\HT}{\textbf{H}}
\DeclareMathOperator{\ET}{\textbf{E}}
\DeclareMathOperator{\PT}{\textbf{P}}
\DeclareMathOperator{\B}{B}
\DeclareMathOperator{\BB}{\mathbb{B}}
\DeclareMathOperator{\BT}{\mathcal{B}}
\DeclareMathOperator{\Bm}{B^{(m)}}
\newcommand{\Bk}[1]{\B^{(#1)}} 
\DeclareMathOperator{\BR}{R} 
\DeclareMathOperator{\BBm}{\mathbb{B}^{(m)}}
\newcommand{\BTm}[1]{\BT_{#1}^{(m)}} 
\newcommand{\BTk}[2]{\BT_{#1}^{(#2)}}
\newcommand{\Phim}{\Phi^{(m)}} 
\DeclareMathOperator{\PI}{\mathcal{P}}
\DeclareMathOperator{\PIm}{\mathcal{P}^{(m)}}
\newcommand{\Tamnm}{\mathcal{T}_{n}^{(m)}} 
\newcommand{\Tam}[2]{\mathcal{T}_{#1}^{(#2)}}
\newcommand{\ExtL}{\mathrm{ExtL}}
\newcommand{\trprec}{\vartriangleleft} 
\newcommand{\ntrprec}{\ntriangleleft}
\DeclareMathOperator{\Itrees}{trees} 
\DeclareMathOperator{\Isize}{size} 
\newcommand{\pleft}{\vec{\bullet}} 
\newcommand{\pright}{\overleftarrow{\delta}} 
\newcommand{\polleft}{\succ}
\newcommand{\polright}{\prec_\delta}
\newcommand{\polrightx}{\prec_{\frac{\delta}{x}}} 
\newcommand{\prightx}{\tfrac{\pright}{x}}
\newcommand{\SI}{\mathbb{S}} 
\newcommand\dec{F_{\ge}}
\newcommand\inc{F_{\le}}
\newcolumntype{C}{>{\centering\arraybackslash} m{3.2cm}}
\newcolumntype{D}{>{\centering\arraybackslash} m{3.5cm}}
\definecolor{darkGreen}{RGB}{23,103,1}
\newcommand{\red}[1]{\textbf{\textcolor{red}{#1}}}
\newcommand{\blue}[1]{\textcolor{blue}{#1}}
\newcommand{\green}[1]{\textcolor{darkGreen}{#1}}
\tikzstyle{Red} = [color = red]
\tikzstyle{Blue} = [color = blue]
\tikzstyle{Green} = [color = darkGreen]
\tikzstyle{Gray} = [color = gray]
\tikzstyle{Path} = [line width = 1.2]
\tikzstyle{StrongPath} =  [line width=2]
\tikzstyle{DPoint} = [fill, radius=0.1]
\tikzstyle{Line1} = [dashed]
\tikzstyle{Line2} = [dotted, ultra thick]
\tikzstyle{Point} = [fill, radius=0.08]
\tikzstyle{RedPoint} = [color = red, fill, radius=0.08]
\tikzstyle{BluePoint} = [color = blue, fill, radius=0.08]
\tikzstyle{GreenPoint} = [color = darkGreen, fill, radius=0.08]
\tikzstyle{RedPath} = [color = red]
\tikzstyle{BluePath} = [color = blue]
\tikzstyle{GreenPath} = [color = darkGreen]
\tikzstyle{GrayPath} = [color = gray]
\tikzstyle{StrongPath} =  [line width=2.5]
\tikzstyle{StrongPath2} =  [line width=1.5]
\tikzstyle{Leaf} = [color = gray]
\tikzstyle{RedLabel} = [color = red]
\tikzstyle{BlueLabel} = [color = blue]
\tikzstyle{GreenLabel} = [color = darkGreen]
\tikzstyle{Label} = [color = black]
\begin{document}

\maketitle

\begin{abstract}
We introduce new combinatorial objects, the interval-posets, that encode intervals of the Tamari lattice. We then find a combinatorial interpretation of the bilinear operator that appears in the functional equation of Tamari intervals described by Chapoton. Thus, we retrieve this functional equation and prove that the polynomial recursively computed from the bilinear operator on each tree $T$ counts the number of trees smaller than $T$ in the Tamari order. 

Then we show that a similar $(m+1)$-linear operator is also used in the functional equation of $m$-Tamari intervals. We explain how the $m$-Tamari lattices can be interpreted in terms of $(m+1)$-ary trees or a certain class of binary trees. We then use the interval-posets to recover the functional equation of $m$-Tamari intervals and to prove a generalized formula that counts the number of elements smaller than or equal to a given tree in the $m$-Tamari lattice.  
\end{abstract}

\section{Introduction}
\label{sec:Intro}

The combinatorics of planar binary trees is known to have very interesting algebraic properties. Loday and Ronco first introduced the Hopf Algebra $\PBT$ based on these objects \cite{PBT1}. It was re-built by Hivert, Novelli and Thibon \cite{PBT2} through the introduction of the sylvester monoid. The structure of $\PBT$ involves a very nice object which is connected to both algebra and classical algorithmic: the \emph{Tamari lattice}. 

It was introduced by Tamari himself in 1962 as an order on formal bracketings \cite{Tamari1} and was proved later to be a lattice \cite{Tamari2}. It can be realized as a polytope called the associahedron. On binary trees, it can be described by a very common operation in algorithmic: the \emph{right rotation} (see Figure \ref{fig:tree-right-rotation}). More generally, the cover relations of the Tamari order can be translated to many other combinatorial objects counted by Catalan numbers \cite{Stanley}, like Dyck paths.

In this paper, we study the enumeration of the intervals of the Tamari lattice. Surprisingly, the number of intervals is given by a very beautiful formula
\begin{equation}
\label{eq:intervals-formula}
I_n =  \frac{2}{n(n +1)} \binom{4 n + 1}{n - 1},
\end{equation}
where $I_n$ is the number of intervals of the Tamari lattice of binary trees of size $n$. It was proven by Chapoton \cite{Chap} using a functional equation on the generating function of the intervals. Very recently, Bergeron and Préville-Ratelle introduced a new set of lattices generalizing the Tamari lattice \cite{BergmTamari}. They are called the $m$-Tamari lattices and their elements are counted by the $m$-Catalan numbers. In this case also, one can obtain a formula counting the number intervals
\begin{equation}
\label{eq:m-intervals-formula}
I_{n,m} = \frac{m+1}{n(mn +1)} \binom{(m+1)^2 n + m}{n - 1}.
\end{equation} 
This was conjectured in \cite{BergmTamari} and proved in \cite{mTamari}. The proof also uses a functional equation that generalizes the classical case studied by Chapoton.

Here, we propose refined versions of both results by studying a new object that we call \emph{interval-poset}. Each interval-poset corresponds to an interval of the Tamari lattice. To construct these objects, we use the strong relations between the Tamari order and the weak order on permutations. It has been known since Bj\"orner and Wachs \cite{BW} that linear extensions of a certain labelling of binary trees correspond to intervals of the weak order on permutations. This was more explicitly described in \cite{PBT2} with sylvester classes. The elements of the basis $\PT$ of $\PBT$ are indexed by binary trees and defined as a sum on a sylvester class of elements of $\textbf{FQSym}$. The $\PBT$ algebra also admits two other bases $\HT$ and $\ET$. An element of $\HT$ (resp. $\ET$) is a sum of elements $\PT_T$ over an initial (resp. final) interval of Tamari lattice. They can be indexed by planar forests and, with a well chosen labelling, their linear extensions are intervals of the weak order on permutations corresponding to a reunion of sylvester classes. By combining the forests of the initials and finals intervals of two comparable trees in one single poset, we obtain what we call an interval-poset. Its linear extensions are exactly the sylvester classes corresponding to the interval in the weak order. This new object has nice combinatorial properties and allows to perform computations on Tamari intervals.

Thereby, we give a new proof of the formula of Chapoton \eqref{eq:intervals-formula}. This proof is based on the study of a bilinear operator that already appeared in \cite{Chap} but was not explored yet. It leads to the definition of a new family of polynomials:

\begin{Definition}
\label{def:tamari-polynomials}
Let $T$ be a binary tree, the polynomial $\BT_T(x)$ is recursively defined by
\begin{align}
\notag
\BT_\emptyset &:= 1 \\
\notag
\BT_T(x) &:= x \BT_L(x) \frac{x \BT_R(x) - \BT_R(1)}{x - 1}
\end{align}
where $L$ and $R$ are respectively the left and right subtrees of $T$. We call $\BT_T(x)$ the \emph{Tamari polynomial} of $T$ and the set of \emph{Tamari polynomials} is the image of the map $T \mapsto \BT_T(x)$.
\end{Definition}

This family of polynomials is yet unexplored in this context but seems to appear in a different computation made by Chapoton on rooted trees \cite{ChapBiVar}. We give all polynomials for binary trees of size $n\leq4$ in Figure \ref{fig:tam-poly}. Our approach on Tamari interval-posets allows us to prove the following theorem in Section \ref{sec:main-result}:

\begin{Theoreme}
\label{thm:smaller-trees}
Let $T$ be a binary tree. Its Tamari polynomial $\BT_T(x)$ counts the trees smaller than or equal to $T$ in the Tamari order according to the number of nodes on their leftmost branch. In particular, $\BT_T(1)$ is the number of trees smaller than or equal to $T$.

Symmetrically, if $\tilde{\BT}_T$ is defined by exchanging the role of left and right children in Definition \ref{def:tamari-polynomials}, then it counts the number of trees greater than or equal to $T$ according to the number of nodes on their right border.
\end{Theoreme}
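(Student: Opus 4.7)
The plan is to prove the first statement by induction on $n = |T|$, leveraging the interval-poset encoding of Tamari intervals established earlier in the paper. The symmetric claim for $\tilde{\BT}_T$ will then follow from the standard left--right anti-isomorphism of the Tamari lattice that swaps every left/right child and reverses the order.

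A preliminary calculation unpacks the recursion combinatorially. Writing $\BT_R(x) = \sum_{j\ge 0} b_j x^j$, the geometric identity $\frac{x^{j+1}-1}{x-1} = 1 + x + \cdots + x^j$ gives
\[
\frac{x\,\BT_R(x) - \BT_R(1)}{x-1} \;=\; \sum_{j \ge 0} b_j\,(1 + x + x^2 + \cdots + x^j),
\]
so, interpreting $b_j$ (by induction) as the number of trees $R' \le R$ whose leftmost branch has $j$ nodes, Definition~\ref{def:tamari-polynomials} predicts a bijection between $\{S \le T\}$ and triples $(L', R', \ell)$ with $L' \le L$, $R' \le R$ and $0 \le \ell \le \mathrm{lmb}(R')$, the leftmost branch of $S$ then having length $1 + \mathrm{lmb}(L') + \ell$.

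I would construct the forward map explicitly: given such a triple, form the tree with root labeled $k = |L|+1$, left subtree $L'$ and right subtree $R'$, then apply $\ell$ successive left rotations at the edge from $k$ to its right child. Each rotation raises one further node of the leftmost branch of $R'$ above $k$ on the leftmost branch of the result, so the leftmost-branch length of the resulting tree $S$ is precisely $\ell + 1 + \mathrm{lmb}(L')$. Since left rotations decrease in Tamari order and $L' \le L$, $R' \le R$, one concatenates these inequalities in the two halves to conclude $S \le T$.

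The main obstacle is establishing the inverse, and this is where the interval-poset framework proves essential. Given $S \le T$ with interval-poset $P$, one must show (a) that restricting $P$ to the label sets $\{1,\dots,k-1\}$ and $\{k+1,\dots,n\}$ yields valid interval-posets encoding sub-intervals $[S_L, L]$ and $[S_R, R]$, from which $L'$ and $R'$ are recovered inductively; and (b) that the remaining poset data---the relations incident to $k$ together with every cross-relation between the two halves---collapses to a single integer $\ell$ counting how many topmost nodes of the leftmost branch of $R'$ are forced to lie above $k$ in $S$. A key structural lemma to verify along the way is that $k$ is always on the leftmost branch of $S$ for any $S \le T$, which I expect to prove by showing that, for the node labeled $k$, membership on the leftmost branch is preserved under Tamari cover moves (and is trivial for $T$ itself, where $k$ is the root). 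Once these two points are secured, the bijection combined with the generating-function expansion above reconstructs the defining recursion for $\BT_T$, closing the induction.
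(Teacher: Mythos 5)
Your proposal is correct and is essentially the paper's own argument: the bijection you describe between $\{S \le T\}$ and triples $(L',R',\ell)$ with $L'\le L$, $R'\le R$ and $0\le \ell$ at most the leftmost-branch length of $R'$ is precisely the decomposition $S_T = \BB(S_{T_L},S_{T_R})$ of Proposition~\ref{prop:sum-composition}, your step (b) (the cross-relations collapsing to a single integer $\ell$) is the definition of the composition $\BB$, and the generating-function bookkeeping is Proposition~\ref{prop:combinatorial-equivalence-composition}. Your auxiliary lemma that the root label $k$ remains on the leftmost branch of every $S\le T$ is, in the interval-poset language, immediate from the fact that the increasing relations $i\trprec k$ for $i<k$ forbid any decreasing relation $k\trprec a$ with $a<k$.
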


It was shown in \cite{mTamari} that the $m$-Tamari lattices can be seen as ideals of the Tamari lattice of size $n \times m$. Therefore, an interval of the $m$-Tamari lattice is an interval of Tamari which satisfies some conditions. This can be expressed in terms of interval-posets. Thus, it allows us to easily generalize our results to the $m$-Tamari case. We re-obtain the functional equation on the generating function described in \cite{mTamari} along with a generalization of Theorem \ref{thm:smaller-trees} to count smaller elements in the $m$-Tamari lattices.

We first recall in Section \ref{sec:def} some definitions and properties of the Tamari lattice. We then introduce the notion of \emph{interval-poset} to encode a Tamari interval. In Section \ref{sec:tamari-polynomials}, we show the implicit bilinear operator that appears in the functional equation  of the generating functions of Tamari intervals. We then explain how interval-posets can be used to give a combinatorial interpretation of this bilinear operator and thereby give a new proof of the functional equation. Theorem \ref{thm:smaller-trees} follows naturally. In Section \ref{sec:bivar}, we discuss the similarity between Tamari polynomials and some bivariate polynomials that appeared in the context of flows of rooted trees \cite{ChapBiVar}.

Section \ref{sec:mTamari} is dedicated to the study of the $m$-Tamari lattices defined in \cite{BergmTamari}. A functional equation for the intervals of these lattices is shown in \cite{mTamari} and contains a $m\!+\!1$-linear operator that generalizes the binary case. In Section \ref{sec:m-trees}, we explain how the $m$-Tamari lattice can be seen on a certain class of binary trees which are in bijection with $(m+1)$-ary trees. Thus, we are able to use again the interval-posets with a generalized combinatorial $m\!+\!1$-operator to reobtain the functional equation of intervals of the $m$-Tamari order. We then prove Theorem \ref{thm:smaller-mtrees}, the generalization of Theorem \ref{thm:smaller-trees} for the $m$-Tamari order.

\begin{figure}[ht]

\def \sscale{0.2}
\def \fpath{figures/trees/}

\begin{tabular}{ccc}
{$\!
\begin{aligned}
\BT_{\scalebox{\sscale}{\input{\fpath T2-1}}}(x) &= x^2 \\
\BT_{\scalebox{\sscale}{\input{\fpath T2-2}}}(x) &= x^2 + x \\
\BT_{\scalebox{\sscale}{\input{\fpath T3-1}}}(x) &= x^3\\
\BT_{\scalebox{\sscale}{\input{\fpath T3-2}}}(x) &= x^3 + x^2\\
\BT_{\scalebox{\sscale}{\input{\fpath T3-3}}}(x) &= x^3 +x^2 + x\\
\BT_{\scalebox{\sscale}{\input{\fpath T3-4}}}(x) &= x^3 + x^2\\
\BT_{\scalebox{\sscale}{\input{\fpath T3-5}}}(x) &= x^3 +2x^2 + 2x\\
\end{aligned}$}
&
{$\!
\begin{aligned}
\BT_{\scalebox{\sscale}{\input{\fpath T4-1}}}(x)  &= x^4 \\
\BT_{\scalebox{\sscale}{\input{\fpath T4-2}}}(x)  &= x^4 + x^3 \\
\BT_{\scalebox{\sscale}{\input{\fpath T4-3}}}(x)  &= x^4 + x^3 \\
\BT_{\scalebox{\sscale}{\input{\fpath T4-4}}}(x)  &= x^4 + x^3 + x^2 \\
\BT_{\scalebox{\sscale}{\input{\fpath T4-5}}}(x)  &= x^4 + 2x^3 + 2x^2 \\
\BT_{\scalebox{\sscale}{\input{\fpath T4-6}}}(x)  &= x^4 + x^3 \\
\BT_{\scalebox{\sscale}{\input{\fpath T4-7}}}(x)  &= x^4 + 2x^3 + x^2 \\
\end{aligned}$}
&
{$\!
\begin{aligned}
\BT_{\scalebox{\sscale}{\input{\fpath T4-8}}}(x)  &= x^4 + x^3 + x^2 \\
\BT_{\scalebox{\sscale}{\input{\fpath T4-9}}}(x)  &= x^4 + 2x^3 + 2x^2 \\
\BT_{\scalebox{\sscale}{\input{\fpath T4-10}}}(x)  &= x^4 + x^3 + x^2 + x \\
\BT_{\scalebox{\sscale}{\input{\fpath T4-11}}}(x)  &= x^4 + 2x^3 + 2x^2 + 2x \\
\BT_{\scalebox{\sscale}{\input{\fpath T4-12}}}(x)  &= x^4 + 2x^3 + 2x^2 + 2x\\
\BT_{\scalebox{\sscale}{\input{\fpath T4-13}}}(x)  &= x^4 + 2x^3 + 3x^2 + 3x \\
\BT_{\scalebox{\sscale}{\input{\fpath T4-14}}}(x)  &= x^4 + 3x^3 + 5x^2 + 5x\\
\end{aligned}$}
\end{tabular}

\caption{Tamari polynomials for binary trees of size~$n\leq4$.}
\label{fig:tam-poly}
\end{figure}

\section{Interval-posets of Tamari lattice}
\label{sec:def}

\subsection{The Tamari order on paths and binary trees}
\label{sub-sec:tamari}

Originally, the Tamari lattice has been described on bracketing \cite{Tamari1} but it is also commonly defined on Dyck paths.

\begin{Definition}
A \emph{Dyck path} of size $n$ is a lattice path from the origin $(0,0)$ to the point $(2n,0)$ made from a sequence of up steps $(1,1)$ and down steps $(1,-1)$ such that the path stays above the line $y=0$. 
\end{Definition}

A Dyck path can also be considered as a binary word by replacing up steps by the letter $1$ and down steps by $0$. We call a Dyck path \emph{primitive} if it only touches the line $y=0$ on its end points. A \emph{rotation} consists of switching a down step $d$ with the primitive Dyck path starting right after $d$, see Figure \ref{fig:rot-dyck}.

\begin{figure}[ht]
\scalebox{0.8}{
\input{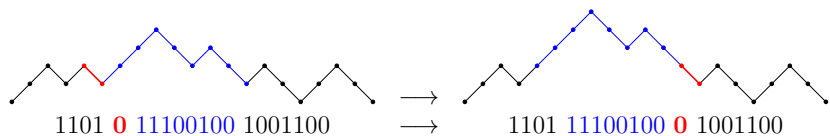}
}
\caption{Rotation on Dyck Paths.}
\label{fig:rot-dyck}
\end{figure}

The Tamari order on Dyck paths is defined as the transitive and reflexive closure of the rotation operation: a path $D'$ is greater than a path $D$ if it can be obtained by applying a sequence of right rotation on $D$. It is indeed an order and even a lattice \cite{Tamari1, Tamari2}. See Figure \ref{fig:tamari-dyck} for the lattices on Dyck paths of sizes 3 and 4.

\begin{figure}[ht]
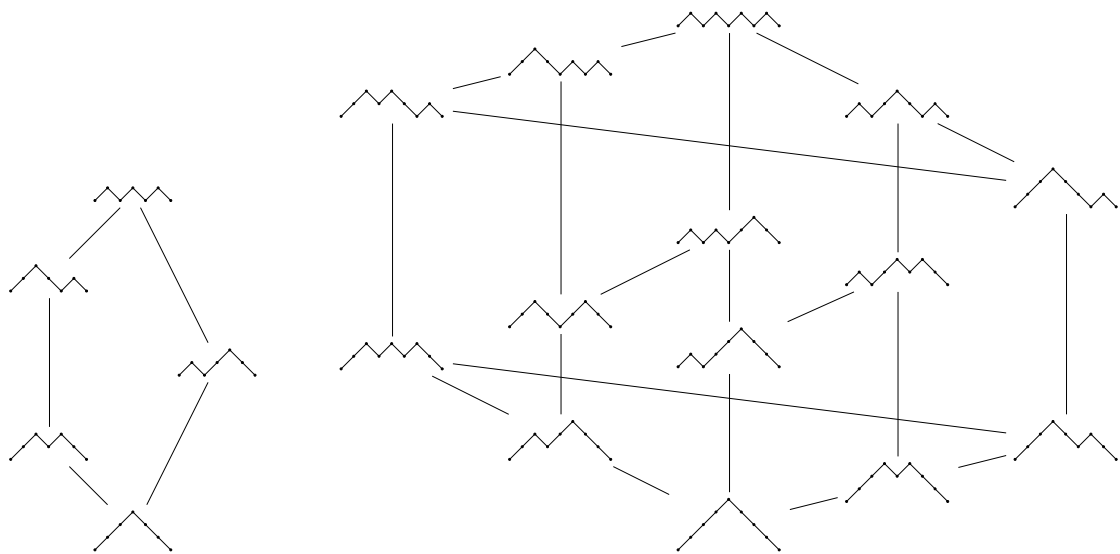

  
\hspace*{-2cm}
    \begin{tabular}{cc}
    \scalebox{0.7}{\input{figures/tamari_dyck-3}}&
    \scalebox{0.7}{\input{figures/tamari_dyck-4}}
    \end{tabular}
    
    \caption{Tamari lattices of sizes 3 and 4 on Dyck paths.}
    
    \label{fig:tamari-dyck}

\end{figure}

A binary tree is recursively defined by being either the empty tree~$(\emptyset)$ or a pair of  binary trees, respectively called \emph{left} and \emph{right} subtrees, grafted on an internal node\footnote{Note that what we call binary tree is actually a planar binary tree. All binary trees consider in this paper are planar, \emph{i.e.}, the subtrees are ordered.}. If a tree $T$ is composed of a root node $x$ with $A$ and $B$ as respectively left and right subtrees, we write $T = x(A,B)$. The number of nodes of a tree $T$ is called the size of $T$.

There are many ways to define a bijection between Dyck paths and binary trees. The one we use here is the only one which is consistent with the usual definition of the Tamari order on binary trees through the right rotation (see Definition \ref{def:tree-rotation} later). Similarly to a binary tree, a Dyck path can be seen as a recursive binary object: it is either an empty path or a word $D_1 1 D_2 0$ where $D_1$ and $D_2$ are two Dyck paths (potentially empty ones). The subpath $D_1$ corresponds to the left factor of $D$ up to the last touching point of $D$ before the end. Consequently, if $D$ is primitive, $D_1$ is empty. If both $D_1$ and $D_2$ are empty, then $D$ is the only dyck path of size 1: the word $10$. We define recursively the binary tree $T$ corresponding to $D$. If $D$ is the empty word, then $T$ is the empty tree. Otherwise, $T$ is a binary tree whose left subtree (resp. right rubstree) corresponds to $D_1$ (resp. $D_2$). See Figure \ref{fig:dyck-tree} for an example of the bijection. 

\begin{figure}[ht]
\centering
\scalebox{0.8}{
\input{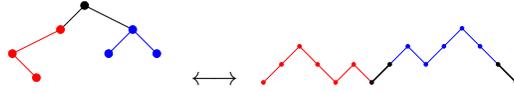}
}
    \caption{Bijection between Dyck paths and binary trees.}
    
    \label{fig:dyck-tree}
    
\end{figure}

Through this bijection, the rotation on Dyck paths can be interpreted directly in terms of binary trees through an operation called the \emph{right rotation}. This is a well known operation on binary trees, used in many different contexts, especially sorting algorithms \cite{AVL}.

\begin{Definition}
\label{def:tree-rotation}
Let $y$ be a node of $T$ with a non-empty left subtree $x$. The \emph{right rotation} of $T$ on $y$ is a local rewriting which follows Figure~\ref{fig:tree-right-rotation}, that is replacing $y( x(A,B), C)$ by $x(A,y(B,C))$ (note that $A$, $B$, or $C$ might be empty).

\begin{figure}[ht]
\centering

\input{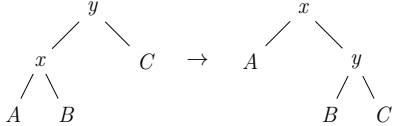}

\caption{Right rotation on a binary tree.}

\label{fig:tree-right-rotation}

\end{figure}

\end{Definition}

The right rotation is then the cover relation of the Tamari order on binary trees, as illustrated in Figure \ref{fig:tamari-trees}.

\begin{figure}[ht]
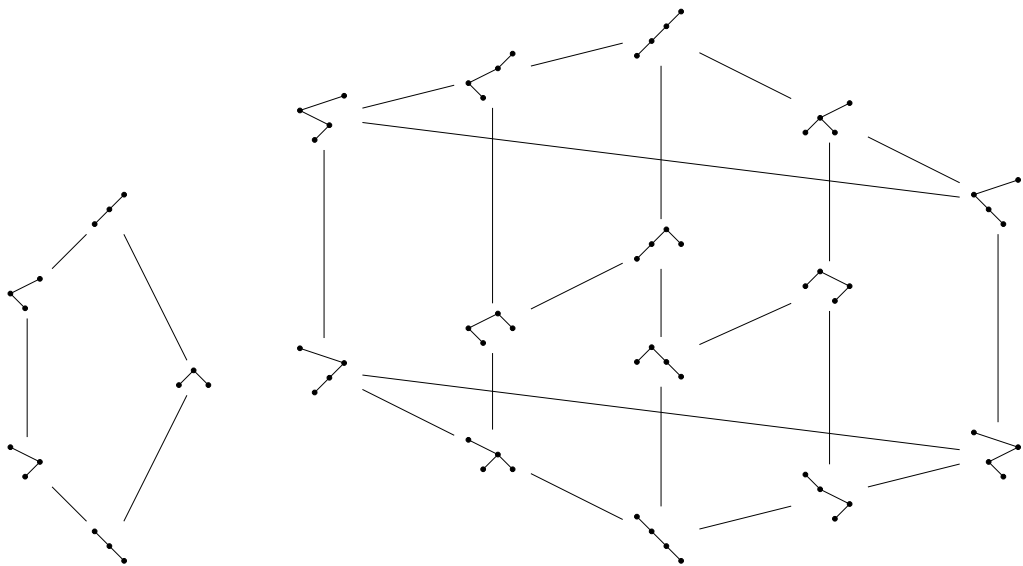

  
\hspace*{-1.5cm}
    \begin{tabular}{cc}
    \scalebox{0.7}{\input{figures/tamari_trees-3}}&
    \scalebox{0.7}{\input{figures/tamari_trees-4}}
    \end{tabular}
    
    \caption{Tamari lattice of sizes 3 and 4 on binary trees.}
    
    \label{fig:tamari-trees}

\end{figure}

\subsection{Relation with the weak order.}

An interesting property is the relation between the Tamari lattice and the weak order on permutations. Indeed, the Tamari lattice is a sublattice of the right weak order: it can be induced from it by choosing the proper subset of permutations. It is also a quotient lattice: one can define a relation on permutations given by a surjective map to binary trees. The quotient lattice of the right weak order by this relation gives the Tamari lattice. These results are originally due to Tonks \cite{Tonks}. They are also explained in \cite{PBT2}. In this paper, we use the combinatorial constructions of the latter that we recall now. They are based on a very classical object in computer science: \emph{binary search trees}.

\begin{Definition}
\label{def:binary-search-tree}
A \emph{binary search tree} is a labelled binary tree where for each node of
label $k$, any label in its left (resp. right) subtree is smaller
than or equal to (resp. larger than) $k$.
\end{Definition}

\begin{figure}[p]
\centering
\begin{fullpage}
\input{figures/tamari_quotient3}
\caption{The Tamari order of size 3 as ad 4 a quotient of the weak order.}
\label{fig:tamari-quotient}
\end{fullpage}
\end{figure}

\begin{figure}[p]
\centering
\begin{fullpage}
\scalebox{0.9}{\input{figures/tamari_quotient4}}
\end{fullpage}
\end{figure}

Note that, in general, binary search trees are labelled by any set of numbers, allowing repetitions. However, we will only consider binary search trees with distinct labels. Figure \ref{fig:bst-example} shows an example of such a tree. There is only one way to label a binary tree of size $n$ with distinct labels $1, \dots, n$ to make it a binary search tree. We call this the \emph{binary search tree labelling} of the tree and often identify the two objects. Such a labelled tree can be interpreted as a poset. The order relation, denoted  $\trprec$\footnote{We use the notation $\trprec$ for all posets of integers to differentiate with the natural order on integers. When necessary, we index the notation by the name of the  object. If $T$ is a tree (or a forest or a poset), $\trprec_T$ is the order relation given by the tree $T$.}, is defined by $x \trprec y$ if and only if $x$ is in the subtree whose root is $y$.  For example,
the tree
\begin{center}
\scalebox{0.7}{


\begin{tikzpicture}
\node (N0) at (2.500, 0.000){3};
\node (N00) at (0.500, -1.000){1};
\node (N001) at (1.500, -2.000){2};
\draw (N00) -- (N001);
\node (N01) at (3.500, -1.000){4};
\draw (N0) -- (N00);
\draw (N0) -- (N01);
\end{tikzpicture}
  }
\end{center}
is the poset where $2 \trprec 1 \trprec 3$ and $4\trprec 3$. A linear extension of this poset is a permutation of the labels of the tree
where for all labels $a$ and $b$, if $a \trprec b$ in the poset, then the number $a$ is before $b$
in the permutation. For example, $4213, 2413$, and $2143$ are the three linear extensions of the above tree. The permutation $1423$ is not because $1$ appears before $2$ whereas $2 \trprec 1$. The set of linear extensions of a given tree is called the \emph{sylvester class} of the tree: it forms an interval of the right weak order as illustrated in Figure~\ref{fig:bst-example}. Indeed, the set of linear extensions of a tree can be computed recursively using the concatenation and shuffle product which are stable operations on intervals of the weak order, for example, in Figure~\ref{fig:bst-example} the set of linear extensions is given by~$(21 \shuffle 4).3$. The sylvester classes of the binary trees of size $n$ form a partition of $\mathfrak{S}_n$. The ordering between classes is well defined as a quotient order of the weak order and it corresponds to the Tamari order on binary trees \cite{Tonks, PBT2}. This is illustrated by Figure \ref{fig:tamari-quotient}: two binary trees $T_1$ and $T_2$ are such that $T_1 \leq T_2$ if and only if there exists two linear extensions $\sigma_1$ and $\sigma_2$ of respectively $T_1$ and $T_2$ such that $\sigma_1 \leq \sigma_2$ for the right weak order.

\begin{figure}[ht]

  \begin{center}

    \input{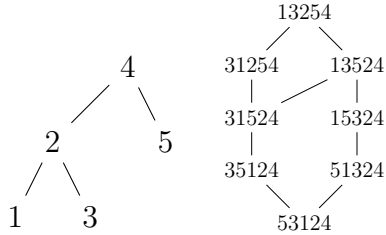}

    \caption{A binary search tree and its corresponding
      sylvester class.}
    \label{fig:bst-example}
    
  \end{center}

\end{figure}

\subsection{Construction of interval-posets}
\label{sec:interval-posets}

We now introduce more general objects: \emph{interval-posets} in bijection with the intervals of the Tamari order. Let us first recall two bijections between binary search trees and forests of planar trees. We say that a binary search tree has an \emph{increasing} relation between $a$ and $b$ if $a<b$ and $a \trprec b$, which means $a$ is in the left subtree of $b$. Symmetrically, a binary search tree has a \emph{decreasing} relation if $a<b$ and $b \trprec a$, \emph{i.e.}, $b$ is in the right subtree of $a$. From a binary search tree $T$, one can construct a poset containing only increasing (resp. decreasing) relations of $T$. These posets are actually forests, we call them the \emph{initial} and \emph{final} forest of the binary tree.
\begin{Definition}
\label{def:LSRB-RSLB}

The \emph{initial forest} of a binary search tree $T$, denoted $\inc(T)$ or simply $\inc$ when there is no ambiguity, is a forest poset on the nodes of $T$ constructed by keeping only increasing relations of $T$, \emph{i.e.}:
\begin{equation*}
a \trprec_{\inc} b \text{ if, and only if } a < b ~\text{ and }~ a \trprec_{T} b.
\end{equation*}
It is equivalent to the following construction, for all nodes $x$ of $T$:
\begin{itemize}
  \item if $y$ is the left son of $x$ in $T$, then $y$ is the first son of $x$ in $\inc(T)$,
  \item if $y$ is the right son of $x$ in $T$, then $y$ is the right brother of $x$ in $\inc(T)$, \emph{i.e.} if $x$ is the $i^{th}$ son of its parent node in $F$ then $y$ is the $(i+1)^{th}$ son (we consider that all root nodes of $F$ have a common parent node). 
\end{itemize}
In the same way, one can define the \emph{final forest} (denoted
$\dec$) by reversing the roles of the right and left son in the previous
construction or, in terms of posets:
\begin{equation*}
b \trprec_{\dec} a \text{ if, and only if } b < a ~\text{ and }~ b \trprec_{T} a.
\end{equation*}
\end{Definition}

In our definition, $\inc$ and $\dec$ are directly defined as posets. They actually correspond
to some labelled planar forests. Indeed, our constructions are the translation of some well-known bijections between unlabelled binary trees and unlabelled planar forest. The labellings we obtain on $\dec$ and $\inc$ are canonical. For example, for $\inc$ it corresponds to the recursive traversal of each root from left to right: recursively label the the subtrees from left to right then label the root. As both are bijections, the tree $T$ can be recursively retrieved from one forest among $\inc(T)$ and $\dec(T)$. On the initial forest, the root of the corresponding binary tree is the left-most (minimal) root of the forest trees. The left subtree is obtained recursively from the subtrees of the root and the right subtree from the remaining trees of the forest. The construction is symmetric for the final forest. Both bijections are illustrated in Figure~\ref{fig:inc-dec}. As a convention, initial forest are always written in blue with trees oriented from left to right and final forests are in red with trees oriented from bottom to top.

\begin{figure}[ht]
  
  \begin{center}
    \scalebox{0.8}{
    \input{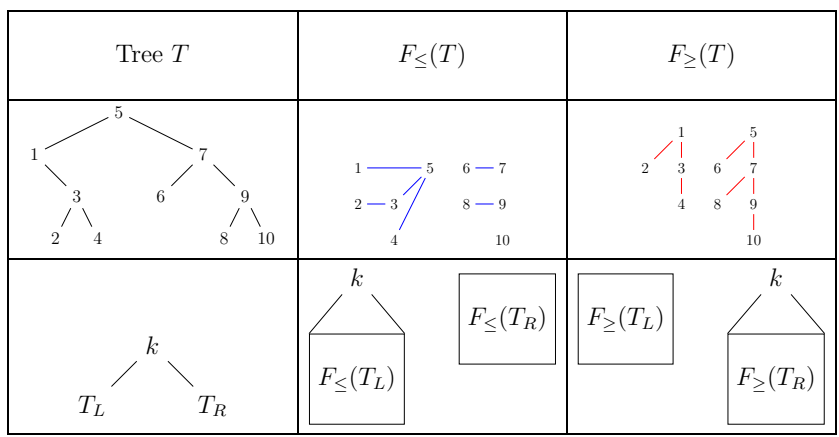}
    }
  \end{center}
  
  \caption{A tree with its corresponding initial and final forests.}
  
  \label{fig:inc-dec}
  
\end{figure}

Now, the question we want to answer is: if $P$ is a poset labelled with distinct integers $1, \dots, n$, is it the initial (or final) forest of some binary tree? The following lemma gives the the sufficient and necessary conditions for this to happen.



\begin{Lemme}
  \label{lem:carac-foret}
  Let $F$ be a labelled poset. Then $F$ is the initial forest of a binary tree $T$ if for every $a \trprec_F c$, we have $a < c$ and $b \trprec_F c$ for every $b$ such that $a < b <c$. Symmetrically, $F$ is the final forest of a binary tree $T$ if for every $c \trprec_F a$ we have $c > a$ and $b \trprec_F a$ for every $b$ such that $a < b <c$. 

\end{Lemme}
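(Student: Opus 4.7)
My plan is to prove the initial forest statement (the final forest one following by the left--right symmetry of Definition~\ref{def:LSRB-RSLB}) by constructing the binary search tree $T$ recursively from $F$. Before the recursion, I observe that the condition already forces $F$ to be a forest: if $y \trprec_F x_1$ and $y \trprec_F x_2$ with $x_1 < x_2$, then the condition applied to $y \trprec_F x_2$ at the intermediate value $x_1$ yields $x_1 \trprec_F x_2$, so the up-set of every element is totally ordered.

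For the recursion, let $r$ be the minimum-label root of $F$ (the minimum among the maximal elements of the forest) and split the remaining elements into $F_L := \{x : x \trprec_F r\}$ and $F_R := \{x \neq r : x \ntrprec_F r\}$. Recursively produce $T_L$ from $F_L$ and $T_R$ from $F_R$, and set $T := r(T_L, T_R)$. Three claims must then be verified: (i) $F_L$ and $F_R$ each inherit the condition; (ii) $T$ is a binary search tree; (iii) $\inc(T) = F$.

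Claim (i) for $F_L$ is immediate from transitivity. For $F_R$ it is the most delicate step: given $a, c \in F_R$ with $a \trprec_F c$ and $a < b < c$, the condition gives $b \trprec_F c$, and one must exclude $b \trprec_F r$. If $b \trprec_F r$ with $c \le r$, the condition applied to $b \trprec_F r$ at $c$ forces $c \trprec_F r$, contradicting $c \in F_R$; if instead $c > r$, the condition applied to $a \trprec_F c$ at $r$ forces $r \trprec_F c$, contradicting the maximality of $r$. For (ii), the labels of $F_L$ are $<r$ directly from the condition; if some $z \in F_R$ had $z < r$, let $z'$ be the root of the $F$-tree containing $z$, so $z \trprec_F z'$ with $z < r \le z'$ (since $r$ is the \emph{minimum} root), and the condition forces $r \trprec_F z'$, again contradicting the maximality of $r$. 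So all labels of $F_R$ exceed $r$, and $T$ is a BST by induction.

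Finally, claim (iii) follows by induction: by the recursive description of $\inc$ in Definition~\ref{def:LSRB-RSLB}, the children of $r$ in $\inc(T)$ are exactly the roots of $\inc(T_L) = F_L$, and $\inc(T_R) = F_R$ is juxtaposed to the right as a separate block of trees. One additionally checks that $F$ has no relation connecting $F_L$ to $F_R$: a relation $a \trprec_F b$ with $a \in F_L$ and $b \in F_R$ would have $a < r < b$ and, by the condition applied at $r$, force $r \trprec_F b$, contradicting maximality of $r$. The main obstacle is the interplay between the condition and the choice of $r$ as the \emph{minimum} root in steps (i) for $F_R$ and (ii); this is where the combinatorial content of the hypothesis really does the work, the rest being bookkeeping.
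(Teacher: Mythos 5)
Your reconstruction is correct and follows essentially the same route as the paper's own proof: show that the condition forces $F$ to be a forest whose components are label-separated, choose the extremal-label root as the root of the binary tree, split off its component from the rest, and recurse (you treat the initial-forest case and deduce the final-forest case by symmetry, while the paper does the mirror image). The only substantive difference is that the paper's proof also establishes the converse implication — that $\inc(T)$ and $\dec(T)$ of any binary search tree satisfy the stated condition — which the lemma's ``if'' wording does not strictly demand but which is invoked later (e.g.\ in the proof of Theorem~\ref{prop:tamari-interval-characterization}).
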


\begin{proof}
The final and initial forests cases are symmetric, we only give the proof of the final forest case. 

First let us proof that the final forest $F := \dec(T)$ of a binary tree $T$ satisfies the necessary condition. Let $c > a$ be such that $c \trprec_F a$. By construction of $F$, we also have $c \trprec_T a$ which means that $c$ is in the right subtree of $a$. Let $b$ be such that $a < b < c$. Only three configurations are possible: either $a$ is in the left subtree of $b$, or $a$ and $b$ are not comparable in $T$, or $b$ is in the right subtree of $a$. The first configuration never happens because it implies that $c$ is in the left subtree of $b$ which contradicts the binary search tree condition. Then, if $a$ and $b$ are not comparable, it means they have a common root $b'$ with $a < b' <b$ and $a$ is in the left subtree of $b'$. The situation is then similar to the previous one and leads to a contradiction as $c$ is also in the left subtree of $b'$. Only the third configuration is possible which makes $b \trprec_T a$ and by construction $b \trprec_F a$.

Now, let $F$ be a labelled poset satisfying the condition of the final forest. The poset $F$ is made of $r$ connected components $F_1, \dots F_r$. For each $F_i$, there is a unique minimal poset element $x_i$: $y \trprec_F x_i$ for all $y \in F_i$. We call it the root of $F_i$. Indeed, if $x$, $x'$, and $y$ are in $F_i$ with $y \trprec_F x$ and $y \trprec_F x'$ then either $x < x' < y$ and $x' \trprec_F x$ or $x' < x < y$ and $x \trprec_F x'$. As all relations of $F$ are decreasing relations, $x_i$ is also the minimal label of $F_i$: $y>x_i$ for all $y \in F_i$. Furthermore, if $x_i$ and $x_j$ are the roots of two different components $F_i$ and $F_j$ then $x_i < x_j$ implies $y < z$ for all $y \in F_i$ and $z \in F_j$. Now, following the construction described by Figure \ref{fig:inc-dec}, we set $k$ to be the maximal label among the roots $x_1, \dots, x_r$. If we cut out the root $k$ from its connected component, the remaining poset $F_L$ still satisfies the condition and all its labels are bigger than $k$. The poset $F_R$ made from the other remaining connected components also satisfy the condition and all its label are smaller than $k$. Then we can recursively construct the binary tree $T := k(T_L, T_R)$ where $T_L$ and $T_R$ are obtained from respectively $F_L$ and $F_R$. By construction, $T$ is a binary search tree and $F = \dec(T)$.
\end{proof}

We have seen that the linear extensions of a binary tree $T$ form an interval of the right weak order. The linear extensions of the initial and final forests of $T$ correspond to initial and final intervals \cite{BW} and can be interpreted in terms of the Tamari order.

\begin{Proposition}

\label{prop:minmax-interval}
The linear extensions of the initial forest $\inc(T)$ of a binary tree $T$ are the sylvester classes of 
 all trees $T' \leq T$ in the Tamari order
(initial interval) and the linear extensions of the final forest $\dec(T)$ of $T$ 
are the sylvester classes of all trees $T' \geq T$ (final
interval).

\end{Proposition}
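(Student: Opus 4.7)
The plan is to combine three facts already recalled in the excerpt: (a) by Bj\"orner--Wachs, the set of linear extensions of $\inc(T)$ forms an initial interval of the right weak order; (b) sylvester classes are intervals of the weak order that partition $\mathfrak{S}_n$; (c) the Tamari order is the quotient of the right weak order by the sylvester congruence. Both statements of the proposition are symmetric, so I give the plan only for $\inc(T)$; the $\dec(T)$ case follows by exchanging left and right, increasing and decreasing, and reversing all orders.

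First I show the inclusion $\bigcup_{T' \le T} \mathrm{sylv}(T') \subseteq \mathrm{LinExt}(\inc(T))$ by arguing at the level of a single Tamari cover $T'' \lessdot T'$. By Definition~\ref{def:tree-rotation} this cover is a right rotation $y(x(A,B),C)\longmapsto x(A,y(B,C))$ at a node, with $T'' = y(x(A,B),C)$ and $T' = x(A,y(B,C))$. A direct comparison of the two binary search tree posets gives
\begin{equation*}
\inc(T'') \;=\; \inc(T')\,\cup\,\bigl\{\,z \trprec y : z \in A \cup \{x\}\,\bigr\},
\end{equation*}
i.e.\ going down in Tamari by a cover only \emph{adds} increasing relations. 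By transitivity, $T' \le T$ implies $\inc(T) \subseteq \inc(T')$; since more relations yield fewer linear extensions, $\mathrm{sylv}(T') = \mathrm{LinExt}(T') \subseteq \mathrm{LinExt}(\inc(T')) \subseteq \mathrm{LinExt}(\inc(T))$.

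For the reverse inclusion I match the two sets as weak-order intervals with identical endpoints. By (a), $\mathrm{LinExt}(\inc(T))$ is a weak-order interval $[12\cdots n,\beta']$; the identity lies in it because $\inc(T)$ has only increasing relations. By (b) and (c), the union $\bigcup_{T' \le T}\mathrm{sylv}(T')$ is also a weak-order interval $[12\cdots n, \beta]$, where $\beta := \max\mathrm{sylv}(T)$ and the minimum is the identity (the unique element of the sylvester class of the left comb, which is the Tamari minimum). It then suffices to show $\beta = \beta'$. The inequality $\beta \le \beta'$ is immediate. For $\beta' \le \beta$, I show $\beta' \in \mathrm{LinExt}(T)$: as the maximum of the weak-order initial interval $\mathrm{LinExt}(\inc(T))$, the inversion set of $\beta'$ contains every pair $(b,a)$ with $b > a$ that is not forbidden by $\inc(T)$; but for each decreasing relation $b \trprec_T a$ of $T$ the opposite relation $a \trprec_T b$ fails (a node cannot be both an ancestor and a descendant of another), so the pair $(b,a)$ is unconstrained by $\inc(T)$ and hence inverted by $\beta'$. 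Thus $\beta'$ respects every decreasing relation of $T$ in addition to all the increasing ones, so $\beta' \in \mathrm{LinExt}(T)$ and $\beta' \le \beta$.

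The main obstacle is the last step: verifying that the weak-order maximum $\beta'$ of $\mathrm{LinExt}(\inc(T))$ already lies in $\mathrm{LinExt}(T)$. This rests on the observation that $\inc(T)$ encodes precisely the non-inversion constraints inherited from $T$, so every decreasing relation of $T$ corresponds to a pair left free by $\inc(T)$, and is therefore realized as an inversion at the top of the interval.
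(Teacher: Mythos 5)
Your proof is correct, but it takes a genuinely different route from the paper's. The paper treats the $\dec(T)$ case in one stroke via coinversions: it observes that $\ExtL(\dec(T))$ is exactly the set of permutations whose coinversion set contains all pairs $(a,b)$ with $b \trprec_{\dec} a$, then checks that the postfix reading $\alpha_T$ of $T$ has precisely this coinversion set, so that $\ExtL(\dec(T))=[\alpha_T,\omega]$, and finishes with the quotient property of the sylvester congruence. You instead prove two inclusions. For $\bigcup_{T'\le T}\ExtL(T')\subseteq\ExtL(\inc(T))$ you do a local computation on one right rotation showing that descending a Tamari cover only \emph{adds} increasing relations, i.e.\ $T'\le T$ implies $\inc(T)\subseteq\inc(T')$; this computation is correct (and is essentially the unproved observation the paper records around Figure~\ref{fig:interval-poset-rotation}), and it is a nice self-contained structural fact the paper never establishes explicitly. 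For the reverse inclusion you match the maxima of two weak-order intervals, which requires importing both the Bj\"orner--Wachs fact that $\ExtL(\inc(T))$ is an initial interval of the weak order and the congruence fact that the preimage of a principal Tamari down-set is the weak-order interval from the identity up to the maximum of the sylvester class of $T$. What the paper's route buys is economy and self-containedness: it never presupposes the interval property, it derives it. What yours buys is the explicit rotation lemma and a proof that isolates exactly where the quotient structure is used.

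One step of yours needs a sentence more than you give it: the claim that the top element $\beta'$ of $\ExtL(\inc(T))$ has as coinversions \emph{all} pairs $(a,b)$ with $a<b$ not forced the other way by $\inc(T)$. This does not follow from maximality alone; you must first check that every such unconstrained pair is realized as a coinversion by \emph{some} linear extension of $\inc(T)$. That is true because $\inc(T)$ carries only increasing relations, so an unconstrained pair $a<b$ is incomparable in the poset and a standard linear-extension argument produces an extension with $b$ before $a$; maximality of $\beta'$ in the weak-order interval then transfers this coinversion to $\beta'$. With that sentence added, your argument that $\beta'$ respects all decreasing relations of $T$, hence lies in $\ExtL(T)$, goes through.
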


\begin{proof}
We only give the proof for $\dec(T)$. By symmetry of the right weak order and the Tamari order, it also proves the result for $\inc(T)$. Let $\alpha_T$ be the minimal element of the sylvester class of $T$. We want to prove that the linear extensions of $\dec(T)$ are exactly the interval $\left[ \alpha_T, \omega \right]$ where $\omega$ is the maximal element of the right weak order. Since the Tamari order is a quotient of the right weak order, the Proposition is entirely proved by this result.

Let us recall that a coinversion $(a,b)$ of a permutation $\sigma$ is couple of numbers such that $a < b$ and $b$ appears before $a$ in $\sigma$. As an example, $(1,4)$ is a coinversion of $2431$ as well as $(1,2)$, $(3,4)$ and $(1,3)$. We have that $\mu \leq \sigma$ in the right weak order if and only if the coinversions of $\mu$ are contained in the coinversions of $\sigma$. For the previous example, the permutation $\mu = 2314$ is smaller than $\sigma$ because its coinversions $(1,2)$ and $(1,3)$ are also coinversions of $\sigma$.

The linear extensions of $\dec(T)$ are the permutations containing all coinversions $(a,b)$ where $b \trprec_{\dec} a$. It is clear by construction that a linear extension of $\dec(T)$ contains these coinversions. It is also a sufficient condition. Indeed let $\sigma$ be a permutation that is not a linear extension of $\dec(T)$. Then there is $(a,b)$ with $b \trprec_{\dec} a$ and $a$ before $b$ in $\sigma$. The permutation $\sigma$ does not contain the coinversion $(a,b)$. 

Finally, the permutation $\alpha_T$ contains exactly the coinversions given by the $\dec(T)$ relations (it does not contain other coinversions). Indeed, it is known \cite{PBT2} that $\alpha_T$ is read on the binary search tree by a recursive printing: left subtree, right subtree, root. Let $b > a$ be such that $\dec(T)$ does not contain the relation $b \trprec_{\dec} a$. It means $b$ is not on the right subtree of $a$. There are only two possible configurations: either $a$ is on the left subtree of $b$, either they have a common root $b'$ and $a$ is on the left subtree of $b'$ and $b$ on the right subtree of $b'$. In both cases, $a$ is read before $b$ in $\alpha_T$ and then $\alpha_T$ does not contain the coinversion $(a,b)$. 

To conclude, the linear extensions of $\dec(T)$ are the permutations whose coinversions contain the coinversions of $\alpha_T$. In other words, they are the permutations greater than or equal to $\alpha_T$.
\end{proof}

If two trees $T$ and $T'$ are such that $T \leq T'$, then $\dec(T)$
and $\inc(T')$ share some linear extensions (by
Proposition~\ref{prop:minmax-interval}). More precisely, we denote by $\ExtL(F)$ the set of linear extensions of a poset $F$. Then we have $\ExtL(\dec(T)) \cap \ExtL(\inc(T')) = [\alpha_T, \omega_{T'}]$ where $\alpha_T$ (resp. $\omega_{T'}$) is the minimal permutation (resp. maximal permutation) of the sylvester class of $T$ (resp. $T'$). This set corresponds exactly to the linear extensions of the trees of the interval $[T,T']$ in the Tamari order. It is then natural to construct a poset that would contain relations of both $\dec(T)$ and $\inc(T')$. That is what we call an \emph{interval-poset}. We give a first example in Figure \ref{fig:forest-intersection}. Note that unlike $\dec(T)$ and $\inc(T')$, the interval-poset formed by the reunion of their relations is not necessary a forest itself. The characterisation of interval-posets follows naturally from Lemma~\ref{lem:carac-foret}.

\begin{Definition}
  \label{def:interval-poset-definition}
  An \emph{interval-poset} $P$ is a poset labelled with distinct integers $\lbrace1, \dots, n \rbrace$ such that the following
  conditions hold:
  \begin{itemize}
    \item $a \trprec_{P} c$ and $a < c$ implies that for all $a < b < c$, we have
      $b \trprec_{P} c$, 
    \item $c \trprec_{P} a$ and $a < c$ implies that for all $a < b < c$,
      we have $b \trprec_{P} a$.
  \end{itemize}
\end{Definition}
As an example, poset

\begin{center}
\scalebox{0.6}{
\begin{tikzpicture}
\node(T1) at (0,0){1};
\node(T2) at (-1,1){2};
\node(T3) at (1,1){3};

\draw(T1) -- (T2);
\draw(T1) -- (T3);
\end{tikzpicture}
}
\end{center}
is not an interval-poset. Indeed, we have $1 \trprec 3$ without $2 \trprec 3$ so it does not satisfy the second condition of the definition. An example of an interval-poset is given in Figure \ref{fig:forest-intersection}. By convention, even though an interval-poset is by definition a poset we do not represent it by its Hasse diagram. For clarity, we draw the union of the Hasse diagrams formed respectively by its increasing relations (in blue) and decreasing relations (in red).

\begin{figure}[ht]
  \begin{center}
    
    \input{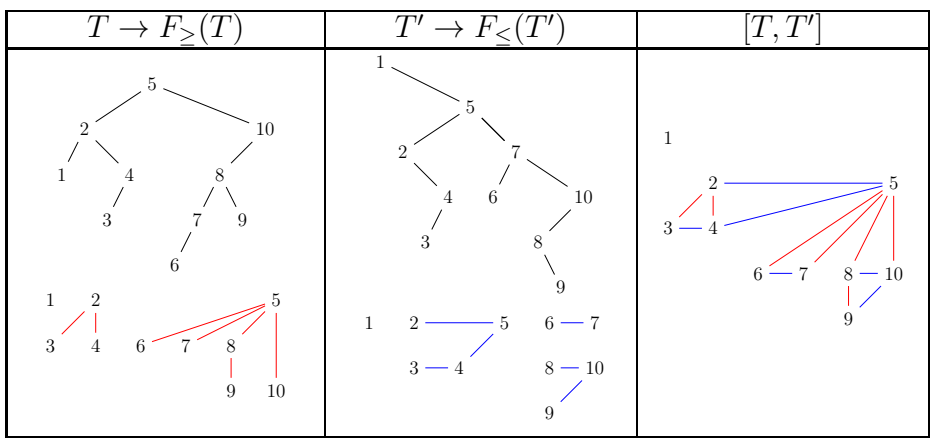}
    
  \end{center}
  
  \caption{Two trees $T$ and $T'$ with $T < T'$, their final and
    initial forest and the interval-poset $[T, T']$. This Tamari
    interval is shown in Figure
    \ref{fig:interval-forest-intersection}.}
  
  \label{fig:forest-intersection}
\end{figure}

\begin{figure}[p]
  \begin{center}
  \begin{fullpage}
    \input{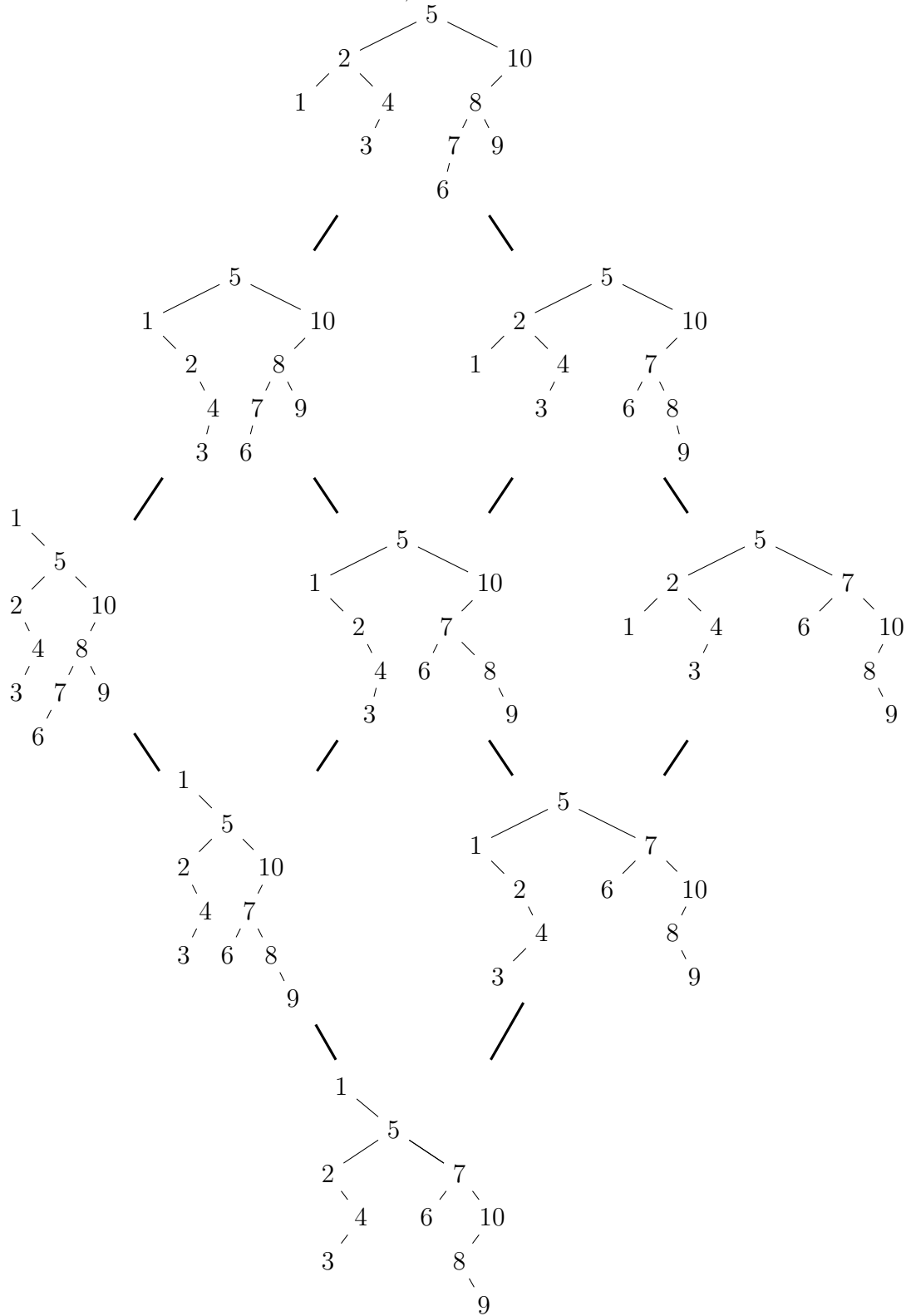}
\end{fullpage}
  \end{center}
  
  \caption{The interval between the trees $T$ and $T'$ of Figure~\ref{fig:forest-intersection}.}

  \label{fig:interval-forest-intersection}
\end{figure}

\begin{Theoreme}
  \label{prop:tamari-interval-characterization}
Interval-posets are in bijection with intervals of the Tamari order.

More precisely, to each interval-poset corresponds a couple of binary trees $T_1 \leq T_2$ such that the linear extensions of the interval-poset are exactly the linear extensions of the binary trees $T' \in [T_1, T_2]$.

And conversely, interval-posets are the only labelled posets whose linear extensions are intervals of the right weak order $[\alpha_{T_1}, \omega_{T_2}]$ with $\alpha_{T_1}$ (resp. $\omega_{T_2}$) the minimal permutation (resp. maximal permutation) of a sylvester class.
\end{Theoreme}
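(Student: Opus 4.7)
The plan is to construct explicit mutually inverse maps between Tamari intervals and interval-posets, then read off the uniqueness statement from the fact that a finite poset is determined by its set of linear extensions.

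Going forward, given $T_1 \leq T_2$ in the Tamari order, I would define $P(T_1, T_2)$ as the transitive closure of the union of the relations of $\dec(T_1)$ and $\inc(T_2)$ on $\{1, \dots, n\}$. The first thing to verify is that this is genuinely a poset and not a cyclic preorder. For this I would appeal to the weak-order quotient characterization of Tamari comparability recalled just before the statement: $T_1 \leq T_2$ yields permutations $\sigma_1 \leq \sigma_2$ in their respective sylvester classes, so $\alpha_{T_1} \leq \sigma_1 \leq \sigma_2 \leq \omega_{T_2}$, and by Proposition~\ref{prop:minmax-interval} the intersection $\ExtL(\dec(T_1)) \cap \ExtL(\inc(T_2)) = [\alpha_{T_1}, \omega] \cap [\mathrm{id}, \omega_{T_2}] = [\alpha_{T_1}, \omega_{T_2}]$ is nonempty. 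Any common linear extension of $\dec(T_1)$ and $\inc(T_2)$ witnesses the absence of cycles in the union. The local conditions of Definition~\ref{def:interval-poset-definition} are then inherited directly from $\dec(T_1)$ and $\inc(T_2)$, which satisfy them by Lemma~\ref{lem:carac-foret}.

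Next I would identify the linear extensions of $P(T_1, T_2)$. A permutation extends $P(T_1, T_2)$ iff it extends both forests, so by Proposition~\ref{prop:minmax-interval} the set of linear extensions is exactly the weak-order interval $[\alpha_{T_1}, \omega_{T_2}]$. Since the Tamari order is the sylvester quotient of the weak order, this interval decomposes as the disjoint union of the sylvester classes of the trees $T' \in [T_1, T_2]$, which gives the required match of linear extensions.

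For the inverse map, given an interval-poset $P$ I would split its relations into the decreasing subposet $F_d$ and the increasing subposet $F_i$. The two local conditions of Definition~\ref{def:interval-poset-definition} are precisely the hypotheses of Lemma~\ref{lem:carac-foret}, so $F_d = \dec(T_1)$ and $F_i = \inc(T_2)$ for unique binary trees $T_1, T_2$. Any linear extension of $P$ lies in $\ExtL(\dec(T_1)) \cap \ExtL(\inc(T_2)) = [\alpha_{T_1}, \omega_{T_2}]$; since a finite poset always admits a linear extension, $\alpha_{T_1} \leq \omega_{T_2}$, and the weak-order quotient forces $T_1 \leq T_2$ in Tamari. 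By construction, the maps $[T_1, T_2] \mapsto P(T_1, T_2)$ and $P \mapsto (T_1, T_2)$ are mutually inverse, which gives the bijection and the linear-extensions claim simultaneously. The final uniqueness statement then follows from the standard fact that a finite poset is recovered from its set of linear extensions via $a \trprec b$ iff $a$ precedes $b$ in every linear extension: any labelled poset whose linear extensions form $[\alpha_{T_1}, \omega_{T_2}]$ must coincide with $P(T_1, T_2)$, hence be an interval-poset.

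The main obstacle is the very first verification, namely that $\dec(T_1) \cup \inc(T_2)$ really is a poset; the cleanest route is the one above, via nonemptiness of the common set of linear extensions and hence via the quotient description of the Tamari order. Once antisymmetry is in hand, every other step is either a direct application of Lemma~\ref{lem:carac-foret} and Proposition~\ref{prop:minmax-interval} or the standard linear-extension characterization of finite posets.
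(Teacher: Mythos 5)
Your proof is correct and follows essentially the same route as the paper's: acyclicity of $\dec(T_1)\cup\inc(T_2)$ via the existence of a common linear extension, Lemma~\ref{lem:carac-foret} to recognize the two halves of an interval-poset as the forests of trees $T_1$ and $T_2$, and Proposition~\ref{prop:minmax-interval} to identify the linear extensions with $[\alpha_{T_1},\omega_{T_2}]$ and to force $T_1\le T'\le T_2$. The one place you go beyond the paper's own proof is the final claim of the theorem (that interval-posets are the \emph{only} labelled posets with such sets of linear extensions), which you settle correctly via the standard fact that a finite poset is the intersection of its linear extensions --- a point the paper leaves implicit.
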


\begin{proof}
Let $[T_1, T_2]$ be an interval of the Tamari order. We build a poset containing all the relations from both $\dec(T_1)$ and $\inc(T_2)$. Note that relations from $\dec(T_1)$ and $\inc(T_2)$ together can never produce a cycle. Indeed any linear extension of $T_1$ for example satisfies both by Proposition \ref{prop:minmax-interval}. It is clear by Lemma \ref{lem:carac-foret} that the resulting poset is an interval-poset.

Conversely, from an interval-poset $P$, we build $\dec$ and $\inc$ by keeping respectively decreasing and increasing relations of $P$. By Lemma \ref{lem:carac-foret}, the two resulting posets are respectively a final forest of a binary tree $T_1$ and an initial forest of a binary tree $T_2$. Let $\sigma$ be a linear extension of $P$ whose sylvester class corresponds to a binary tree $T'$. By definition, the permutation $\sigma$ is also a linear extension of $\dec$ and $\inc$ and we have by Proposition \ref{prop:minmax-interval} that $T_1 \leq T' \leq T_2$. As $T_1 \leq T_2$, the interval $[T_1,T_2]$ is well defined. 
\end{proof}

Many operations on intervals can be easily adapted on interval-posets, all with trivial proofs.

\begin{Proposition}
\label{prop:comb-prop-trivial}
\begin{enumerate}[label=(\roman{*}), ref=(\roman{*})]
\item The intersection between two intervals $I_1$ and $I_2$ is given by the union of their relations $I_3$. If $I_3$ is a valid poset, \emph{i.e.}, there is contradictions between the relations of $I_1$ and $I_2$, then $I_3$ is an interval-poset, otherwise, the intersection is empty. 
\label{prop:comb-prop-intersect}
\item An interval $I_1 := \left[ T_1, T_1' \right] $ is contained in an interval $I_2 := \left[ T_2, T_2' \right]$, \emph{i.e.}, $T_1 \geq T_2$ and $T_1' \leq T_2'$, if and only if all relations of the interval-poset $I_1$ are satisfied by the interval-poset $I_2$. 
\label{prop:comb-prop-inclusion}
\item If $I_1 := \left[ T_1, T_1' \right]$ is an interval, then $I_2 = \left[ T_2, T_1' \right]$, $T_2 \geq T_1$, if and only if all relations of the interval-poset $I_1$ are satisfied by $I_2$ and all new relations of $I_2$ are decreasing. Symmetrically, $I_3 = \left[ T_1, T_3 \right]$, $T_3 \leq T_1'$, if and only if all relations of the interval-poset $I_1$ are satisfied by $I_3$ and all new relations of $I_3$ are increasing. 
\label{prop:comb-prop-minmax-inclusion}
\end{enumerate}
\end{Proposition}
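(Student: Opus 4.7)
Plan: I would deduce all three parts from Theorem \ref{prop:tamari-interval-characterization} together with a single auxiliary observation extracted from Proposition \ref{prop:minmax-interval}: for binary trees $S$ and $T$, one has $S \leq T$ in the Tamari order if and only if $\dec(S) \subseteq \dec(T)$ (equivalently, $\inc(T) \subseteq \inc(S)$) as sets of relations. Indeed, Proposition \ref{prop:minmax-interval} identifies $\ExtL(\dec(S))$ with the union of the sylvester classes of the trees $\geq S$, so $S \leq T$ is equivalent to $\ExtL(\dec(T)) \subseteq \ExtL(\dec(S))$, and the general fact that a poset labelled by distinct integers is determined by the set of pairs $(a,b)$ with $a \trprec b$ then converts this into the required inclusion of relations. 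The symmetric statement for $\inc$ is analogous.

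For \ref{prop:comb-prop-intersect}, Theorem \ref{prop:tamari-interval-characterization} implies that $\ExtL(I_j)$ is exactly the set of permutations satisfying every relation of the interval-poset $I_j$, so a permutation lies in $\ExtL(I_1) \cap \ExtL(I_2)$ iff it satisfies every relation of $I_1 \cup I_2$. If this union is acyclic, I would verify that it still satisfies the two conditions of Definition \ref{def:interval-poset-definition}: each condition is of the form ``the presence of one relation forces the presence of a second''; since the forcing relation sits in some $I_j$, which is itself an interval-poset, the forced relation also lies in that same $I_j$, and hence in the union. Then by Theorem \ref{prop:tamari-interval-characterization} the union corresponds to a Tamari interval whose trees are precisely those common to $I_1$ and $I_2$. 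If on the other hand the union contains a cycle, it admits no linear extension, so no sylvester class lies in $\ExtL(I_1) \cap \ExtL(I_2)$ and $I_1 \cap I_2$ must be empty.

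For \ref{prop:comb-prop-inclusion}, write $I_j = \dec(T_j) \cup \inc(T_j')$. By the auxiliary observation, the conditions $T_2 \leq T_1$ and $T_1' \leq T_2'$ are equivalent to $\dec(T_2) \subseteq \dec(T_1)$ and $\inc(T_2') \subseteq \inc(T_1')$, which together are equivalent to a single inclusion of relation sets between the two interval-posets (the direction being the one forced by the fact that larger Tamari intervals correspond to more linear extensions and therefore fewer relations). Part \ref{prop:comb-prop-minmax-inclusion} is the specialization with either $T_1' = T_2'$ or $T_1 = T_2$: fixing one endpoint forces equality of either $\inc$ or $\dec$, so the new relations can only be of the other type; conversely, given a one-sided extension of relations by decreasing (resp. increasing) relations only, Lemma \ref{lem:carac-foret} reconstructs a unique final (resp. initial) forest and hence the new endpoint tree.

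The main obstacle is rigorously handling the cycle case of \ref{prop:comb-prop-intersect}: one must invoke the general fact that a finite labelled poset always admits a linear extension, so the only way $\ExtL(I_1) \cap \ExtL(I_2) = \emptyset$ is for the combined relations $I_1 \cup I_2$ to fail to define a poset, and Theorem \ref{prop:tamari-interval-characterization} then gives the empty Tamari intersection. Beyond this, the arguments are routine bookkeeping once the bridge between Tamari order and inclusion of $\dec$/$\inc$ relations is in hand, which is why the authors describe the proofs as trivial.
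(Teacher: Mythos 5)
The paper offers no proof of this proposition---the authors merely remark that all the proofs are trivial---so there is no argument of record to compare against; your write-up in fact supplies the missing details, and it does so by what is clearly the intended route: extracting from Proposition \ref{prop:minmax-interval} the dictionary $S \leq T \Leftrightarrow \dec(S) \subseteq \dec(T) \Leftrightarrow \inc(T) \subseteq \inc(S)$ and then reading all three parts off Theorem \ref{prop:tamari-interval-characterization}. Your parenthetical about the direction of the inclusion in part \ref{prop:comb-prop-inclusion} is the mathematically correct one (a smaller Tamari interval has fewer linear extensions, hence \emph{more} relations), and it is the only reading consistent with part \ref{prop:comb-prop-minmax-inclusion}; the literal wording of part \ref{prop:comb-prop-inclusion} in the paper appears to have the roles of $I_1$ and $I_2$ interchanged, just as part \ref{prop:comb-prop-intersect} is missing a negation. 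You were right to resolve both in favour of the mathematics.

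The one step you should tighten is in part \ref{prop:comb-prop-intersect}: the object $I_3$ must be the \emph{transitive closure} of the union of the relations of $I_1$ and $I_2$, not the bare union, and your verification of Definition \ref{def:interval-poset-definition} only treats relations that already lie in one of the two summands. A composite relation such as $a \trprec_{I_1} b \trprec_{I_2} c$ also has to be checked: a short case analysis on the position of $b$ relative to $a$ and $c$, applying the interval-poset condition inside whichever $I_j$ supplies each link, shows that the forced intermediate relations again appear in the closure, but this is not covered by the sentence you wrote. Everything else---the appeal to the existence of linear extensions of finite posets to handle the empty intersection, and the reconstruction of the new endpoint in part \ref{prop:comb-prop-minmax-inclusion} via Lemma \ref{lem:carac-foret}---is sound.
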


The interval-poset consisting of the set of points $1, \dots, n$ without any relations corresponds to the whole Tamari lattice and the linear extensions are all permutations of size $n$. More generally, let $I$ be the interval-poset of an interval $[T,T']$. If $\tilde{T}$ is the binary tree $T$ where a right rotation has been applied on a couple of nodes $x < y$, then $\tilde{I}$ is the interval-poset $I$ where the decreasing relation $y \trprec x$ has been added. Symmetrically, applying a left rotation on a couple of nodes $y > x$ on $T'$ corresponds to adding the increasing relation $x \trprec y$. This is illustrated in Figure \ref{fig:interval-poset-rotation}.

\begin{figure}[ht]
\centering
\scalebox{0.8}{
\input{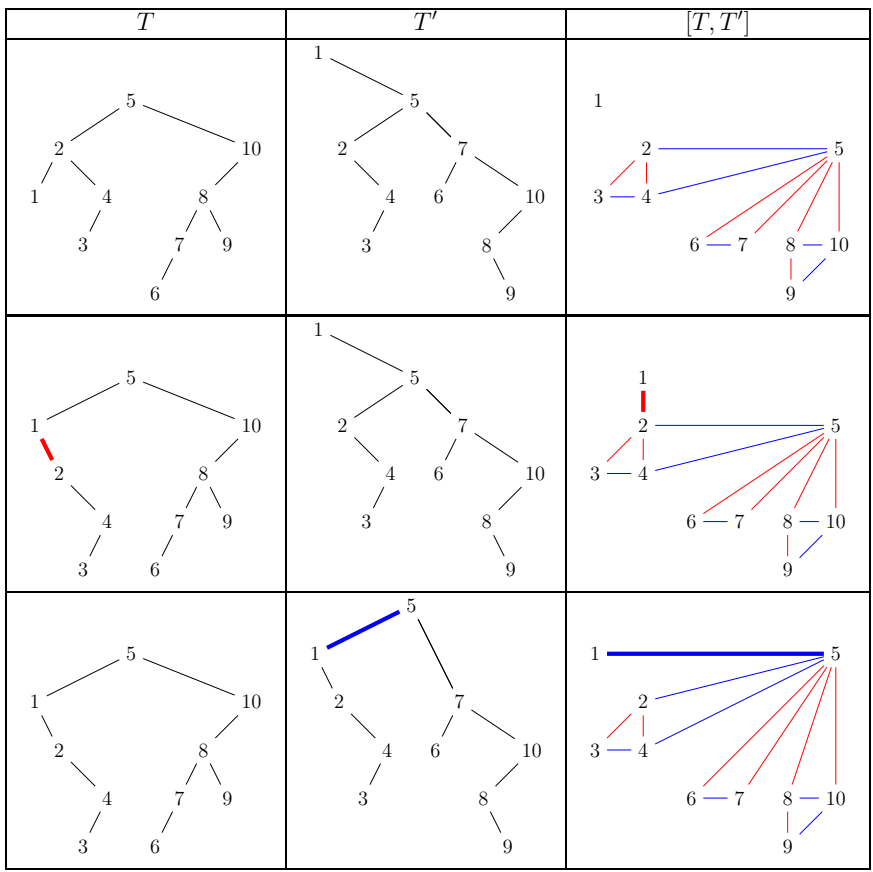}
}
\caption[Adding relations on interval-posets]{Adding relations on interval-posets: adding a decreasing relation to the poset makes a right rotation on $T$ and adding an increasing relation makes a left rotation on $T'$.}
\label{fig:interval-poset-rotation}
\end{figure}

As posets satisfying simple properties, interval-posets are easy to implement in any computer algebra system. Thereby they allow for computer exploration on the combinatorics of Tamari intervals. In this purpose, we decided to integrate them into the mathematical software Sage\cite{SAGE_WEBSITE}. They were developed in \cite{SAGE_IntervalPosets} and are available since {\tt Sage~6.3}. Especially, all basic operations such as the ones of Proposition~\ref{prop:comb-prop-trivial} are implemented. The details of the different functionalities are given in the documentation \cite{SAGE_IntervalPosets}, we give some basic examples in Appendix~\ref{app:sub-sec:basic}.

\section{Tamari polynomials}
\label{sec:tamari-polynomials}

\subsection{Composition of interval-posets}
\label{sec:bilinear-form}

Let $\phi(y)$ be the generating function of Tamari intervals,
\begin{equation}
\phi(y) := \sum_{n \geq 0} I_n y^n
\end{equation}
where $I_n$ is the number of intervals of trees of size $n$, equivalently this is the number of interval-posets with $n$ vertices. The first values of $I_n$ are given in \cite{OEISIntervalles}
\begin{equation}
\label{eq:generating-function}
\phi(y) = 1 + y + 3 y^2 + 13 y^3 + 68 y^4 + \cdots~.
\end{equation}

In \cite{Chap}, Chapoton gives a refined version of $\phi$,
\begin{equation}
\Phi(x,y) := \sum_{n,m \geq 0}I_{n,m} x^m y^n
\end{equation}
where $I_{n,m}$ is the number of intervals $[T_1, T_2]$ of trees of size $n$ such that $T_1$ has exactly $m$ nodes on its leftmost branch. This gives
\begin{equation}
\label{eq:refined-generating-function}
\Phi(x,y) = 1 + xy + (x + 2x^2)y^2 + (3x + 5x^2 + 5x^3)y^3 + \cdots~.
\end{equation}

The statistic of the number of nodes on the leftmost branch is well known \cite{FindStatLeftBranch}. On Dyck paths, it corresponds to the number of touch points: the number of contacts between the path and the bottom line \cite{FindStatTouchPoints}. It can also be read on $\dec(T)$: it is the number of connected components.

\begin{Definition}
\label{def:stat}
Let $[T_1, T_2]$ be an interval and $I$ its interval-poset, we denote by
\begin{enumerate}
\item $\Isize(I)$ the number of vertices of $I$, \emph{i.e.}, the size of the trees $T_1$ and $T_2$.
\item $\Itrees(I)$ the number of connected components (or trees) of $\dec(I)$, the poset obtained by keeping only decreasing relations of $I$. 
\end{enumerate}
We then define $\PI(I) := x^{\Itrees(I)}y^{\Isize(I)}$ which we extend by linearity to linear combinations of interval-posets.
\end{Definition}

The refined generating functions $\Phi$ can be expressed as
\begin{equation}
\Phi(x,y) = \sum_{I} \PI(I)
\end{equation}
summed on all interval-posets. We prove the following theorem.

\begin{Theoreme}
\label{thm:functional-equation}
The generating function $\Phi(x,y)$ satisfies the functional equation
\begin{equation}
\label{eq:functional-equation}
\Phi(x,y) = \B(\Phi,\Phi) + 1
\end{equation}
where
\begin{equation}
\label{eq:bilinear}
\B(f,g) := xy f(x,y) \frac{x g(x,y) - g(1, y)}{x - 1}.
\end{equation}
\end{Theoreme}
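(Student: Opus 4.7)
The plan is to prove \eqref{eq:functional-equation} by exhibiting a weight-preserving bijection between non-empty interval-posets $K$ and triples $(L,R,i)$ with $L,R$ interval-posets and $0\le i\le\Itrees(R)$. Expanding $\B$ with $f=g=\Phi$, the factor $\frac{x\Phi(x,y)-\Phi(1,y)}{x-1}$ rewrites as $\sum_{R}\sum_{i=0}^{\Itrees(R)} x^{i}y^{\Isize(R)}$, so $\B(\Phi,\Phi)$ enumerates such triples with weight $x^{1+\Itrees(L)+i}y^{1+\Isize(L)+\Isize(R)}$; matching this weight to $\PI(K)$ for the $K$ attached to $(L,R,i)$ is exactly the statement of the theorem.

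First I will describe the forward map. Given $(L,R,i)$ with $|L|=p$ and $|R|=q$, I relabel $L$ as an interval-poset on $\{1,\dots,p\}$, introduce a new vertex labeled $p+1$, and relabel $R$ on $\{p+2,\dots,p+q+1\}$. I include all relations inherited from $L$ and from $R$, add $a\trprec p+1$ for every $a\in L$, and add $c\trprec p+1$ for every $c$ in the first $\Itrees(R)-i$ trees of $\dec(R)$. By Lemma~\ref{lem:carac-foret} the roots of $\dec(R)$ partition $\{p+2,\dots,p+q+1\}$ into consecutive intervals, so that prefix is a contiguous block $\{p+2,\dots,m\}$; this makes the conditions of Definition~\ref{def:interval-poset-definition} straightforward to verify case by case, with no cross-relation between $L$ and $R$ needed. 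By construction, $\dec(K)$ splits into the $\Itrees(L)$ trees of $\dec(L)$, one new tree rooted at $p+1$ that absorbs $\{p+2,\dots,m\}$, and the $i$ remaining trees of $\dec(R)$; hence $\Isize(K)=1+|L|+|R|$ and $\Itrees(K)=1+\Itrees(L)+i$.

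For the inverse, given a non-empty $K$ of size $n$ I let $s_{1}$ be the smallest root of $\inc(K)$, designate $s_{1}$ as the new vertex, and take $L:=K|_{\{1,\dots,s_{1}-1\}}$, $R:=K|_{\{s_{1}+1,\dots,n\}}$, and $i:=\Itrees(K)-1-\Itrees(L)$. The hard part will be the intrinsic lemma that $s_{1}$ is also a root of $\dec(K)$, without which the $\dec$-forest does not split cleanly. My argument: by the partition property of initial forests (Lemma~\ref{lem:carac-foret}), the $\inc$-subtree of $s_{1}$ equals $\{1,\dots,s_{1}\}$, so $a\trprec s_{1}$ in $K$ for every $a<s_{1}$; if some $s_{1}'<s_{1}$ satisfied $s_{1}\trprec s_{1}'$ in $\dec(K)$, then $s_{1}'\trprec s_{1}$ and $s_{1}\trprec s_{1}'$ would together violate antisymmetry. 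Once this is in hand, the $\dec$-subtree of $s_{1}$ is $\{s_{1},\dots,m\}$ for some $m$ and meets $R$ in a contiguous prefix of its $\dec$-trees, so $i\in\{0,\dots,\Itrees(R)\}$ and the forward construction applied to $(L,R,i)$ returns $K$. Summing $\PI$ over both sides of the bijection then yields $\Phi(x,y)-1=\B(\Phi,\Phi)$.
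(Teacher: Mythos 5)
Your proof is correct and follows essentially the same route as the paper: your triple $(L,R,i)$ is exactly the paper's composition $\BB(I_1,I_2)=\sum_i P_i$ packaged together with its unique-decomposability statement (Propositions \ref{prop:combinatorial-equivalence-composition} and \ref{prop:unicity-composition}), and your splitting vertex $s_1$, the smallest root of $\inc(K)$, coincides with the paper's $k$, the largest label preceded by all smaller labels. The only differences are cosmetic: you present composition and uniqueness as a single weight-preserving bijection, and you index $i$ by the number of unabsorbed trees of $\dec(R)$ rather than by the number of added decreasing relations.
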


This theorem was already proven by Chapoton in \cite{Chap}\footnote{Our equation is slightly different from the one of \cite[formula (6)]{Chap}. Indeed, the definition of the degree of $x$ differs by one and in our case $\Phi$ also counts the interval of size $0$.}. In Section~\ref{sec:enumeration}, we give a new proof of the theorem based on a combinatorial interpretation of the operator $\B$. Let us define now what we call the \emph{composition} of interval-posets.

\begin{Definition}
\label{def:composition}
Let $I_1$ and $I_2$ be two interval-posets of size respectively $k_1$ and $k_2$. Then $ \BB(I_1, I_2)$ is the formal sum of all interval-posets of size $k_1 + k_2 + 1$ where, 
\begin{enumerate}[label=(\roman{*}), ref=(\roman{*})]
\item the relations between vertices $1, \dots, k_1$ are exactly the ones from $I_1$,
\label{def:composition:cond:I1}
\item the relations between $k_1 + 2, \dots, k_1 + k_2 + 1$ are
  exactly the ones from $I_2$ shifted by $k_1 + 1$,
\label{def:composition:cond:I2}
\item we have $i \trprec k_1 +1$ for all $i \leq k_1$,
\label{def:composition:cond:incr}
\item there is no relation $k_1+1 \trprec j$ for all $j>k_1+1$.
\label{def:composition:cond:decr}
\end{enumerate}
We call this operation the composition of intervals and extend it by bilinearity to all linear sums of intervals.
\end{Definition}

\begin{figure}[ht]

  \begin{center}

    \input{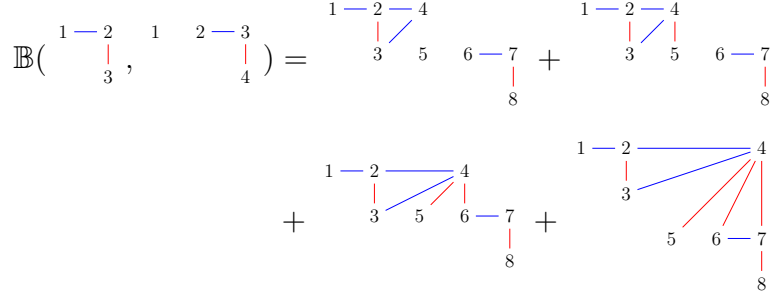}

    \caption{Composition of interval-posets: the four terms of the
      sum are obtained by adding respectively no, 1, 2, and 3 decreasing
      relations between the second poset and the vertex $4$.}

    \label{fig:composition}
    
  \end{center}
  
\end{figure}

The sum we obtain by composing interval-posets actually corresponds to all possible ways of adding decreasing relations between the second poset and the new vertex $k_1 + 1$, as seen in Figure \ref{fig:composition}. Especially, there is no relations between vertices $1, \dots, k_1$ and $k_1+2, \dots, k_1+k_2+1$. Indeed, condition \ref{def:composition:cond:incr} makes it impossible to have any relation $j \trprec i$ with $i<k_1+1<j$ as this would imply by Definition \ref{def:interval-poset-definition} that $k_1+1 \trprec i$. And condition \ref{def:composition:cond:decr} makes it impossible to have $i \trprec j$ as this would imply $k_1 + 1 \trprec j$.

The number of elements in the sum is given by $\Itrees(I_2) + 1$. Indeed, if $x_1 < x_2 < \dots < x_m$ are the tree roots of $\dec(I_2)$, a decreasing relation $x_i \trprec k_1 +1$ can be added only if all relations $x_j \trprec k_1+1$ for $j<i$ have already been added. We then obtain
\begin{equation}
\label{eq:simple-composition}
\BB(I_1,I_2) = \sum_{0 \leq i \leq m} P_i
\end{equation}
where $P_i$ is the interval-poset where exactly $i$ relations have been added: $x_j \trprec k_1 +1$ for $j \leq i$.

\begin{figure}[ht]
\centering
\scalebox{0.8}{

\def \fpath{figures/}

\begin{tabular}{|c|c|}
\hline
Interval-poset & Corresponding interval \\
\hline

\begin{tikzpicture}[scale=1]
\node(T1) at (0,0) {1};
\node(T2) at (1,0) {2};
\node(T3) at (1,-1) {3};
\draw[line width = 0.5, color=blue] (T1) -- (T2);
\draw[line width = 0.5, color=red] (T3) -- (T2);
\end{tikzpicture}

&
\scalebox{0.6}{

%

\begin{tikzpicture}[scale=0.8]
\node (N0) at (1.500, 0.000){2};
\node (N00) at (0.500, -1.000){1};
\node (N01) at (2.500, -1.000){3};
\draw (N0) -- (N00);
\draw (N0) -- (N01);
\end{tikzpicture}},\scalebox{0.6}{}

\\
\hline

\begin{tikzpicture}[scale=1]
\node(T1) at (-1,-1) {1};
\node(T2) at (0,-1) {2};
\node(T3) at (1,-1) {3};
\node(T4) at (1,-2) {4};
\draw[line width = 0.5, color=blue] (T2) -- (T3);
\draw[line width = 0.5, color=red] (T4) -- (T3);
\end{tikzpicture}

&
\scalebox{0.6}{


\begin{tikzpicture}[scale=0.8]
\node (N0) at (2.500, 0.000){3};
\node (N00) at (1.500, -1.000){2};
\node (N000) at (0.500, -2.000){1};
\draw (N00) -- (N000);
\node (N01) at (3.500, -1.000){4};
\draw (N0) -- (N00);
\draw (N0) -- (N01);
\end{tikzpicture}},\scalebox{0.6}{

%

\begin{tikzpicture}[scale=0.8]
\node (N0) at (0.500, 0.000){1};
\node (N01) at (2.500, -1.000){3};
\node (N010) at (1.500, -2.000){2};
\node (N011) at (3.500, -2.000){4};
\draw (N01) -- (N010);
\draw (N01) -- (N011);
\draw (N0) -- (N01);
\end{tikzpicture}} 
\\
\hline
\input{\fpath interval-posets/I8-ex1}
&
\scalebox{0.6}{\input{\fpath trees/T8-ex1-n}},\scalebox{0.6}{\input{\fpath trees/T8-ex2-n}}
\\
\hline
\input{\fpath interval-posets/I8-ex2}
&
\scalebox{0.6}{\input{\fpath trees/T8-ex3-n}},\scalebox{0.6}{\input{\fpath trees/T8-ex2-n}}
\\
\hline
\input{\fpath interval-posets/I8-ex3}
&
\scalebox{0.6}{\input{\fpath trees/T8-ex4-n}},\scalebox{0.6}{\input{\fpath trees/T8-ex2-n}}
\\
\hline
\input{\fpath interval-posets/I8-ex4}
&
\scalebox{0.6}{\input{\fpath trees/T8-ex5-n}},\scalebox{0.6}{\input{\fpath trees/T8-ex2-n}}
\\
\hline

\end{tabular}
}
\caption{Interval interpretation of the composition of interval-posets.}
\label{fig:composition-2}
\end{figure}

\begin{Proposition}
Let $I_1$ be an interval-poset of size $k_1$ with $[T_1, T_1']$ as corresponding interval and $I_2$ an interval-poset of size $k_2$ with $[T_2,T_2']$ as corresponding interval. We set $k:= k_1 + 1$ and

\begin{enumerate}
\item $Q_\alpha$, the binary tree obtained by grafting $k(T_1, \emptyset)$ on the left of the leftmost node of $T_2$,
\item $Q_\omega$, the binary tree $k(T_1, T_2)$,
\item $Q'$, the binary tree $k(T_1',T_2')$.
\end{enumerate}
Then we have
\begin{equation}
\BB(I_1, I_2) = \sum_{Q \in [Q_\alpha,Q_\omega]} P_{[Q,Q']}
\end{equation}
where $P_{[Q,Q']}$ is the interval-poset of $[Q,Q']$.
\end{Proposition}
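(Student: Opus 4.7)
The plan is to unpack the formula \eqref{eq:simple-composition}, which writes $\BB(I_1,I_2)=P_0+P_1+\cdots+P_m$ where $P_i$ adds exactly the $i$ smallest decreasing relations $(x_j+k)\trprec k$ to the base composition, and then to identify each $P_i$ with the interval-poset of $[Q_i,Q']$ for a specific $Q_i$, checking that $\{Q_0,\dots,Q_m\}=[Q_\alpha,Q_\omega]$.

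First I will show that all the $P_i$ share the same top tree $Q'$. Since the composition prescribes the increasing relations entirely in clauses \ref{def:composition:cond:I1}--\ref{def:composition:cond:incr} and only the number of decreasing relations of the form $(x_j+k)\trprec k$ varies with $i$, the initial forest $\inc(P_i)$ is independent of $i$: it consists of $\inc(I_1)$ on $\{1,\dots,k_1\}$, the shifted $\inc(I_2)$ on $\{k+1,\dots,k+k_2\}$, and the relations $j\trprec k$ for every $j\le k_1$. By the recursive description of initial forests illustrated in Figure~\ref{fig:inc-dec}, this is precisely $\inc(k(T_1',T_2'))=\inc(Q')$. Applying Theorem~\ref{prop:tamari-interval-characterization} (together with Proposition~\ref{prop:minmax-interval}), we conclude that each $P_i$ is the interval-poset of some $[Q_i,Q']$.

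Next I will identify the bottom trees $Q_i$. Following the remark and Figure~\ref{fig:interval-poset-rotation} that introduces the rotation interpretation, adding the single decreasing relation $(x_{i+1}+k)\trprec k$ to $P_i$ corresponds to performing a right rotation on the bottom tree of the associated interval. Hence $Q_{i+1}$ covers $Q_i$ in the Tamari order, giving a saturated chain $Q_0\lessdot Q_1\lessdot\cdots\lessdot Q_m$. For the endpoints, I use the recursive reconstruction of the tree from its final forest that appears in the proof of Lemma~\ref{lem:carac-foret}: for $P_m$ the final forest matches the one obtained recursively from $k(T_1,T_2)$ (because the roots of $\dec(I_2)+k$ become the leftmost-branch children of $k$, and the rest of $\dec(I_1)$ sits to its left without relation), so $Q_m=Q_\omega$; for $P_0$ the vertex $k$ is isolated in $\dec(P_0)$, which forces $k$ to be a bottom-of-a-leftmost-branch node, and peeling off the leftmost branch of $T_2$ shows $Q_0=Q_\alpha$.

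Finally, I will verify that $\{Q_0,\dots,Q_m\}$ exhausts $[Q_\alpha,Q_\omega]$. The containment $\subseteq$ is immediate from Step~2. For the reverse inclusion, observe that $Q_\alpha$ and $Q_\omega$ differ only in the position of the node $k$: in $Q_\omega$ it sits at the root above $T_1$ and $T_2$, while in $Q_\alpha$ it has been slid down to become the left child of the leftmost-branch-bottom of $T_2$. Any right rotation inside $[Q_\alpha,Q_\omega]$ must involve $k$ (all other local configurations agree throughout the interval), and each such rotation lifts $k$ exactly one step up along the leftmost branch of $T_2$; this pins every $Q\in[Q_\alpha,Q_\omega]$ at a uniquely determined depth among the $m+1$ positions of the chain. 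I expect this last step to be the real obstacle, because it requires ruling out rotations ``away'' from the leftmost branch; the cleanest way is probably to compute the interval-poset of $[Q_\alpha,Q_\omega]$ directly from $\inc(Q_\omega)$ and $\dec(Q_\alpha)$ via Proposition~\ref{prop:comb-prop-minmax-inclusion} and read off that the only decreasing relations one can legally add to it are prefixes of $(x_1+k)\trprec k,\dots,(x_m+k)\trprec k$, which bijects the extensions with $\{0,1,\dots,m\}$ and matches the sum in \eqref{eq:simple-composition}.
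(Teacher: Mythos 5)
Your proposal is correct and follows essentially the same route as the paper's proof: decompose $\BB(I_1,I_2)$ into the posets $P_0,\dots,P_m$, observe that they share the increasing relations and hence the upper tree $Q'$, identify $Q_0=Q_\alpha$ and $Q_m=Q_\omega$, and interpret each added decreasing relation as a rotation of the node $k$ with its parent, yielding the saturated chain $Q_\alpha=Q_0\lessdot\dots\lessdot Q_m=Q_\omega$. Your final step, checking via the interval-poset of $[Q_\alpha,Q_\omega]$ that the only admissible decreasing extensions are the prefixes of $x_1+k\trprec k,\dots,x_m+k\trprec k$, supplies a justification for the claim that $[Q_\alpha,Q_\omega]$ is exactly this chain, which the paper states without further detail.
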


\begin{proof}
The composition of $I_1$ and $I_2$ is a sum of interval-posets $P_0, \dots, P_{m}$ where $m = \Itrees(I_2)$ and where $P_i$ is the interval-poset where exactly $i$ decreasing relations have been added. The maximal tree of all intervals is always the same as they all share the same increasing relations. This maximal tree is $Q':=k(T_1', T_2')$. The final forest $\dec(P_0)$ of $P_0$ contains $\Itrees(I_1) + \Itrees(I_2) +1$ connected components. The nodes on the leftmost branch of its minimal tree are given by those of $T_1$, then $k$, then those of $T_2$, \emph{i.e.} this is exactly $Q_\alpha$. Let $Q_i$ be the minimal tree of $P_i$. To go from $P_i$ to $P_{i+1}$, a decreasing relation is added to the vertex $k$: this corresponds to a rotation between the node $k$ of $Q_i$ and its parent node. This process ends when $T_2$ has been completely switched to
the right side of the node $k$. We then obtain the tree $Q_m = Q_\omega$.

The interval between $Q_\alpha$ and $Q_\omega$ is actually a saturated chain: $Q_\alpha = Q_0 \lessdot Q_1 \lessdot \dots \lessdot Q_m = Q_\omega$.
\end{proof}

As an example, the interpretation of the computation in Figure \ref{fig:composition} in terms of intervals is given in Figure \ref{fig:composition-2}. 

The composition of intervals-posets can be decomposed into two different operations: a left product $\pleft$ and a right product $\pright$.

\begin{Definition}
\label{def:left-right-product}
Let $I_1$ and $I_2$ be two interval-posets such that $\Itrees(I_2) = m$ with $x_1 < x_2 < \dots < x_m$ the roots of the trees of $\dec(I_2)$. Let $\alpha$ and $\omega$ be respectively the label of minimal value of $I_2$ and the label of maximal value of $I_1$. Then
\begin{enumerate}
\item $I_1 \pleft I_2$ is the interval-poset obtained by a shifted concatenation of $I_1$ and $I_2$ with the increasing relations $y \trprec \alpha$ for all $y \in I_1$.
\item $I_1 \pright I_2$ is the sum of the $m+1$ interval-posets $P_0, P_1, \dots, P_m$ where $P_i$ is the shifted concatenation of $I_1$ and $I_2$ with exactly $i$ added decreasing relations: $x_j \trprec \omega$ for $j \leq i$.
\end{enumerate}
\end{Definition}

As an example,
\begin{align}
\notag
\begin{aligned}\scalebox{0.8}{

\begin{tikzpicture}[xscale=0.8, yscale=0.8]
\node(T2) at (0,-1) {2};
\node(T1) at (0,0) {1};
\node(T3) at (1,-1) {3};
\draw[line width = 0.5, color=blue] (T2) -- (T3);
\draw[line width = 0.5, color=red] (T2) -- (T1);
\end{tikzpicture}
}\end{aligned}
\pleft
\begin{aligned}\scalebox{0.8}{

\begin{tikzpicture}[xscale=0.5, yscale=0.8]
\node(T1) at (0,0) {1};
\node(T2) at (1,0) {2};
\node(T3) at (1,-1) {3};
\draw[line width = 0.5, color=red] (T3) -- (T2);
\end{tikzpicture}
}\end{aligned} &=
\begin{aligned}\scalebox{0.8}{\input{figures/interval-posets/I6-ex2}}\end{aligned} \\
\notag
\begin{aligned}\scalebox{0.8}{}\end{aligned}
\pright
\begin{aligned}\scalebox{0.8}{}\end{aligned} &=
\begin{aligned}\scalebox{0.8}{

\begin{tikzpicture}[scale=0.8]
\node(T2) at (0,-1) {2};
\node(T1) at (0,0) {1};
\node(T3) at (1,-1) {3};
\node(T4) at (1,-2) {4};
\node(T5) at (2,-2) {5};
\node(T6) at (2,-3) {6};
\draw[line width = 0.5, color=blue] (T2) -- (T3);
\draw[line width = 0.5, color=red] (T2) -- (T1);
\draw[line width = 0.5, color=red] (T6) -- (T5);
\end{tikzpicture}
}\end{aligned} +
\begin{aligned}\scalebox{0.8}{\input{figures/interval-posets/I6-ex4}}\end{aligned} +
\begin{aligned}\scalebox{0.8}{\input{figures/interval-posets/I6-ex5}}\end{aligned}.
\end{align}

From the description of the composition given by \eqref{eq:simple-composition}, it is clear that
\begin{equation}
\BB(I_1, I_2) = I_1~\pleft~u~\pright~I_2
\end{equation}
where $u$ is the interval-poset with a single vertex. For example, the composition of Figure~\ref{fig:composition} reads
\begin{align*}
\BB( 
\begin{aligned}
\scalebox{0.6}{}
\end{aligned}
,
\begin{aligned}
\scalebox{0.6}{}
\end{aligned}
) &= 
\begin{aligned}
\scalebox{0.6}{}
\end{aligned}
~\pleft~ 
\scalebox{0.6}{1}
~\pright~
\begin{aligned}
\scalebox{0.6}{}
\end{aligned}.
\end{align*}

The order on the two operations is not important: $(I_1 \pleft u) \pright I_2 = I_1 \pleft (u \pright I_2)$. In Appendix \ref{app:sub-sec:comp}, we give the Sage code of the composition using left and right products.

\subsection{Enumeration of interval-posets}
\label{sec:enumeration}

The $\B$ operator can also be decomposed into two operations,
\begin{equation}
\label{eq:polleft-right}
f \polleft g := fg, \qquad 
f \polright g := f \Delta(g),
\end{equation}
where
\begin{equation}
\label{eq:delta}
\Delta(g) := \frac{x g(x,y) - g(1,y)}{x - 1}.
\end{equation}
And we have
\begin{equation}
\B(f,g) = f \polleft xy \polright g.
\end{equation}

The composition of interval-posets is a combinatorial interpretation of the $\B$ operator as stated in the following Proposition.

\begin{Proposition}
\label{prop:combinatorial-equivalence-composition}
Let $I_1$ and $I_2$ be two interval-posets and $\PI$ the linear map from Definition \ref{def:stat}. Then
\begin{align}
\label{eq:equiv-pleft}
\PI(I_1 \pleft I_2) &= \PI(I_1) \polleft \PI(I_2), \\
\label{eq:equiv-pright}
\PI(I_1 \pright I_2) &= \PI(I_1) \polright \PI(I_2),
\end{align}
and consequently
\begin{equation}
\PI(\BB( I_1,  I_2)) = \B(\PI(I_1), \PI(I_2)).
\end{equation}
\end{Proposition}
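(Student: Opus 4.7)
My plan is to verify the two identities \eqref{eq:equiv-pleft} and \eqref{eq:equiv-pright} directly by tracking how the two statistics $\Isize$ and $\Itrees$ behave under $\pleft$ and $\pright$, then deduce the $\BB$ identity by composition. Throughout, write $s_i := \Isize(I_i)$ and $t_i := \Itrees(I_i)$, so $\PI(I_i) = x^{t_i}y^{s_i}$.

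For $\pleft$: the poset $I_1 \pleft I_2$ is a shifted concatenation plus only \emph{increasing} relations $y \trprec \alpha$, where $\alpha$ is the minimal label of (the shifted) $I_2$. Since $\dec(I_1 \pleft I_2)$ keeps only decreasing relations, these added increasing relations play no role, and $\dec(I_1 \pleft I_2)$ is the disjoint union of $\dec(I_1)$ and $\dec(I_2)$ (shifted). Hence $\Isize(I_1\pleft I_2) = s_1+s_2$ and $\Itrees(I_1\pleft I_2) = t_1+t_2$, giving $\PI(I_1\pleft I_2) = x^{t_1+t_2}y^{s_1+s_2} = \PI(I_1)\PI(I_2) = \PI(I_1) \polleft \PI(I_2)$.

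For $\pright$: by Definition \ref{def:left-right-product}, $I_1\pright I_2 = \sum_{i=0}^{m} P_i$ where $m = t_2$ and $P_i$ is the shifted concatenation with the $i$ extra decreasing relations $x_j \trprec \omega$ for $j\le i$, the $x_j$ being the roots of $\dec(I_2)$ (shifted). Each $P_i$ has size $s_1+s_2$. In $P_0$, $\dec(P_0)$ has $t_1+t_2$ trees as in the $\pleft$ case. Passing from $P_{i-1}$ to $P_i$ attaches the root $x_i$ of $\dec(I_2)$ below $\omega$, which kills exactly one connected component of $\dec$ (no other vertex changes root-status, since the added relation concerns only $x_i$ and $\omega$, and $\omega < x_i$). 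Thus $\Itrees(\dec(P_i)) = t_1 + t_2 - i$, and
\begin{equation}
\PI(I_1 \pright I_2) \;=\; \sum_{i=0}^{t_2} x^{t_1+t_2-i} y^{s_1+s_2} \;=\; x^{t_1}y^{s_1+s_2}\,\frac{x^{t_2+1}-1}{x-1}.
\end{equation}
On the other hand, $\Delta(\PI(I_2)) = \frac{x\cdot x^{t_2}y^{s_2} - y^{s_2}}{x-1} = y^{s_2}\frac{x^{t_2+1}-1}{x-1}$, so $\PI(I_1)\polright \PI(I_2) = x^{t_1}y^{s_1}\cdot y^{s_2}\frac{x^{t_2+1}-1}{x-1}$, which matches.

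Finally, for the $\BB$ identity I invoke the decomposition $\BB(I_1,I_2) = I_1 \pleft u \pright I_2$ noted after Definition \ref{def:left-right-product}, together with $\PI(u) = xy$. Applying \eqref{eq:equiv-pleft} and then \eqref{eq:equiv-pright} (the order is immaterial by the remark in the text) gives
\begin{equation}
\PI(\BB(I_1,I_2)) \;=\; \PI(I_1) \polleft xy \polright \PI(I_2) \;=\; \B(\PI(I_1),\PI(I_2)),
\end{equation}
after extending $\PI$ by linearity as in Definition \ref{def:stat}. The only point requiring care is the bookkeeping in the $\pright$ case: one must check that adding the decreasing relation $x_i \trprec \omega$ indeed decrements the number of $\dec$-components by exactly one, which follows because $x_i$ is minimal in its component of $\dec(I_2)$ (hence loses root status) while no other label's root status is affected.
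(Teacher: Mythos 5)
Your proposal is correct and follows essentially the same route as the paper: verify \eqref{eq:equiv-pleft} by noting that $\pleft$ only adds increasing relations (so $\dec$ of the result is a disjoint union), verify \eqref{eq:equiv-pright} by observing that each added decreasing relation $x_j \trprec \omega$ reduces $\Itrees$ by exactly one and summing the resulting geometric series against $\Delta(\PI(I_2))$, and deduce the $\BB$ identity from the decomposition $\BB(I_1,I_2)=I_1\pleft u\pright I_2$. Your closing remark justifying the decrement-by-one is a welcome extra detail that the paper leaves implicit.
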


As an example, in Figure \ref{fig:composition}, $\PI(I_1) = x^2 y^3$ and $\PI(I_2) = x^3 y^4$. And we have $\PI(\BB(I_1,I_2)) = y^8(x^6 + x^5 + x^4 + x^3) = \B(x^2 y ^3, x^3 y^4)$.

\begin{proof}
Let $I_1$ and $I_2$ be two interval-posets. The left product $I_1 \pleft I_2$ is the shifted concatenation of $I_1$ and $I_2$ on which only increasing relations have been added. Clearly,
\begin{equation*}
\PI(I_1 \pleft I_2) = y^{\Isize(I_1) + \Isize(I_2)} x^{\Itrees(I_1) + \Itrees(I_2)} = \PI(I_1)\PI(I_2) 
\end{equation*}
which proves \eqref{eq:equiv-pleft}.

Now, let $I_2$ be such that $\Itrees(I_2) = m$ and let $x_1 < x_2 < \dots < x_m$ be the roots of $\dec(I_2)$. By definition,
\begin{equation*}
I_1 \pright I_2 = \sum_{0 \leq i \leq m} P_i
\end{equation*}
where exactly $i$ decreasing relations have been added between roots $x_1, \dots, x_i$ of $\dec(I_2)$ and the vertex with maximal label of $I_1$. We have $\Itrees(P_i) = \Itrees(I_1) + \Itrees(I_2) - i$ because each added decreasing relation reduces the number of trees by one. Then
\begin{align*}
\PI(I_1 \pright I_2) &= y^{\Isize(I_1) + \Isize(I_2)}x^{\Itrees(I_1)}(1 + x + x^2 + \dots x^m) \\
&= y^{\Isize(I_1) + \Isize(I_2)}x^{\Itrees(I_1)} \frac{x^{m+1} - 1}{x - 1} \\
&= \PI(I_1) \polright \PI(I_2).
\qedhere
\end{align*}
\end{proof}

Now, to prove Theorem \ref{thm:functional-equation}, we only need the following proposition.

\begin{Proposition}
\label{prop:unicity-composition}
Let $I$ be an interval-poset, then, there is exactly one pair of intervals $I_1$ and $I_2$ such that $I$ appears in the composition $ \BB( I_1, I_2)$. 
\end{Proposition}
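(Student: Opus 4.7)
The plan is to identify a unique \emph{pivot} vertex $k$ of $I$ from which the pair $(I_1,I_2)$ is read off. Say that $k$ is a pivot if (A) $i \trprec_I k$ for every $i<k$ and (B) there is no $j>k$ with $k \trprec_I j$. For existence, take $k$ to be the largest vertex satisfying (A), which exists since $k=1$ works vacuously. If (B) failed for $k$, with $k \trprec_I j$ for some $j>k$, the first axiom of Definition~\ref{def:interval-poset-definition} would give $b \trprec_I j$ for every $k<b<j$, and combined with (A) for $k$, every $i<j$ would satisfy $i \trprec_I j$; so $j$ would also satisfy (A), contradicting maximality. For uniqueness, if $k<k'$ both satisfied (A), then (A) for $k'$ gives $k\trprec_I k'$, violating (B) for $k$.

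Next I set $I_1$ to be the restriction of $I$ to $\{1,\dots,k-1\}$ and $I_2$ to be the restriction to $\{k+1,\dots,n\}$ shifted down by $k$. The axioms of Definition~\ref{def:interval-poset-definition} restrict to any interval of consecutive labels, so $I_1$ and $I_2$ are interval-posets. I then rule out cross-relations in $I$ between $\{1,\dots,k-1\}$ and $\{k+1,\dots,n\}$: an increasing relation $i \trprec_I j$ with $i<k<j$ would force $k \trprec_I j$, violating (B); a decreasing relation $j \trprec_I i$ with $i<k<j$ would force $k \trprec_I i$, which together with $i\trprec_I k$ from (A) would create a cycle.

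At this point, the only relations of $I$ incident to $k$ are the increasing ones $i\trprec_I k$ forced by (A), and a set $S := \{j>k : j \trprec_I k\}$ of decreasing arrows. By the second axiom of Definition~\ref{def:interval-poset-definition}, $S$ is an initial segment $\{k+1,\dots,k+s'\}$. If $s'>0$, I show that $k+s'$ is a maximal element (root) of $\dec(I_2)$: otherwise some $\tilde c > k+s'$ satisfies $\tilde c \trprec_I k+s'$, so by transitivity $\tilde c \trprec_I k$, contradicting $k+s'=\max S$. Hence $k+s' = x_s$ for some $s$ in the notation of Definition~\ref{def:composition}, and $I$ coincides with the term $P_s$ of the sum~\eqref{eq:simple-composition}. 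For uniqueness of the pair, any expression $I \in \BB(J_1,J_2)$ forces the pivot $|J_1|+1$ to satisfy (A)+(B) and hence equal $k$, and then $J_1, J_2$ are recovered as the same two restrictions of $I$.

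The step I expect to require the most care is the last matching: the composition in Definition~\ref{def:composition} builds up $S$ by adding the ordered roots $x_1 < \cdots < x_m$ of $\dec(I_2)$ one by one, while the interval-poset axioms independently force $S$ to be a consecutive segment starting at $k+1$; the transitivity argument above is the bridge between these two descriptions, and it is the key point where both interval-poset axioms and the poset antisymmetry are used together.
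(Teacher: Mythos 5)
Most of your proof is sound and follows the same route as the paper: the pivot $k$ (largest vertex with $i\trprec k$ for all $i<k$), the verification that it also satisfies (B), the restrictions $I_1,I_2$, the exclusion of cross-relations, and the recovery of the pair from any expression $I\in\BB(J_1,J_2)$ all match the paper's argument. The gap is in your final matching step. You check that no $\tilde c>k+s'$ satisfies $\tilde c\trprec_I k+s'$ and conclude that $k+s'$ is a ``maximal element (root)'' of $\dec(I_2)$, hence equal to some $x_s$. But what that check rules out is the existence of an element \emph{below} $k+s'$ in the poset, i.e.\ it shows $k+s'$ is a leaf ($\trprec$-minimal), not a root. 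In the paper's conventions (proof of Lemma~\ref{lem:carac-foret}) the root of a component of a final forest is the element that every other element of its component precedes, and it carries the \emph{minimal} label of that component; $\max S$ is instead the \emph{largest} label of the last component that $S$ meets, so $k+s'=x_s$ only when that component is a singleton. Moreover the two properties you establish for $S$ (initial label segment, maximum is a leaf) do not suffice to force $S$ to be a union of whole components: for $\dec(I_2)$ with relations $2\trprec 1$ and $3\trprec 1$, the set $S=\{k+1,k+2\}$ passes both tests yet corresponds to no term $P_i$. So the assertion ``$I$ coincides with the term $P_s$'' does not follow as written.

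The repair is one line: the components $F_1,\dots,F_m$ of $\dec(I_2)$ occupy consecutive label blocks and each root $x_j$ is the minimal label of $F_j$, so the initial segment $S$ meets $F_j$ if and only if $x_j\in S$; and $x_j\in S$ means $x_j\trprec k$, which together with $y\trprec x_j$ for all $y\in F_j$ gives $F_j\subseteq S$ by transitivity. Hence $S=F_1\cup\dots\cup F_i$ for some $i$ and $I=P_i$. Alternatively you can drop this step altogether: Definition~\ref{def:composition} defines $\BB(I_1,I_2)$ as the sum of \emph{all} interval-posets satisfying conditions (i)--(iv), so once (i)--(iv) are verified for $I$ (which your (A), (B) and the restriction argument already accomplish), membership in the sum is immediate. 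That is exactly what the paper's proof does; the explicit enumeration \eqref{eq:simple-composition} is not needed for this proposition.
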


\begin{proof}
Let $I$ be an interval-poset of size $n$ and let $k$ be the vertex of $I$ with maximal label such that $i \trprec k$ for all $i < k$. The vertex $1$ satisfies this property, so one can always find such a vertex. We prove that $I$ only appears in the composition of $I_1$ by $I_2$, where $I_1$ is formed by the vertices and relations of $1, \dots, k -1$ and $I_2$ is formed by the re-normalized vertices and relations of $k + 1, \dots, n$. Note that one or both of these intervals can be of size 0.

Conditions \ref{def:composition:cond:I1}, \ref{def:composition:cond:I2}, and \ref{def:composition:cond:incr} of Definition \ref{def:composition} are clearly satisfied by construction. If condition \ref{def:composition:cond:decr} is not satisfied, it means that we have a relation $k \trprec j$ with $j>k$. Then, by definition of an interval-poset, we also have $\ell \trprec j$ for all $k<l<j$ and by definition of $k$, we have $i\trprec k \trprec j$ for all $i<k$, so for all $i<j$, we have $i \trprec j$. This is not possible as $k$ has been chosen to be maximal among vertices with this property. 

This proves that $I$ appears in the composition of $I_1$ by $I_2$. Now, if $I$ appears in $\BB(I_1',I_2')$, the vertex $k' := \vert I_1 ' \vert + 1$ is by definition the vertex where for all $i<k'$, we have $i \trprec k'$ and for all $j>k'$, we have $k' \nprec j$, this is exactly the definition of $k$. So $k' = k$ which makes $I_1' = I_1$ and $I_2' = I_2$.
\end{proof}

\begin{proof}[Proof of Theorem \ref{thm:functional-equation}]
Let $\SI := \sum_{T_1 \leq T_2} P_{[T_1,T_2]}$ be the formal power series of interval-posets. By Proposition \ref{prop:unicity-composition}, we have
\begin{equation*}
\SI = \BB(\SI,\SI) + \emptyset.
\end{equation*}
And by Proposition \ref{prop:combinatorial-equivalence-composition}, we have
\begin{align*}
\Phi& = \PI(\SI) 
= \PI(\BB(\SI,\SI)) + 1 \\
&= \B(\Phi,\Phi) + 1.
\qedhere
\end{align*}
\end{proof}

\subsection{Counting smaller elements in Tamari}
\label{sec:main-result}

By developing \eqref{eq:functional-equation}, we obtain
\begin{align*}
\Phi &= 1 + \B(1,1) + \B(\B(1,1),1) + \B(1,\B(1,1)) + \cdots \\
&= \sum_T y^{|T|}\BT_T,
\end{align*}
where $\BT_T$ is the Tamari polynomial of Definition \ref{def:tamari-polynomials}. Theorem \ref{thm:smaller-trees} is proved by the following proposition.

\begin{Proposition}
\label{prop:sum-composition}
Let $T := k(T_L, T_R)$ be a binary tree and $S_T := \sum_{T' \leq T} P_{[T',T]}$ the sum of all interval-posets whose maximal tree is $T$. Then $S_T = \BB(S_{T_L}, S_{T_R})$.
\end{Proposition}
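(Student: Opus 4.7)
The plan is to use the unique decomposition of interval-posets from Proposition \ref{prop:unicity-composition} and show that, when restricted to interval-posets whose maximal tree is $T = k(T_L, T_R)$, this decomposition identifies the distinguished vertex with the root $k$ of $T$ and the two pieces $I_1, I_2$ with interval-posets in $S_{T_L}, S_{T_R}$ respectively.

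First, I would identify the distinguished vertex. Let $P$ be an interval-poset whose maximal tree is $T = k(T_L,T_R)$ (so $k = |T_L|+1$). The increasing relations of $P$ are exactly the relations of $\inc(T)$. Since $k$ is the root of $T$, all labels $1,\dots,k-1$ lie in its left subtree, so $i \trprec_P k$ for every $i < k$. Moreover, no label $j > k$ can satisfy $i \trprec j$ for all $i<j$, because $k \trprec j$ would be an increasing relation forcing $k$ into the subtree of $j$ in $T$, contradicting the fact that $k$ is the root. Hence $k$ is precisely the vertex singled out in the proof of Proposition \ref{prop:unicity-composition}, and $P$ appears in a unique composition $\BB(I_1,I_2)$ where $I_1$ is the restriction of $P$ to $\{1,\dots,k-1\}$ and $I_2$ the restriction (shifted) to $\{k+1,\dots,n\}$.

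Next I would check that $I_1 \in S_{T_L}$ and $I_2 \in S_{T_R}$. The increasing relations of $I_1$ are those of $\inc(T)$ restricted to $\{1,\dots,k-1\}$; by the recursive description of $\inc$ in Figure \ref{fig:inc-dec}, these are exactly the relations of $\inc(T_L)$, so the maximal tree of $I_1$ is $T_L$. The argument is symmetric for $I_2$ using the recursive description of $\inc(T_R)$ on the shifted labels $\{k+1,\dots,n\}$. Thus every summand of $S_T$ occurs as a summand of $\BB(S_{T_L}, S_{T_R})$ in exactly one way.

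For the converse direction, given $I_1 \in S_{T_L}$ and $I_2 \in S_{T_R}$, I would verify that every interval-poset appearing in $\BB(I_1,I_2)$ belongs to $S_T$. By Definition \ref{def:composition}, the increasing relations of any summand of $\BB(I_1,I_2)$ consist of those of $I_1$, those of $I_2$ shifted, and the relations $i \trprec k$ for all $i<k$; these are precisely the relations of $\inc(T)$. By Proposition \ref{prop:comb-prop-trivial}\ref{prop:comb-prop-minmax-inclusion} (or directly from Proposition \ref{prop:minmax-interval} applied to the set of increasing relations), this forces the maximal tree of every such summand to be $T$. Combining the two inclusions yields a bijection between summands of $S_T$ and summands of $\BB(S_{T_L},S_{T_R})$, proving the equality. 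The main technical point to be careful about is the labelling bookkeeping: one must confirm that shifting labels by $k$ in $I_2$ correctly sends $\inc(T_R)$ to the part of $\inc(T)$ on $\{k+1,\dots,n\}$, which follows from the recursive description of the initial forest.
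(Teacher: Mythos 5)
Your proof is correct and follows essentially the same route as the paper: both characterize $S_T$ as the extensions of $\inc(T)$ by decreasing relations only, split such a poset by restricting to $\{1,\dots,k-1\}$ and $\{k+1,\dots,n\}$, identify the pieces with elements of $S_{T_L}$ and $S_{T_R}$ via the recursive description of the initial forest, and then check the converse inclusion. The only difference is that you explicitly invoke Proposition~\ref{prop:unicity-composition} to pin down the distinguished vertex as $k$ and to handle the multiplicity bookkeeping, which the paper leaves implicit; this is a harmless (arguably welcome) extra precaution.
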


\begin{proof}
Let $T$ be a binary tree of size $n$ such that $T = k(T_L,T_R)$. The interval-poset of the initial interval $[T_0,T]$ is $\inc(T)$, the initial forest of $T$. From Proposition \ref{prop:comb-prop-trivial} \ref{prop:comb-prop-minmax-inclusion}, the sum $S_T$ is the sum of all interval-posets $I$ which extends $\inc(T)$ by adding only decreasing relations.

Let $I$ be an interval of $S_T$. Let $I_L$ and $I_R$ be the sub-posets obtained by restricting $I$ to respectively $1, \dots, k-1$ and $k+1, \dots, n$. By the recursive definition of initial forests given by Figure \ref{fig:inc-dec}, $I_L$ and $I_R$ are poset extensions of respectively $\inc(T_L)$ and $\inc(T_R)$ where only decreasing relations have been added. And then $I_L \in S_{T_L}$ and $I_R \in S_{T_R}$. Finally, it is clear that $I \in \BB(I_L,I_R)$. Indeed, $I$ is a poset extension of $\inc(T)$ and so $i \trprec k$ for $i < k$ and $k \ntrprec j$ for $j > k$.

Conversely, if $I_L$ and $I_R$ are elements of respectively $S_{T_L}$ and $S_{T_R}$, then any interval $I$ of $\BB(I_L,I_R)$ is in $S_T$. Indeed, by construction, $I$ is an extension of $\inc(T)$ where only decreasing relations have been added.
\end{proof}

For a given tree $T$, the coefficient of the monomial with maximal degree in $x$ in $\BT_T$ is always 1. It corresponds to the initial interval $\inc(T)$. The interval with the maximal number of decreasing relations corresponds to $[T,T]$. An example of $\BT_T$ and of the computation of smaller trees is presented in Figure \ref{fig:BTExample}.

\begin{figure}[ht]

  \input{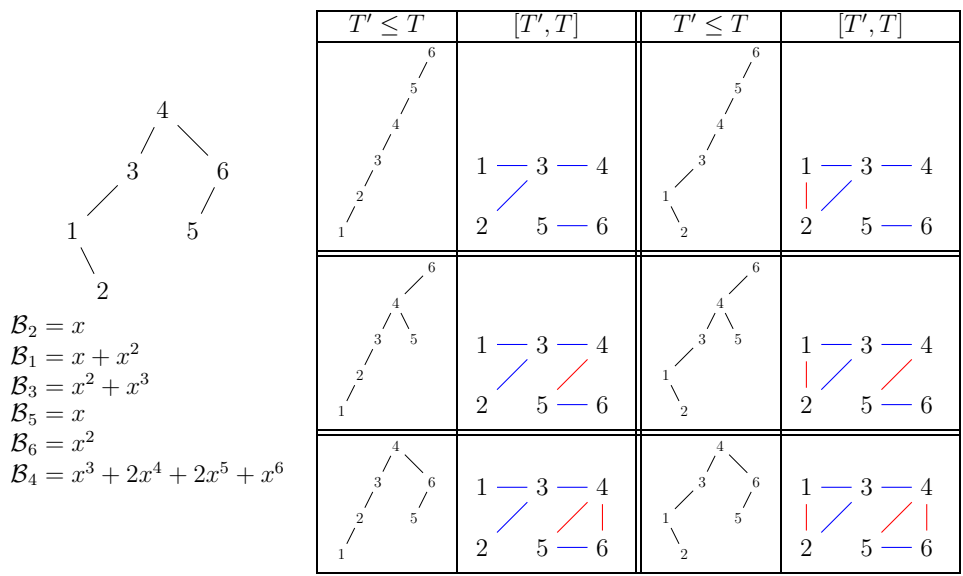}

  \caption{Example of the computation of $\BT_T$ and list of all
    smaller trees with associated intervals.}
  \label{fig:BTExample}
\end{figure}

\begin{proof}[Proof of Theorem \ref{thm:smaller-trees}]
Counting the number of trees $T' \leq T$ refined by the number of nodes on their leftmost branch can be done by counting the number of intervals $I = [T',T]$ refined by $\Itrees(T')$. We then want to prove that $\BT_T = \PI(S_T)$ where $S_T = \sum_{T' \leq T} P_{[T',T]}$. It can be done by induction on the size of $T$. The initial case is trivial. And if we set $T=k(T_L,T_R)$, by the induction hypothesis, we have that $\BT_{T_L} = \PI(S_{T_L})$ and $\BT_{T_R} = \PI(S_{T_R})$. The result is then a direct consequence of Propositions \ref{prop:combinatorial-equivalence-composition} and \ref{prop:sum-composition},
\begin{align*}
\BT_T &= \B(\PI(S_{T_L}),\PI(S_{T_R})) 
= \PI(\BB(S_{T_L},S_{T_R})) \\
&= \PI(S_T).
\qedhere
\end{align*}
\end{proof}

\subsection{Bivariate polynomials}
\label{sec:bivar}

In some recent work \cite{ChapBiVar}, Chapoton computed some bivariate polynomials that seem to be similar to the ones we study. The computation  of the first examples of \cite[formula (7)]{ChapBiVar} with  $b=1$ and $t = 1 - 1/x$ leads us to conjecture that each polynomial is equal to some $\BT_T(x)$ where $T$ is a binary tree with no left subtree. More precisely, the non planar rooted tree corresponding to $T$ is the non planar version of the planar forest $\dec(T)$.

A $b$ parameter can be also be added to our formula. For an interval $[T',T]$, it is either the number of nodes in $T'$ which have a right subtree, or in the interval-poset the number of nodes $x$ with a relation $y \trprec x$ and $y>x$. By a generalization of the linear function $\PI$, one can associate a monomial in $b$, $x$, and $y$ with each interval-poset. The bilinear form now reads:
\begin{equation}
\B(f,g) = y \left( xbf \frac{x g - g_{x=1}}{x - 1} -bxfg + xfg \right),
\end{equation}
where $f$ and $g$ are polynomials in $x$, $b$, and $y$. Proposition~\ref{prop:combinatorial-equivalence-composition} still holds, since a node with a decreasing relation is added in all terms of the composition but one. As an example, in Figure \ref{fig:composition}, one has $\B(y^3x^2b,y^4x^3b) = y^8(x^6b^2 + x^5b^3 + x^4b^3 + x^3b^3)$.

With this definition of the parameter $b$, the bivariate polynomials $\BT 
_T(x,b)$ where $T$ has no left subtree seem to be exactly the ones computed by Chapoton in \cite{ChapBiVar} when taken on $t = 1 - 1/x$. This seems to indicate some combinatorial and algebraic links between structures from very different mathematical contexts.

\section{$m$-Tamari lattices}
\label{sec:mTamari}

\subsection{Definition}
\label{sec:m-tam-def}

The $m$-Tamari lattices are a generalization of the Tamari lattice where objects have a $(m+1)$-ary structure instead of binary. They were introduced in \cite{BergmTamari} and can be described in terms of $m$-ballot paths. A $m$-ballot path is a lattice path from $(0,0)$ to $(nm,n)$ made from horizontal steps $(1,0)$ and vertical steps $(0,1)$ which always stays above the line $y=\frac{x}{m}$. When $m=1$, a $m$-ballot path is just a Dyck path where up steps and down steps have been replaced by respectively vertical steps and horizontal steps. They are well known combinatorial objects counted by the $m$-Catalan numbers
\begin{equation}
\frac{1}{mn + 1} \binom{(m+1)n}{n}.
\end{equation}

They can also be interpreted as words on a binary alphabet and the notion of  \emph{primitive path} still holds. Indeed, a primitive path is a $m$-ballot path which does not touch the line $y=\frac{x}{m}$ outside its extremal points. From this, the definition of the rotation on Dyck path given in Section \ref{sec:def} can be naturally extended to $m$-ballot-paths, see Figure \ref{fig:mpath-rot}.

\begin{figure}[ht]
\centering
\input{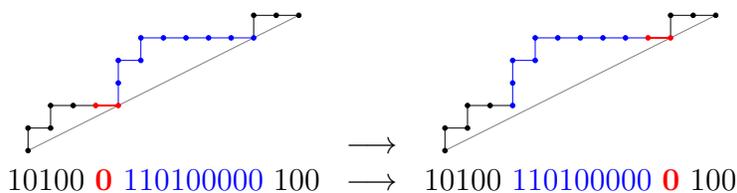}
\caption{Rotation on $m$-ballot paths.}
\label{fig:mpath-rot}
\end{figure}

When interpreted as a cover relation, the rotation on $m$-ballot paths induces a well defined order, and even a lattice \cite{BergmTamari}. This is what we call the $m$-Tamari lattice or $\Tamnm$, see Figure \ref{fig:mTamari} for an example.

\begin{figure}[ht]

  \input{figures/mTamari-3-3-paths}
  
  \caption{$m$-Tamari on $m$-ballot paths: $\Tam{3}{2}$.}
  
  \label{fig:mTamari}
\end{figure}

A formula counting the number of intervals in $\Tamnm$ was conjectured in \cite{BergmTamari} and was proven in \cite{mTamari}. The authors use a functional equation that is a direct generalization of \eqref{eq:functional-equation}. Let $\Phim(x,y)$ be the generating function of intervals of the $m$-Tamari lattice where $y$ is the size $n$ and $x$ a statistic called number of contacts, then \cite[formula (3)]{mTamari} reads\footnote{for consistency with the first part of the article, the $x$ parameter counts the number of contacts minus 1 and so the formula of \cite{mTamari} has been divided by~$x$.}
\begin{equation}
\label{eq:mfunctional-equation}
\Phim(x,y) = 1 + \Bm(\Phi,\Phi, \dots, \Phi),
\end{equation}
where $\Bm$ is a $(m+1)$-linear form defined by 
\begin{align}
\label{eq:m1linear-operator}
\Bm(f, g_1 \dots, g_{m}) &:= xy f \Delta(g_1 \Delta(\dots \Delta(g_{m}))\dots), \\
\Delta(g) &:= \frac{x g(x,y) - g(1,y)}{x - 1}.
\end{align}

Expanding \eqref{eq:functional-equation}, we obtain a sum of $(m+1)$-ary trees. This leads to conjecture that the formula of Theorem \ref{thm:smaller-trees} for counting smaller elements in the lattice generalizes in the $m$-Tamari case, this is indeed true and we prove it in this section.

\subsection{Interpretation in terms of trees}
\label{sec:m-trees}

It was proven in \cite{mTamari} that $\Tamnm$ is actually an upper ideal of $\Tam{n \times m}{1}$. Indeed, there is a natural injection from $m$-ballot paths of size $n$ to Dyck paths of size $mn$ by replacing each vertical step of a $m$-ballot path by $m$ adjacent up steps, see Figure~\ref{fig:m-dyck}. The result set is made of all Dyck paths whose numbers of adjacent up steps are divisible by $m$. We call these paths $m$-Dyck paths and they are in clear bijection with $m$-ballot paths. The set of $m$-Dyck paths is stable by the rotation operation. It is the upper ideal generated by the Dyck word $(1^m.0^m)^n$ which is the image of the $m$-ballot path $(1.0^m)^n$, see Figure \ref{fig:minimal-m-tam}.

\begin{figure}[ht]
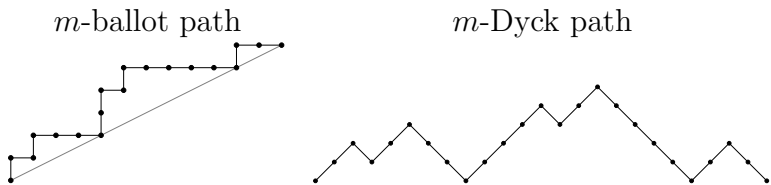

\centering

\def \fpath{figures/}

\begin{tabular}{cc}
$m$-ballot path & $m$-Dyck path \\
\scalebox{0.3}{\input{\fpath mpaths/P6-2-ex1}} &
\scalebox{0.25}{\input{\fpath dyck/D12-ex1}}
\end{tabular}
\caption{A $2$-ballot path and its corresponding $2$-Dyck path.}
\label{fig:m-dyck}
\end{figure}

It is then possible to compute the binary tree image of the minimal $m$-Dyck path by the bijection described in Section \ref{sec:def}. We call this tree the $(n,m)$-comb: it is a left-comb of $n$ right-combs of size $m$, as illustrated in Figure \ref{fig:minimal-m-tam}. 

\begin{figure}[ht]

  \input{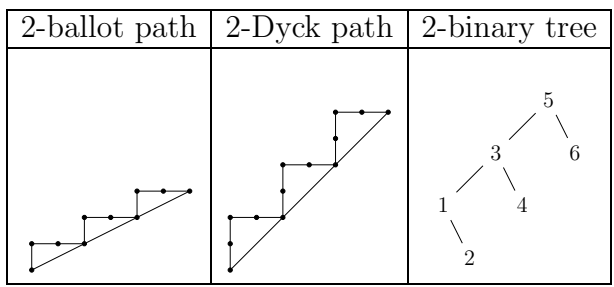}

  \caption{Minimal element of $\mathcal{T}_3^{(2)}$.}

  \label{fig:minimal-m-tam}
\end{figure}

Because the $m$-Tamari lattice corresponds to the upper ideal of the Tamari lattice generated by the word $(1^m.0^m)^n$, it is now clear that it also corresponds to the upper ideal generated by the $(n,m)$-comb, which we give in Figure \ref{fig:mTamari-mbinary}. To fully understand the lattice in terms of trees, two questions remains:
\begin{itemize}
\item How to characterize a binary tree belonging to the upper ideal of the $(n,m)$-comb, \emph{i.e.}, the images of the $m$-Dyck paths?

\item What is the bijection between those trees and the $m$-ballot paths?
\end{itemize}

We can answer both questions by defining a new class of binary trees which we call the \emph{$m$-binary trees}. This definition is crucial to our work on the $m$-Tamari lattice, especially to generalize our results using interval-posets. In this Section, we first give the definition of $m$-binary trees, then prove that they correspond to the upper ideal generated by the $(n,m)$-comb. Finally, we give the explicit bijection between $m$-ballot paths, $m$-binary trees and $(m+1)$-ary trees using the $(m+1)$-ary structure of $m$-binary trees.

\begin{Definition}
\label{def:mbinary}
We define $m$-binary trees recursively by being either the empty binary tree or a binary tree $T$ of size $m \times n$ constructed from $m+1$ subtrees $T_L, T_{R_1}, \dots, T_{R_m}$ such that
\begin{itemize}
\item the sum of the sizes of $T_L, T_{R_1}, \dots, T_{R_m}$ is $m \times (k-1)$,
\item each subtree $T_L, T_{R_1}, \dots, T_{R_m}$ is itself a $m$-binary tree.
\end{itemize}
And $T$ follows the structure bellow,

\begin{center}
\scalebox{0.8}{
\input{figures/m-binary-trees}
}
\end{center}

which we now describe. The left subtree of $T$ is $T_L$. To construct the right subtree $T_R$ of $T$ from $T_{R_1}, \dots, T_{R_m}$, we follow this algorithm:

\begin{algorithmic}
\State $T_R \gets T_{R_1}$
\For{$i = 2$ to $m$}
\State graft a single node $x$ on left of the leftmost node of $T_{R_{i-1}}$
\State graft $T_{R_i}$ on the right of $x$
\EndFor
\end{algorithmic}

\end{Definition}

\begin{figure}[ht]
\centering
\begin{tabular}{c|c}
\scalebox{0.5}{\input{figures/mbinary-example}}
&
\scalebox{0.5}{\input{figures/mbinary-example-2}}
\end{tabular}
\caption{Examples of $m$-binary trees for $m=2$: $T_L$ is in red, $T_{R_1}$ is in dotted blue and $T_{R_2}$ is in dashed green. In the second example, $T_{R_1}$ is empty.}
\label{fig:mbinary}
\end{figure}

Note that $T$ contains the nodes of $T_L, T_{R_1}, \dots, T_{R_m}$ plus exactly $m$ extra nodes. Those nodes are called the \emph{root nodes} of the $m$-binary tree $T$. The actual root of the binary tree $T$ is called the \emph{main root} and the other ones, the \emph{secondary roots}. If $T_{R_i}$ is empty, then the $i^{th}$ root node is directly on the right of the $(i-1)^{th}$ root node. Figure~\ref{fig:mbinary} shows two examples of $m$-binary trees for $m=2$ and the second one illustrates the case where $T_{R_i}$ is empty. The whole set of $m$-binary trees for $m=2$ and $n=3$ is given in Figure \ref{fig:mTamari-mbinary}. 

\begin{figure}[ht]
\centering
\scalebox{0.8}{
\input{figures/mTamari-3-2-mbinary}
}
\caption{The lattice $\Tam{3}{2}$ on $2$-binary tree.}
\label{fig:mTamari-mbinary}
\end{figure}

From the definition, a $m$-binary tree is a special kind of binary tree, just as a $m$-Dyck path is a special kind of Dyck path. The following proposition gives a useful criteria to decide if a given binary tree is indeed a $m$-binary tree. We then use this criteria to prove that $m$-binary trees are the elements of the upper ideal generated by the $(n,m)$-comb. 

\begin{Proposition}
\label{prop:mbinary-carac}
A binary tree $T$ is a $m$-binary tree if and only if it is of size $n \times m$ and its binary search tree satisfies:
\begin{align}
\label{eq:mbinary-cond}
i\cdot m \trprec i\cdot m-1 \trprec \dots \trprec i\cdot m-(m-1)
\end{align}
for all $1 \leq i \leq n$.
\end{Proposition}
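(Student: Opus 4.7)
We argue by induction on $n$; the base $n=0$ is immediate since the empty tree is an $m$-binary tree. Assume the claim holds for smaller $n$.

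For the forward direction, let $T$ be an $m$-binary tree built as in Definition~\ref{def:mbinary} from $T_L$ and $T_{R_1}, \dots, T_{R_m}$ with root nodes $x_1, \dots, x_m$. Unwinding each step of the construction, in particular using that grafting $x$ as the left child of the leftmost of $T_{R_{i-1}}$ places $x$ before $T_{R_{i-1}}$ in in-order, one checks that the in-order traversal of $T$ equals
\[
\mathrm{inorder}(T_L),\, x_1,\, x_2,\, \dots,\, x_m,\, \mathrm{inorder}(T_{R_m}),\, \dots,\, \mathrm{inorder}(T_{R_1}).
\]
By induction $|T_L|$ and each $|T_{R_j}|$ are multiples of $m$, so the BST labels place $T_L$ on $\{1,\dots,(k-1)m\}$ for some $k$, assign $x_j = (k-1)m + j$, and give each $T_{R_j}$ a contiguous label range whose endpoints are multiples of $m$. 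The chain conditions within each subtree transfer to $T$ unchanged (ancestry within a subtree coincides with ancestry in $T$, and shifts are by multiples of $m$); for the new block $\{x_1,\dots,x_m\}$, the construction places $x_{j+1}$ as a descendant of $x_j$, yielding $km \trprec \dots \trprec (k-1)m+1$.

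For the reverse direction, let $T$ of size $nm$ satisfy the chain conditions. If the root label $r$ were $(k-1)m + j$ with $j \geq 2$, the chain would give $r \trprec r - 1$, contradicting that $r$ is the root; hence $r = (k-1)m+1$ and $|T_L| = (k-1)m$. The chain conditions for blocks $1,\dots,k-1$ restrict to $T_L$, so by induction $T_L$ is an $m$-binary tree. Setting $x_j = (k-1)m+j$ for $j = 2,\dots,m$, the chain forces $x_j$ to be a descendant of $x_{j-1}$ with larger label, hence to lie in the right subtree of $x_{j-1}$; being the smallest label there, $x_j$ is its leftmost node and has no left child in $T$. Define $T_{R_m}$ as the right subtree of $x_m$, and for $j<m$ define $T_{R_j}$ as the right subtree of $x_j$ with the subtree rooted at $x_{j+1}$ excised.

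The critical step is to show that each $T_{R_j}$ occupies a union of complete blocks; by the in-order formula its labels in $T$ form a contiguous interval $[a_j,b_j]$. Suppose $a_j \not\equiv 1 \pmod m$: then $a_j-1$ lies in the same block as $a_j$, so the chain gives $a_j \trprec a_j - 1$, i.e., $a_j-1$ is an ancestor in $T$ of the node $a_j \in T_{R_j}$. But every ancestor of a node of $T_{R_j}$ outside $T_{R_j}$ lies on the path from the root of $T_{R_j}$ up to the root of $T$, which visits only $\{x_1,\dots,x_j\}$ together with internal nodes of $T_{R_1} \cup \dots \cup T_{R_{j-1}}$; yet by the in-order formula $a_j-1$ is either $x_m$ (in which case $a_j = km+1 \equiv 1 \pmod m$ already) or lies in some $T_{R_{j'}}$ with $j'>j$, a contradiction. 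Symmetrically, if $b_j \not\equiv 0 \pmod m$ then $b_j+1 \trprec b_j$ would place $b_j+1$ in the subtree of $b_j$ in $T$; but that subtree consists only of labels in $T_{R_j}$ (the labels of $x_{j+1}$'s subtree are all strictly less than $a_j$), contradicting $b_j+1 > b_j$. Hence $|T_{R_j}|$ is a multiple of $m$, the chain conditions on contained blocks transfer to $T_{R_j}$ after relabelling, and by induction each $T_{R_j}$ is an $m$-binary tree. A direct verification that running the construction of Definition~\ref{def:mbinary} on $(T_L, T_{R_1}, \dots, T_{R_m})$ reproduces $T$ completes the argument. The main obstacle is this complete-block step; everything else is bookkeeping driven by the in-order formula.
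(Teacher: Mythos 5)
Your proof is correct and follows essentially the same route as the paper's: induction on $n$, identifying the root label as $km+1$, peeling off $T_L$ and the secondary roots $x_2,\dots,x_m$ as leftmost nodes of successive right subtrees, and showing each $T_{R_j}$ occupies complete blocks so that the induction hypothesis applies. The only real difference is that you give a full ancestor-path argument for the complete-block step, which the paper's proof merely asserts via its ``if and only if'' claim about which labels lie in $T_{R_1}$.
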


This property can be checked in Figure \ref{fig:mbinary-example} which is the binary search tree of the $m$-binary tree of Figure \ref{fig:mbinary}.
\begin{figure}[ht]
\centering
\scalebox{0.6}{
\input{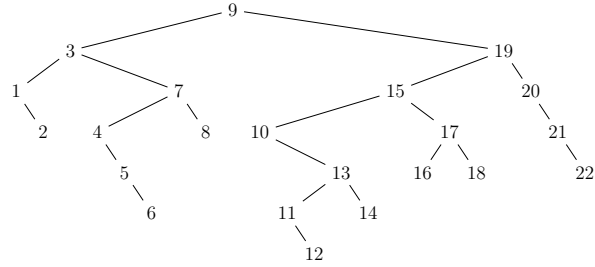}
}
\caption{Binary search tree of a $2$-binary tree. We always have $2k \trprec 2k-1$.}
\label{fig:mbinary-example}
\end{figure}

\begin{proof}
The property is proved by induction on $n$. Let $T$ be a $m$-binary tree composed of  the $m$-binary trees $T_L, T_{R_1}, \dots, T_{R_m}$ which satisfy \eqref{eq:mbinary-cond} by induction hypothesis. We prove that $T$ also satisfies \eqref{eq:mbinary-cond}. The main root of $T$ is labelled by $x = |T_L| + 1$. Because $T_L$ is a m-binary tree, we have that $|T_L| = km$ for some $k \in \NN$, and so $x = km +1$. The $m$-binary tree structure makes it clear that $x+m-1 \trprec x+m-2 \trprec \dots \trprec x$. Besides, the labelling of $T_L$ in $T$ has not been changed, the one of $T_{R_m}$ has been shifted by $|T_L| +m$, the one of $T_{R_{m-1}}$ has been shifted by $|T_L| + |T_{R_m}| + m$ and so on. The labels are only shifted by multiples of $m$ which means that \eqref{eq:mbinary-cond} still holds on $T$.

Now let $T$ be a binary search tree which satisfies \eqref{eq:mbinary-cond}, we have to prove that $T$ satisfies the recursive structure of $m$-binary trees. Let $x$ be the root of $T$. The node $x$ does not precede any element of $T$ so it has to be of the form $x = km + 1$ for some $0 \leq k < n$. Let $T_L$ be the left subtree of $T$, then $|T_L| = km$ and by induction, $T_L$ is a $m$-binary tree. We have $x + 1 \trprec x$, \emph{i.e.}, $x+1$ is in the right subtree of $x$. More precisely, it is the leftmost node of the right subtree. Let $T_{R_1}$ be the binary tree in-between $x$ and $x+1$. For $ 0 \leq a < n$ and $1 \leq b \leq m$ we have that $y = am + b$ is in $T_{R_1}$ if and only if all nodes $am + m \trprec_T am + m-1 \trprec_T \dots \trprec_T am + 1$ are also in $T_{R_1}$. It means $T_{R_1}$ satisfies \eqref{eq:mbinary-cond} and is a $m$-binary tree by induction. The same holds for $T_{R_2}, T_{R_3}, \dots, T_{R_m}$ which gives $T$ the recursive structure of a $m$-binary tree. 
\end{proof}

\begin{Proposition}
\label{prop:m-binary-ideal}
The upper ideal generated by the $(n,m)$-comb is the set of all $m$-binary trees.
\end{Proposition}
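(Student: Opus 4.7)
My plan is to show the two inclusions separately, completing the proof by a counting argument.

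First, I will verify that the $(n,m)$-comb is itself an $m$-binary tree via Proposition~\ref{prop:mbinary-carac}. In the binary search tree labelling of the $(n,m)$-comb---a left-comb of $n$ right-combs of size $m$---the labels $(i-1)m+1, \dots, im$ are precisely those sitting on the $i$-th right-comb (counted from the bottom), with $(i-1)m+1$ at its top and each subsequent label as the right child of the previous. This gives exactly the chain $im \trprec_T im-1 \trprec_T \dots \trprec_T (i-1)m+1$ required by the characterization, for every $i$.

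Next, I will show that the set of $m$-binary trees is closed under right rotation, which together with the first step implies that the upper ideal generated by the $(n,m)$-comb is contained in the set of $m$-binary trees of size $nm$. A right rotation preserves the in-order traversal of the tree, and hence preserves the binary search tree label assigned to every node. The key statement to verify by case analysis is the following: if $a > b$ and $a \trprec_T b$ in a tree $T$, then $a \trprec_{T'} b$ in the tree $T'$ obtained from $T$ by any right rotation $y(x(A,B),C) \mapsto x(A, y(B,C))$. One splits into cases depending on whether $b$ lies in $A$, equals $x$, lies in $B$, equals $y$, or lies in $C$; in each case $a$ necessarily belongs to the same component as $b$ (whose internal structure is left untouched) or to a component that remains inside the right subtree of $b$ after the rotation. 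Applied to the chains $im > im-1 > \dots > (i-1)m+1$, this shows that the condition of Proposition~\ref{prop:mbinary-carac} is preserved under right rotation.

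Finally, I will conclude by comparing cardinalities. From Definition~\ref{def:mbinary}, if $b_k$ denotes the number of $m$-binary trees of size $mk$, the generating function $B(z) = \sum_{k \ge 0} b_k z^k$ satisfies the Fuss--Catalan equation $B(z) = 1 + z\, B(z)^{m+1}$, whose unique power series solution yields $b_n = \frac{1}{mn+1}\binom{(m+1)n}{n}$, the $m$-Catalan number. On the other hand, the upper ideal of the $(n,m)$-comb corresponds (via the bijection of Section~\ref{sec:def}) to the upper ideal of the minimal $m$-Dyck path in $\Tam{nm}{1}$, which is in bijection with the $m$-ballot paths of size $n$; this set has exactly $\frac{1}{mn+1}\binom{(m+1)n}{n}$ elements as well. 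The previously established inclusion, together with equality of cardinalities, forces the two sets to coincide.

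The main obstacle is the case analysis in the second step---carefully tracking how the descendant relation $a \trprec b$ for $a > b$ behaves under a right rotation, regardless of the position of the pair $(a,b)$ relative to the rotation site. Once this preservation is established, the base case and the Fuss--Catalan counting argument are essentially routine.
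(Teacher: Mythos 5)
Your proof is correct, but it takes a genuinely different route from the paper's. The paper's argument is a direct, two-line equivalence: the final forest of the $(n,m)$-comb is exactly the poset \eqref{eq:mbinary-cond}, Proposition~\ref{prop:mbinary-carac} says the $m$-binary trees are exactly the trees whose final forests extend that poset, and Proposition~\ref{prop:comb-prop-trivial}~(iii) says the upper ideal of the comb consists exactly of the trees whose final forests extend that of the comb --- so both inclusions fall out simultaneously. You instead prove only the inclusion ``upper ideal $\subseteq$ $m$-binary trees'' (your rotation-preservation lemma is correct, and is in fact already available in the paper in the form ``a right rotation on the lower tree adds a decreasing relation to the interval-poset,'' i.e.\ $\dec$ only grows going up in the Tamari order), and you close the gap by a Fuss--Catalan cardinality comparison. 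This works, but it buys less self-containedness: your count of the upper ideal rests on the assertion, imported from \cite{mTamari}, that the $m$-Dyck paths form the upper ideal generated by $(1^m0^m)^n$ --- a statement essentially equivalent in strength to the path-level version of the proposition --- whereas the paper's proof re-derives that fact for free. Your count of the $m$-binary trees via $B(z)=1+zB(z)^{m+1}$ also silently uses that the decomposition of Definition~\ref{def:mbinary} into $(T_L,T_{R_1},\dots,T_{R_m})$ is unique; this is true (it is implicit in the proof of Proposition~\ref{prop:mbinary-carac}, where the root nodes are identified as $km+1,\dots,km+m$), but you should say so. If you replace the counting step by the observation that any tree satisfying \eqref{eq:mbinary-cond} has a final forest extending that of the comb, and hence lies above the comb by Proposition~\ref{prop:comb-prop-trivial}~(iii), you recover the paper's shorter self-contained argument.
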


\begin{proof}
The final forest of the $(n,m)$-comb is exactly the poset given by \eqref{eq:mbinary-cond}. We proved by Proposition \ref{prop:mbinary-carac} that $m$-binary trees are the binary trees whose final forests are poset extensions of \eqref{eq:mbinary-cond}. This proves the result by using the properties of interval-posets (Proposition~\ref{prop:comb-prop-trivial}).
\end{proof}

By this last proposition, the sub-lattice of the Tamari lattice of size $m \times n$ on $m$-binary trees is indeed the $m$-Tamari lattice. To obtain a $m$-ballot path $W$ from a $m$-binary tree $T$, we first get the Dyck path $D$ corresponding to $T$. This algorithm is just the well known bijection between binary trees and Dyck paths that we gave in Figure~\ref{fig:dyck-tree}. Proposition \ref{prop:m-binary-ideal} assures us that $D$ is actually a $m$-Dyck path, \emph{i.e.}, the numbers of adjacent up steps are always divisible by $m$. This last property can also be proved directly from the bijection and the structure of $m$-binary trees. Because $D$ is a $m$-Dyck path, one can obtain the $m$-ballot path $W$ by replacing each sequence of two consecutive up steps by one vertical step and each down step by an horizontal step. This way, one obtains a $m$-ballot path from a $m$-binary tree. An example of the bijection is given in Figure \ref{fig:path-tree}.

There is also another, equivalent, way to go from a $m$-binary tree to a $m$-ballot path using both $(m+1)$-ary structures of the objects. Indeed, $m$-binary trees are in clear bijection with $(m+1)$-ary trees: the $m$-binary tree $T$, composed from $T_L, T_{R_1}, \dots, T_{R_m}$, is associated to the $(m+1)$-ary tree $T'$ with one root and $m+1$ subtrees $T_L', T_{R_1}', \dots, T_{R_m}'$ respective images of  $T_L, T_{R_1}, \dots, T_{R_m}$. The $m$-ballot paths can also be described by a recursive $(m+1)$-ary structure. A $m$-ballot path $W$ is either the empty path or given by the word
\begin{equation*}
W = W_L~1~W_{R_m}~0~W_{R_{m-1}}~\dots 0~W_{R_1}~0
\end{equation*}
where $W_1, W_{R_1}, W_{R_2}, \dots, W_{R_m}$ are themselves $m$-ballot paths. We obtain the bijection with $m$-binary trees (and $(m+1)$-ary trees) by setting that the empty path is the image of the empty tree and a non-empty $m$-ballot path $W$ is the image of $T$ if and only if $W_1, W_{R_1}, W_{R_2}, \dots, W_{R_m}$ are the images of respectively $T_L, T_{R_1}, \dots, T_{R_m}$. Note that the order of $W_{R_1}, \dots, W_{R_m}$ has to be reversed in $W$ to match the subtrees of $T$ and the first description of the bijection. This is illustrated in Figure \ref{fig:path-tree}.

\begin{figure}[ht]
\centering
\scalebox{0.8}{
\input{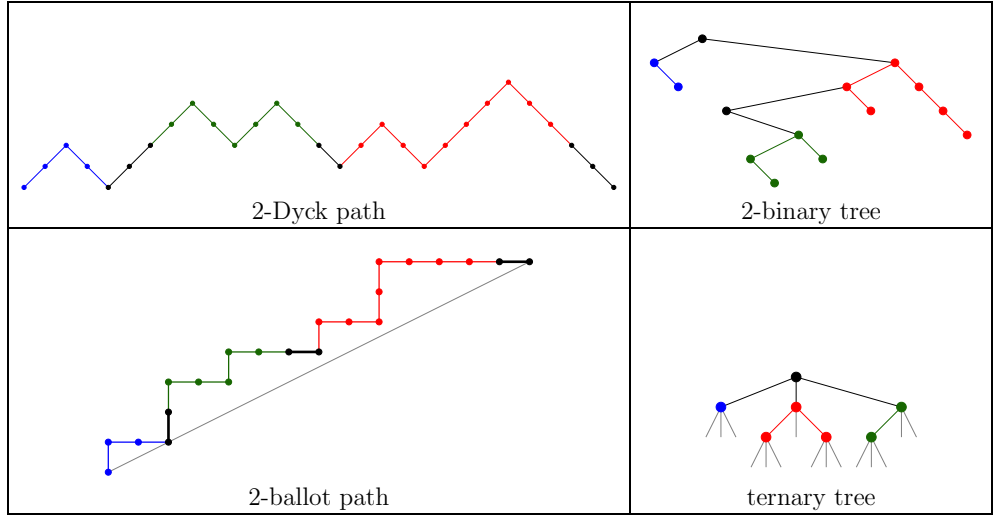}
}
\caption{Bijection between ternary trees and 2-ballot paths: the $2$-binary tree is sent to a Dyck path by the usual bijection, the Dyck path can then be interpreted as a $2$-ballot path. Both the $2$-binary tree and $2$-ballot paths have a ternary structure and are bijectively sent to a ternary tree.}
\label{fig:path-tree}
\end{figure}

The bijection between $m$-binary trees and $(m+1)$-ary trees also allows us to answer a question asked in \cite{mTamari}: what is the description of the $m$-Tamari lattice in terms of $(m+1)$-ary trees? The cover relation on $m$-binary trees is the usual cover relation of the Tamari lattice on binary trees: the right rotation. By Proposition \ref{prop:m-binary-ideal}, we know that it preserves the $m$-binary structure. We can then observe what becomes of $T_L, T_{R_1}, \dots, T_{R_m}$ when applying a rotation on one of the root nodes of $T$. It appears that two different cases arise depending on whether the rotation is made on the main root or on one of the secondary roots. These two cases and their translations in terms of $(m+1)$-ary trees are drawn in Figure \ref{fig:mTamRotation} (for $m=2$). We also give the $m$-Tamari lattices of sizes 2 and 3 for $m=2$ in terms of ternary trees in Figure \ref{fig:mtam-mary}. Note that the description of the lattice in terms of $(m+1)$-ary trees is not needed for the rest of the paper, we will be using only the $m$-binary structure.

\begin{figure}[p]
\begin{fullpage}
  \scalebox{0.8}{\input{figures/mTamari-2-2}}
\input{figures/mTamari-3-2-ternary}

  \caption{$\Tam{2}{2}$ and $\Tam{3}{2}$ on ternary trees.}
  
  \label{fig:mtam-mary}
\end{fullpage}
\end{figure}

\begin{figure}[p]
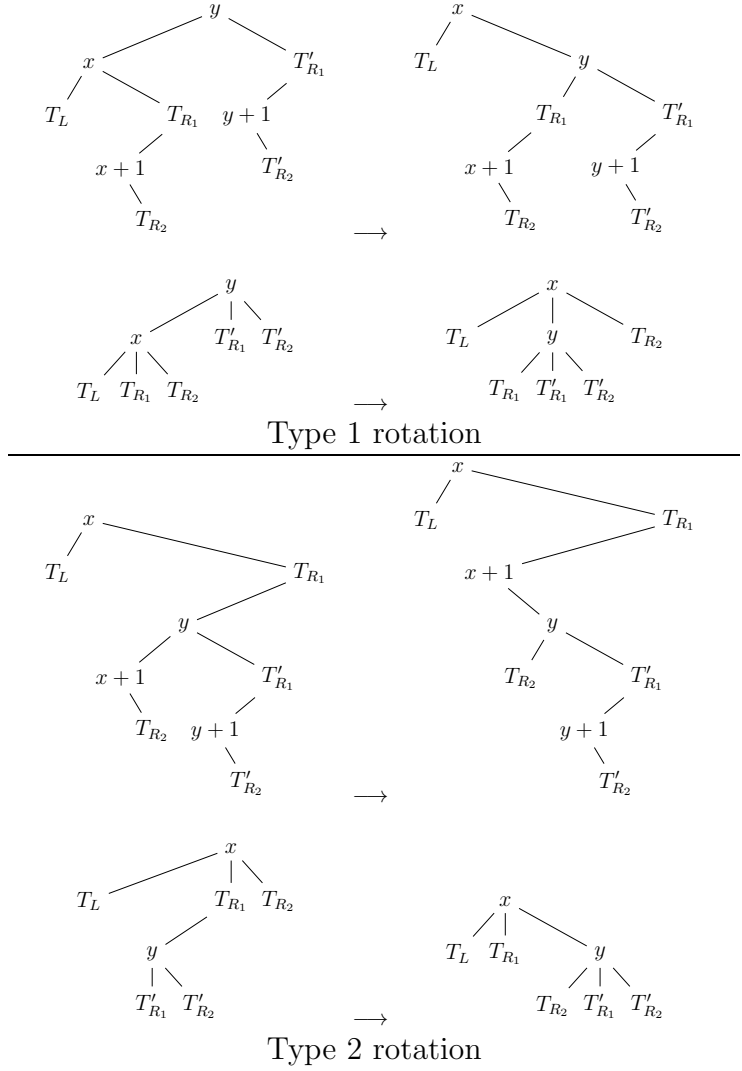

\begin{fullpage}
\begin{tabular}{cc}
\scalebox{0.7}{
\input{figures/mtamari-rotation-type1}
} \\
Type 1 rotation \\
\hline
\scalebox{0.7}{
\input{figures/mtamari-rotation-type2}
} \\
Type 2 rotation
\end{tabular}

\caption{Rotations of type 1 and 2 in $m$-binary trees and $(m+1)$-ary trees.}
\label{fig:mTamRotation}
\end{fullpage}
\end{figure}


\subsection{$m$-Composition and enumeration of intervals}
\label{sec:m-comp}

\begin{Theoreme}
\label{thm:mtamari-functional-equation}
Let $\Phim(x,y)$ be the generating function of intervals of $m$-Tamari where $y$ counts the size of the objects and $x$ the number of touching points of the lowest path (number of contacts with $y=\frac{x}{m}$ after the starting point). Then
\begin{equation}
\Phim = \Bm(\Phim, \dots, \Phim) + 1
\end{equation}
where $\Bm$ is the $(m+1)$-linear operator defined by
\begin{equation}
\label{eq:def-Bm2}
\Bm(f,g_1, \dots, g_m) = f \polleft xy \polright (g_1 \polright ( \dots \polright (g_{m-1} \polright g_m) ) \dots)
\end{equation}
where $\polleft$ and $\polright$ are the left and right products defined in \eqref{eq:polleft-right}.
\end{Theoreme}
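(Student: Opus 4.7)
The proof will parallel that of Theorem~\ref{thm:functional-equation}, lifting the composition machinery of Section~\ref{sec:bilinear-form} to the $m$-Tamari setting via Propositions~\ref{prop:mbinary-carac} and~\ref{prop:m-binary-ideal}. Call an \emph{$m$-interval-poset} an interval-poset of size $nm$ whose relations include, for every $1 \le i \le n$, the block chain $im \trprec im-1 \trprec \cdots \trprec im-(m-1)$; these correspond bijectively to intervals of $\Tam{n}{m}$. Extending the statistic via $\PI(I) := x^{\Itrees(I)} y^{\Isize(I)/m}$ one has $\Phim = \sum \PI(I)$ summed over all $m$-interval-posets. The plan is to define an $(m+1)$-composition
\begin{equation*}
  \BBm(I_0, I_1, \ldots, I_m) \;:=\; I_0 \;\pleft\; \mathbf{u}_m \;\pright\; \bigl( I_1 \;\pright\; ( I_2 \;\pright\; \cdots \;\pright\; I_m ) \bigr),
\end{equation*}
where $\mathbf{u}_m$ is the unique $m$-interval-poset of $m$-size one (the chain $m \trprec \cdots \trprec 1$) and the right products are nested to the right, mirroring~\eqref{eq:def-Bm2}. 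Because $\pleft$ and $\pright$ preserve intra-block relations and $\mathbf{u}_m$ supplies the new block chain verbatim, every poset in the expansion is an $m$-interval-poset; iterating Proposition~\ref{prop:combinatorial-equivalence-composition} and using $\PI(\mathbf{u}_m) = xy$ then gives $\PI(\BBm(I_0, \ldots, I_m)) = \Bm(\PI(I_0), \ldots, \PI(I_m))$.

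The heart of the proof is the analogue of Proposition~\ref{prop:unicity-composition}: every non-empty $m$-interval-poset $I$ appears in $\BBm(I_0, \ldots, I_m)$ for exactly one $(m+1)$-tuple. As in Proposition~\ref{prop:unicity-composition}, I would define the \emph{main vertex} of $I$ to be the largest label $k$ with $i \trprec k$ for all $i < k$; the block-chain assumption forces $k = \ell m + 1$ for some $\ell$, and the block $\{k, \ldots, k+m-1\}$ then plays the role of the main and secondary roots contributed by $\mathbf{u}_m$. Setting $I_0$ to be the restriction of $I$ to $\{1, \ldots, k-1\}$ and recovering $I_1, \ldots, I_m$ from $\{k+m, \ldots, nm\}$ by peeling off successive right products -- each $I_j$ being the sub-interval-poset whose final-forest roots are attached, via the $j$-th layer of decreasing relations, to the rightmost vertex remaining from the previous peel -- yields the unique decomposition. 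Granting this, the argument concludes exactly as in Theorem~\ref{thm:functional-equation}: letting $\mathbb{S}^{(m)}$ denote the formal sum of all $m$-interval-posets, one obtains $\mathbb{S}^{(m)} = \BBm(\mathbb{S}^{(m)}, \ldots, \mathbb{S}^{(m)}) + \emptyset$, and applying $\PI$ delivers $\Phim = \Bm(\Phim, \ldots, \Phim) + 1$.

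The main obstacle will be the unique-decomposition step. Whereas for $m = 1$ the main vertex splits the poset into two pieces in an immediate way, for general $m$ one must argue that the $m - 1$ secondary roots canonically partition the labels above the main block into $m$ consecutive ranges of sizes $|I_1|, \ldots, |I_m|$ (each divisible by $m$) that match exactly the right-associated nesting of the $\pright$ operations in the definition of $\BBm$. This is where the $(m+1)$-ary recursive structure of $m$-binary trees from Definition~\ref{def:mbinary} truly enters, dictating how the subtrees $T_{R_1}, \ldots, T_{R_m}$ must attach through the main and secondary roots, and how a given decreasing relation in $I$ can be assigned to precisely one layer of the nested composition. Once this structural argument is in hand, every other step in the proof is a direct iteration of the arguments from Section~\ref{sec:enumeration}.
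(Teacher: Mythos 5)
Your setup (the definition of $m$-interval-posets, the statistic $\PIm$, and the three-step strategy: a combinatorial $(m+1)$-composition, compatibility with $\Bm$, unique decomposition) matches the paper's. But the composition you propose,
$I_0 \pleft \mathbf{u}_m \pright \bigl( I_1 \pright ( \cdots \pright I_m)\bigr)$,
is precisely the ``simple translation of \eqref{eq:def-Bm2} in terms of $\pleft$ and $\pright$'' that the paper explicitly rejects, and it does fail. By Definition~\ref{def:left-right-product}, $X \pright Y$ attaches roots of $\dec(Y)$ only to the vertex of \emph{maximal label} of $X$; so in your formula every decreasing relation from $I_1,\dots,I_m$ toward the new root block lands on the single vertex $m$ of $\mathbf{u}_m$, and the other $m-1$ root vertices never receive any. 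Already for $m=2$, $n=2$ the poset with relations $2\trprec 1$, $4\trprec 3$, $3\trprec 1$ (a root of the right part attached to the main root but not to the secondary root) is a valid $2$-interval-poset that your composition never produces. Uniqueness also fails: when intermediate arguments are empty, $\emptyset \pright I$ degenerates and $\BBm(\emptyset, I, \emptyset)$ and $\BBm(\emptyset,\emptyset,I)$ yield the same posets, so the expansion both misses intervals and double-counts others ($\Tam{2}{2}$ has $6$ intervals; your scheme reaches only $3$ of them). The agreement of $\PI$ with $\Bm$ that you invoke is a coincidence of the statistic, not evidence of a bijection; moreover $\PI(\emptyset \pright I) \neq 1 \polright \PI(I)$, so even the generating-function identity breaks at empty arguments.

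The missing idea is the rewriting $g_1 \polright g_2 = \tfrac{(xy \polright g_2)\polleft g_1}{xy}$, which the paper uses to interleave the insertion of the $m$ root vertices with the attachment of the subposets. This requires the modified product $\prightx$ (a $\pright$ in which at least one decreasing relation is forced, so that inserting a new singleton $u$ always creates the chain link $2 \trprec 1$ with the previously inserted root vertex), and the composition becomes
$I_L \pleft u \prightx \bigl( ( u \prightx \cdots ((u \pright I_{R_m}) \pleft I_{R_{m-1}}) \pleft \cdots ) \pleft I_{R_1}\bigr)$,
with the $I_{R_j}$ concatenated in \emph{reverse} order after the root block, matching the binary-search-tree labelling of an $m$-binary tree. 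With this operator each root vertex $k+j$ can receive decreasing relations from the final-forest roots of exactly the subposets inserted before it, which is what makes the analogue of Proposition~\ref{prop:unicity-composition} true (the cut points $a_j$ being recovered as in Proposition~\ref{prop:unicity-mcomp}). You correctly flagged the unique-decomposition step as the main obstacle, but with your operator that step is not merely hard --- it is false, so the construction itself has to change before the rest of your (otherwise correctly structured) argument can go through.
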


This new definition of $\Bm$ \eqref{eq:def-Bm2} is equivalent to the previous one~\eqref{eq:m1linear-operator} and this theorem is just a reformulation of Proposition 8 of \cite{mTamari}. In this section, we propose a new proof by generalizing the concept of interval-poset.

\begin{Definition}
\label{def:m-interval}
A $m$-interval-poset is an interval-poset of size $n\times m$ with
\begin{align}
\label{eq:m-condition}
i\cdot m \trprec i\cdot m-1 \trprec \dots \trprec i\cdot m-(m-1)
\end{align}
for all $1 \leq i \leq n$.
\end{Definition}

\begin{Theoreme}
\label{prop:m-interval}
The $m$-interval-posets of size $n$ are in bijection with intervals of $\Tam{n}{m}$.
\end{Theoreme}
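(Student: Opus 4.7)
My plan is to derive Theorem \ref{prop:m-interval} directly from Theorem \ref{prop:tamari-interval-characterization} by identifying $m$-interval-posets with those interval-posets whose underlying Tamari interval lies inside $\Tam{n}{m} \subseteq \Tam{nm}{1}$. Since $\Tam{n}{m}$ is the upper ideal of $\Tam{nm}{1}$ generated by the $(n,m)$-comb (by Proposition \ref{prop:m-binary-ideal}), it suffices to show that an interval-poset $P$ of size $nm$ corresponding to a Tamari interval $[T_1,T_2]$ satisfies the chains \eqref{eq:m-condition} if and only if $T_1$ (hence also $T_2$) is an $m$-binary tree.

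The key observation is structural: by the construction in the proof of Theorem \ref{prop:tamari-interval-characterization}, the subposet obtained by keeping only the decreasing relations of $P$ is exactly $\dec(T_1)$, and conversely the decreasing relations of $P$ are precisely those of $\dec(T_1)$. Since every relation appearing in \eqref{eq:m-condition} is decreasing (the larger integer precedes the smaller), $P$ satisfies \eqref{eq:m-condition} if and only if $\dec(T_1)$ does. But Proposition \ref{prop:mbinary-carac} characterizes $m$-binary trees as exactly those binary search trees whose final forest contains the chains \eqref{eq:m-condition}. Hence $P$ is an $m$-interval-poset if and only if $T_1$ is an $m$-binary tree.

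To finish, I would invoke Proposition \ref{prop:m-binary-ideal}: since $m$-binary trees form an upper ideal of $\Tam{nm}{1}$, the condition that $T_1$ be an $m$-binary tree automatically forces $T_2 \ge T_1$ to be an $m$-binary tree as well, so that $[T_1,T_2]$ is an interval in $\Tam{n}{m}$. Conversely, any interval of $\Tam{n}{m}$ is an interval $[T_1,T_2]$ of $\Tam{nm}{1}$ with both endpoints $m$-binary, whose associated interval-poset then inherits the chains \eqref{eq:m-condition} from $\dec(T_1)$ and therefore is an $m$-interval-poset. Combined with the bijection of Theorem \ref{prop:tamari-interval-characterization}, this yields the desired bijection between $m$-interval-posets of size $n$ and intervals of $\Tam{n}{m}$.

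I do not anticipate a serious obstacle here: the entire argument reduces to matching up the already established characterizations, namely Theorem \ref{prop:tamari-interval-characterization} (interval-posets $\leftrightarrow$ Tamari intervals), Proposition \ref{prop:mbinary-carac} ($m$-binary trees $\leftrightarrow$ binary search trees with the required decreasing chains), and Proposition \ref{prop:m-binary-ideal} ($m$-binary trees form an upper ideal). The only point requiring mild care is the explicit verification that the chain relations \eqref{eq:m-condition} are decreasing and therefore are transported without modification between $P$ and $\dec(T_1)$; everything else is bookkeeping.
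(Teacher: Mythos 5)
Your proposal is correct and follows essentially the same route as the paper's own (much terser) proof: both reduce the statement to Proposition \ref{prop:mbinary-carac} via the identification of the decreasing relations of the interval-poset with $\dec(T_1)$, and both use the upper-ideal property of Proposition \ref{prop:m-binary-ideal} to conclude that $T_2$ is also an $m$-binary tree. Your write-up merely makes explicit the converse direction and the observation that the chains \eqref{eq:m-condition} are decreasing relations, both of which the paper leaves implicit.
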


\begin{proof}
A $m$-interval-poset $I$ corresponds to an interval $[T_1, T_2]$ of the Tamari order of size $n \times m$. By Proposition \ref{prop:mbinary-carac}, the binary tree $T_1$ is a $m$-binary tree. As $T_2 \geq T_1$, then $T_2$ is also a $m$-binary tree and $I$ is an interval of $\Tamnm$. 
\end{proof}

The number of nodes on the border of a $m$-binary tree is the same as on its associated $(m+1)$-ary tree and still corresponds to the number of touch points of the $m$-ballot path. We then define
\begin{equation}
\PIm(I) := x^{\Itrees(I)}y^{\frac{\Isize(I)}{m}},
\end{equation}
and we have
\begin{equation}
\Phim(x,y) = \sum_{I} \PIm(I)
\end{equation}
summed on all $m$-interval-posets.

By composing two $m$-interval-posets, one does not obtain a sum on $m$-interval-posets: the sizes are not multiples of $m$ any more. We have to generalize the $\BB$ composition to a $m$-composition which has to be a $(m+1)$-linear operator. A simple translation of \eqref{eq:def-Bm2} in terms of $\pleft$ and $\pright$ is not enough. Indeed, it wouldn't generate all $m$-interval-posets. However, the following expression
\begin{equation}
g_1 \polright g_2 = \frac{(xy \polright g_2) \polleft g_1}{xy}.
\end{equation}
reflects the $m$-binary structure given by Figure \ref{fig:mbinary}. We then rewrite \eqref{eq:def-Bm2} by using this observation. As an example, for $m=3$, one obtains
\begin{align*}
\Bk{3}(f,g_1,g_2,g_3) &= f \polleft xy \polright \left( g_1 \polright \left( g_2 \polright g_3 \right) \right) \\
&= f \polleft xy \polright \frac{1}{xy} \left( \left( xy \polright \frac{1}{xy}\left( (xy \polright g_3) \polleft g_2 \right) \right) \polleft g_1 \right) \\
&= \frac{1}{y^2} \left( f \polleft xy \polrightx \left( \left( xy \polrightx \left( (xy \polright g_3) \polleft g_2 \right) \right) \polleft g_1 \right) \right)
\end{align*}
where
\begin{align}
f \polrightx g &:= f \polright (\frac{g}{x}) 
= f \Delta(\frac{g}{x}).
\end{align}
The $\polrightx$ operation can be interpreted on interval-posets.

\begin{Definition}
\label{def:polrightx}
Let $I_1$ and $I_2$ be two interval-posets such that $\Itrees(I_2) =k$. Let $y$ be the maximal label of $I_1$ and $x_1, \dots, x_k$ be the roots of $\dec(I_2)$. Then $I_1 \prightx I_2$ is the sum of the $k$ interval-posets $P_1, \dots, P_k$  where $P_i$ is the shifted concatenation of $I_1$ and $I_2$ with the $i$ added decreasing relations: $x_j \trprec y$ for $j \leq i$.
\end{Definition}

The sum $I_1 \prightx I_2$ is just the sum $I_1 \pright I_2$ of Definition \ref{def:left-right-product} minus the $P_0$ poset (the shifted concatenation with no extra decreasing relation). In particular, this means that the obtained interval-posets all have the relation $2 \trprec 1$ because 1 is always the minimal root of $\dec(I_2)$.

\begin{Proposition}
\label{prop:mcomposition}
The $(m+1)$-linear operator $\BBm$ on $m$-interval-posets is defined by
\begin{equation*}
\BBm(I_L, I_{R_1}, I_{R_2}, \dots, I_{R_m}) := I_L \pleft~u~\prightx \left( (u~\prightx (u \prightx \dots ((u \pright I_{R_m}) \pleft I_{R_{m-1}}) \pleft \dots )\pleft I_{R_1} \right) 
\end{equation*}
where $u$ the interval-poset containing a single vertex. Recursively, the definition reads
\begin{equation*}
\BBm(I_L, I_{R_1}, \dots, I_{R_m}) := I_L \pleft \BR(I_{R_1}, \dots, I_{R_m})
\end{equation*}
with
\begin{align*}
\BR(I) &:= u \pright I, \\
\BR(I_1, \dots, I_k) &:=  u \prightx \left( \BR(I_2, \dots, I_k) \pleft I_1 \right).
\end{align*}
The result is a sum of $m$-interval-posets. The $\BBm$ operator is
called the $m$-composition.
\end{Proposition}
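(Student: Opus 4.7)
The plan is to prove two things: that the two given definitions of $\BBm$ coincide, and that every interval-poset appearing in the sum $\BBm(I_L, I_{R_1}, \ldots, I_{R_m})$ is indeed an $m$-interval-poset. The equivalence of the explicit and recursive forms is a straightforward induction on the arity $k$ of $\BR$. For $k=1$ both reduce to $u \pright I_{R_m}$; for $k \geq 2$, unfolding $\BR(I_1, \ldots, I_k) = u \prightx (\BR(I_2, \ldots, I_k) \pleft I_1)$ via the inductive hypothesis recovers the nested expression, and prepending $I_L \pleft (-)$ gives the explicit formula.

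For the main claim, I would trace the construction from inside out. Write $n_L := |I_L|$ and $n_j := |I_{R_j}|$, all multiples of $m$ by hypothesis. The innermost $u \pright I_{R_m}$ places a first $u$ at label $1$ with $I_{R_m}$ at labels $2, \ldots, n_m+1$. Each outer $\pleft I_{R_j}$ appends $I_{R_j}$ at the next block of fresh (larger) labels without renaming existing vertices, while each $u \prightx (-)$ prepends a new vertex at label $1$ and shifts everything else up by one. After all $m$ insertions of $u$ and the final $I_L \pleft (-)$, the $m$ newly introduced vertices occupy the consecutive labels $n_L+1, n_L+2, \ldots, n_L+m$, the outermost $u$ (corresponding to the main root in the $m$-binary tree picture) receives label $n_L+1$, and each $I_{R_j}$ gets shifted by a sum of $n_L$, $m$ and the other $n_{j'}$, hence a multiple of $m$.

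The heart of the argument concerns the chain of decreasing cover relations between the $m$ new vertices. At each step $u \prightx X$, the smallest label of $X$ (namely the previously inserted $u$, shifted to label $2$ in the new numbering) is automatically a root of $\dec(X)$, since no smaller label exists to produce a decreasing relation into it and the preceding $\pleft$ step added only increasing relations. Hence by Definition~\ref{def:polrightx} every summand of $\prightx$ contains the cover relation from this minimum dec-root to the new $u$. Applying this at each of the $m-1$ successive $\prightx$ operations and tracking labels through the subsequent shifts yields the cover relations $(n_L+k+1) \trprec (n_L+k)$ for $k = 1, \ldots, m-1$, and by transitivity the chain $n_L+m \trprec n_L+m-1 \trprec \ldots \trprec n_L+1$ required by Definition~\ref{def:m-interval} for the block $i = n_L/m + 1$.

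Finally, every other block of $m$ consecutive labels lies entirely inside $I_L$ or inside some $I_{R_j}$ with a shift that is a multiple of $m$, so the $m$-condition on those blocks is inherited from the assumed $m$-interval-poset structure of the inputs. The operations $\pleft$, $\pright$, and $\prightx$ each send interval-posets to interval-posets ($\prightx$ being the sub-sum of $\pright$ missing only the term with no added decreasing relation), so every summand of $\BBm(I_L, I_{R_1}, \ldots, I_{R_m})$ is an interval-poset of size $n_L + m + \sum_j n_j$ satisfying the $m$-condition, hence an $m$-interval-poset by Theorem~\ref{prop:m-interval}. The main obstacle is the bookkeeping underlying the relation analysis: one must verify that $\pleft$ never creates a dec-root smaller than the previously inserted $u$ and that its label propagates correctly through the shifts, so that the chain linking the $m$ new root vertices really is forced at each $\prightx$ step.
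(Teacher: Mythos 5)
Your proposal is correct and follows essentially the same route as the paper's proof: trace the construction from the innermost $u \pright I_{R_m}$ outward, track the label shifts (each a multiple of $m$ on the blocks coming from $I_L$ and the $I_{R_j}$), and observe that each $\prightx$ step forces the relation $2 \trprec 1$ because the previously inserted $u$ carries the minimal label and is therefore the smallest root of the decreasing forest, which yields the required chain $n_L+m \trprec \dots \trprec n_L+1$. The only difference is cosmetic: you make explicit the inherited $m$-condition on the untouched blocks and the preservation of interval-poset validity, which the paper leaves implicit.
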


Note that we give the Sage code corresponding to this definition in Appendix \ref{app:sub-sec:mcomp}.

\begin{proof}
Let us first notice that we compose with the interval-poset $u$ exactly $m$ times. It means that $m$ vertices have been added to $I_L, I_{R_1}, I_{R_2},\allowbreak \dots, I_{R_m}$: the size of the obtained intervals are multiples of $m$.

The first operation is $\BR(I_{R_m}) = u \pright I_{R_m}$ which is a sum of interval-posets of size $1 + |I_{R_m}|$. The labels of $I_{R_m}$ have been shifted by 1. The next operation is
\begin{equation}
\BR(I_{R_{m-1}}, I_{R_m}) = u \prightx \left( \BR(I_{R_m}) \pleft I_{R_{m-1}} \right).
\end{equation}

The computation $\BR(I_{R_m}) \pleft I_{R_{m-1}}$ consists of attaching $I_{R_{m-1}}$ to the interval-posets of $u \pright I_{R_m}$ without adding any decreasing relations. The labels of $I_{R_{m-1}}$ are shifted by $1 + |I_{R_m}|$. By doing $ u \prightx \left( \BR(I_{R_m}) \pleft I_{R_{m-1}} \right)$, we obtain a sum of interval-posets which all have the relation $2 \trprec 1$. The labels of $I_{R_m}$ have been shifted by 2 and the those of $I_{R_{m-1}}$ by $2 + |I_{R_m}|$.

By redoing this operation, we obtain that $\BR(I_{R_1}, \dots, I_{R_m})$ is a sum of interval-posets which all have the relations $m \trprec m-1 \trprec \dots \trprec 1$. The labels of $I_{R_m}$ are shifted by $m$, those of $I_{R_{m-1}}$ by $m + |I_{R_m}|$ and so on until $I_{R_1}$ whose labels have been shifted by  $m + |I_{R_2}| + \dots + |I_{R_m}|$. This means that  $\BR(I_{R_1}, \dots, I_{R_m})$ is an $m$-interval-poset. So is $I_L \pleft \BR(I_{R_1}, \dots, I_{R_m})$ because the left product on two $m$-interval-posets is still a $m$-interval-poset. 
\end{proof}

As an example, here is a detailed computation for $m=2$.

\input{figures/mcomposition}

\begin{Proposition}
\label{prop:desc-mcomp}
Let $I_L, I_{R_1}, \dots, I_{R_m}$ be some $m$-interval-posets. The $m$-interval-poset $I_0$ is defined by
\begin{enumerate}[label=(\roman{*}), ref=(\roman{*})]
\item $I_0$ is a poset extension of the shifted concatenation of $I_L$, $r$, $I_{R_m}, I_{R_{m-1}}, \dots, I_{R_1}$ where $r$ is the poset $m \trprec m-1 \trprec \dots \trprec 1$.
\label{def:mcomp:cond:I1}
\item For $k = |I_L| + 1$, we have $i \trprec k$ for all $i \in I_L$ 
\label{def::mcomp:cond:increasingIL}
\item For all $j$ such that $1 \leq j <m$, if $I_{R_j}$ is not empty then we set $a_j$ to be the minimal label of $I_{R_j}$ and we have $i \trprec a_j$ for all $i$ such that $a_j > i > k+j$.
\label{def:mcomp:cond:increasingIR}
\item $I_0$ does not have any other relations. 
\end{enumerate}
Then  $\BBm(I_L, I_{R_1}, \dots, I_{R_m})$ is the sum of the $m$-interval-posets $I$ of size  $m + |I_L| + |I_{R_1}| + \dots + |I_{R_m}|$ such that $I$ is a poset extension of $I_0$ on which only decreasing relations have been added and no relations have been added inside the subposets $I_L, I_{R_1}, \dots, I_{R_m}$. 
\end{Proposition}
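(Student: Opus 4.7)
The plan is to prove the proposition by unfolding the recursive definition of $\BBm$ from Proposition \ref{prop:mcomposition} one operation at a time. Since $\BBm(I_L,I_{R_1},\ldots,I_{R_m}) = I_L \pleft \BR(I_{R_1},\ldots,I_{R_m})$, the task reduces to first describing the formal sum $\BR(I_{R_1},\ldots,I_{R_m})$ and then absorbing the outer $I_L \pleft$.

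I would proceed by descending induction on $k$ to establish an intermediate claim: for each $k \in \{1,\ldots,m\}$, $\BR(I_{R_k},\ldots,I_{R_m})$ is the formal sum of all interval-posets on $m-k+1+|I_{R_k}|+\cdots+|I_{R_m}|$ vertices obtained from a base poset $B_k$ by adding only decreasing relations and without introducing new relations inside $I_{R_k},\ldots,I_{R_m}$. Here $B_k$ is the shifted concatenation of the chain $m-k+1 \trprec \cdots \trprec 1$, then $I_{R_m},I_{R_{m-1}},\ldots,I_{R_k}$, together with the increasing relations added by each $\pleft I_{R_j}$ step that attach an appropriate chain vertex to $a_j$. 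The base case $k=m$ follows directly from the definition of $\pright$: $u \pright I_{R_m}$ is a sum of $\Itrees(\dec(I_{R_m}))+1$ posets indexed by the number of decreasing relations added from the shifted roots of $\dec(I_{R_m})$ to the new vertex, which matches $B_m$ plus all its decreasing extensions.

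For the inductive step, I would analyze the two stages of $u \prightx\bigl(\BR(I_{R_{k+1}},\ldots,I_{R_m}) \pleft I_{R_k}\bigr)$ separately. The $\pleft I_{R_k}$ step shifts $I_{R_k}$ after the previously constructed poset and, by Definition \ref{def:left-right-product}, adds increasing relations from every existing vertex (including the most recently prepended $u$, currently at local label $1$) to $\min I_{R_k}$; this produces exactly the relations described in condition (iii) at index $j=k$ after all later shifts. The $u \prightx$ step then prepends a new vertex, shifts every label up by one, and by Definition \ref{def:polrightx} adds at least one decreasing relation from a root of $\dec$ to the new vertex. The mandatory first relation links consecutive chain vertices and builds the chain of condition (i), while the optional further relations attach additional roots of $\dec$. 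Once the induction concludes, the outer $I_L \pleft$ shifts every label by $|I_L|$, prepends $I_L$, and supplies the increasing relations from $I_L$ to $k = |I_L|+1$, which is condition (ii).

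The main obstacle lies in identifying, at each $\prightx$ step, the finite family of choices (``attach the first $i$ roots of $\dec$'') with the abstract condition of the proposition (``add any set of decreasing relations targeting the new chain vertex that preserves the interval-poset axioms and introduces no new relation inside a subposet''). Two observations resolve this: first, by the interval-poset axiom, adding the single relation $x_j \trprec 1$ for the $j$-th root $x_j$ automatically forces $b \trprec 1$ for every $1 < b < x_j$, so the discrete choices $i \in \{0,1,\ldots,\Itrees(\dec)\}$ enumerate exactly the admissible decreasing extensions directed at the new chain vertex; and second, since every new relation produced by $\pright$ or $\prightx$ has the newly created chain vertex as its target, no relation is ever introduced between two vertices of the same subposet, so the ``no new internal relations'' condition of the proposition is preserved automatically.
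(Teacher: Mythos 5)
Your unfolding of the recursion is essentially the mechanism the paper itself uses: both arguments track, operation by operation, which relations of the output are forced (the increasing ones, which are identical in every term and assemble $I_0$) and which are optional (the decreasing ones chosen in $\pright$ and $\prightx$). Your descending induction on $\BR$, the identification of the choices in $\prightx$ with prefixes of the roots of $\dec$, and the observation that the interval-poset axiom forces downward closure of the set of sources of each new decreasing relation are all sound and fill in details the paper leaves implicit.

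There is, however, one step missing in the reverse inclusion. When you match the terms of $\BBm(I_L,I_{R_1},\dots,I_{R_m})$ against the extensions described in the statement, you silently rephrase the condition as ``add any set of decreasing relations \emph{targeting the new chain vertex}.'' The proposition does not say that: it allows arbitrary added decreasing relations, subject only to none lying inside a subposet. To obtain the stated equality you must also show that an admissible extension of $I_0$ cannot contain a new decreasing relation $c \trprec a$ whose target $a$ is not one of the $m$ chain vertices $k,\dots,k+m-1$, i.e.\ a decreasing relation between two distinct subposets or into $I_L$. This is where the increasing relations of $I_0$ do real work: if $a \in I_{R_{j'}}$ and $c \in I_{R_j}$ with $j<j'$ (so $a<c$), the interval-poset axiom applied to $c \trprec a$ forces $a_j \trprec a$, while condition (iii) already gives $a \trprec a_j$, a cycle; the case $a \in I_L$ yields the same contradiction against $a \trprec k$ from condition (ii). The paper dispatches this in one clause (``because of the increasing relations, there can not be any decreasing relations in-between intervals''), but your write-up omits it entirely; without it you have only shown that $\BBm$ produces admissible extensions and that it contains all of those directed at chain vertices, not that these two families coincide.
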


\begin{proof}
The construction of $\BBm(I_L, I_{R_1}, \dots, I_{R_m})$ follows the structure of a $m$-binary tree. Let $T_L, T_{R_1}, \dots, T_{R_m}$ and  $T_L', T_{R_1}', \dots, T_{R_m}'$ be respectively the lower and upper $m$-binary trees of the intervals $I_L,\allowbreak I_{R_1}, \dots, I_{R_m}$. And let $T$ and $T'$ be respectively the lower and upper trees of $I_0$. Then, because of the increasing relations of $I_0$, the $m$-binary tree $T'$ is the one formed by $T_L', T_{R_1}', \dots, T_{R_m}'$ as in Figure~\ref{fig:mbinary}. This is the common upper tree of all the intervals obtained by $\BBm(I_L, I_{R_1}, \dots, I_{R_m})$. Indeed, increasing relations are the same for all intervals: $\pleft$ corresponds to a plug on left and $ \pright$ and $\prightx$ to a plug on the right. The interval $I_0$ is actually the interval of $\BBm(I_L, I_{R_1}, \dots, I_{R_m})$ with the minimal number of decreasing relations. Indeed, in terms of decreasing relations, it corresponds by definition to a concatenation of $I_L$, $r$, $I_{R_m}, I_{R_{m-1}}, \dots, I_{R_1}$ where $r$ is the poset $m \trprec m-1 \trprec \dots \trprec 1$ which is what we obtain from  $\BBm(I_L, I_{R_1}, \dots, I_{R_m})$.

Now, the intervals satisfying Proposition~\ref{prop:desc-mcomp} are all possible ways of adding decreasing relations to $I_0$ toward vertices of $k+m-1 \trprec k+m-2 \trprec \dots \trprec k$. Indeed, because of the increasing relations, there can not be any decreasing relations in-between intervals $I_L,\allowbreak I_{R_1}, \dots, I_{R_m}$. By definition of $\pright$ and $\prightx$ it is then clear that the intervals of $\BBm(I_L, I_{R_1},\allowbreak \dots, I_{R_m})$ are exactly the extensions of $I_0$ as defined by Proposition~\ref{prop:desc-mcomp}.
\end{proof}

As explained in the proof, the intervals resulting of a $m$-compositions all share the same maximal tree given by the structure of the $m$-binary tree. The minimal trees range from a tree where all minimal trees of composed intervals have been grafted at the left of one another to the $m$-binary tree formed by all minimal trees. This illustrated in the case where $m=2$ in Figure~\ref{fig:mcomp-intervals}.

\begin{figure}[ht]
\centering
\scalebox{0.8}{
\input{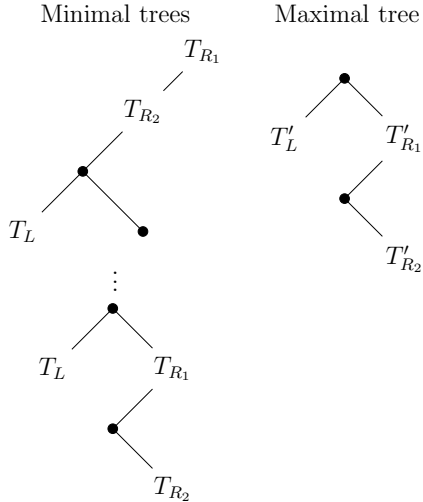}
}
\caption{Minimal and maximal trees of the intervals of a $m$-composition.}
\label{fig:mcomp-intervals}
\end{figure}

\begin{Proposition}
\label{prop:combinatorial-equivalence-mcomp}
Let $I_L, I_{R_1}, \dots, I_{R_m}$ be $m$-interval-posets. Then
\begin{equation*}
\PIm(\BBm(I_L,I_{R_1}, \dots, I_{R_m})) = \Bm(\PIm(I_L),\PIm(I_{R_1}), \dots, \PIm(I_{R_m}))
\end{equation*}
\end{Proposition}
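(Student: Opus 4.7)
The proof extends the combinatorial/algebraic dictionary of Proposition~\ref{prop:combinatorial-equivalence-composition} to cover the third operation $\prightx$, then matches the recursive structure of $\BBm$ against the algebraic rewriting of $\Bm$ sketched just before Definition~\ref{def:polrightx}.

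As a first step, I establish the identity $\PI(I_1 \prightx I_2) = \PI(I_1) \polrightx \PI(I_2)$, analogously to the proof of \eqref{eq:equiv-pright}. Setting $m := \Itrees(I_2)$, the sum $I_1 \prightx I_2 = P_1 + \cdots + P_m$ is obtained from $I_1 \pright I_2$ by dropping the term $P_0$ with no added decreasing relation. Since each added relation merges one tree in $\dec$, one has $\Itrees(P_i) = \Itrees(I_1) + m - i$, so
\begin{equation*}
\PI(I_1 \prightx I_2) = y^{\Isize(I_1)+\Isize(I_2)} x^{\Itrees(I_1)}(1 + x + \cdots + x^{m-1}) = \PI(I_1)\,\Delta\bigl(\PI(I_2)/x\bigr),
\end{equation*}
which is $\PI(I_1) \polrightx \PI(I_2)$ by definition.

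Equipped with this, together with \eqref{eq:equiv-pleft} and \eqref{eq:equiv-pright}, I unfold the recursive definition of $\BBm$ from Proposition~\ref{prop:mcomposition} and translate each combinatorial step into its algebraic counterpart. Writing $f := \PI(I_L)$ and $g_i := \PI(I_{R_i})$, linearity and the three compatibility identities give $\PI(\BBm(I_L, I_{R_1}, \ldots, I_{R_m})) = E(f, g_1, \ldots, g_m)$, where
\begin{equation*}
E(f, g_1, \ldots, g_m) := f \polleft \bigl(xy \polrightx (xy \polrightx (\cdots ((xy \polright g_m) \polleft g_{m-1}) \cdots ) \polleft g_1)\bigr)
\end{equation*}
carries exactly $m$ explicit factors $xy$, one for each of the $m$ root nodes created by the $m$-composition.

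Finally I match $E$ with $\Bm$. Iterating the identity $g_1 \polright g_2 = \bigl((xy \polright g_2) \polleft g_1\bigr)/(xy)$ exactly $m-1$ times on the defining formula of $\Bm$ and then using the elementary equality $xy \polright (h/(xy)) = (xy \polrightx h)/y$ — which is precisely the algebraic rewriting already carried out in the discussion leading to \eqref{eq:def-Bm2} — one obtains $\Bm(f, g_1, \ldots, g_m) = y^{-(m-1)}\, E(f, g_1, \ldots, g_m)$. The proposition now follows by substituting $y \mapsto y^{1/m}$: for an $m$-interval-poset $I$ one has $\PIm(I) = \PI(I)|_{y \to y^{1/m}}$; the single explicit $y$ in $\Bm$ yields $\Bm(\PIm(I_L), \ldots)(x,y) = y^{(m-1)/m}\,\Bm(f, \ldots)(x, y^{1/m})$; while the $m$ explicit $y$'s in $E$ yield $E$ applied to the $\PIm$-values equal to $y^{m-1}\, E(f, \ldots)(x, y^{1/m})$. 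Combining these with $\Bm = y^{-(m-1)} E$ cancels the $y$-bookkeeping cleanly and gives $\Bm(\PIm(I_L), \PIm(I_{R_1}), \ldots, \PIm(I_{R_m})) = \PIm(\BBm(I_L, I_{R_1}, \ldots, I_{R_m}))$. The main subtlety is tracking these $y$-exponents, but the count of $m$ explicit $y$'s in $E$ against a single $y$ in $\Bm$ makes the accounting transparent.
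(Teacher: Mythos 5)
Your proof is correct and follows essentially the same route as the paper's: the key step in both is the compatibility $\PI(I_1 \prightx I_2) = \PI(I_1) \polrightx \PI(I_2)$, established by the same count of $\Itrees(P_i)$, after which the recursive definitions of $\BBm$ and $\Bm$ are matched under the substitution $y \mapsto y^{1/m}$ with the same $y^{m-1}$ bookkeeping. The only difference is cosmetic: the paper runs the chain of equalities directly in the variable $Y = y^{1/m}$ rather than isolating your intermediate operator $E$ and comparing exponents at the end.
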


\begin{proof}
The only thing to prove is
\begin{equation}
\label{eq:equiv-prightx}
\PI(I_1 \prightx I_2) = \PI(I_1) \polrightx \PI(I_2)
\end{equation}
for $I_1$ and $I_2$ two interval-posets. Indeed, let $Y= y^{\frac{1}{m}}$ and $I$ be a $m$-interval-poset of size $nm$, then
\begin{equation*}
\PIm(I)(x,y) = \PI(I)(x,Y).
\end{equation*}
And so if \eqref{eq:equiv-prightx} is satisfied, so are \eqref{eq:equiv-pleft} and \eqref{eq:equiv-pright} and we have
\begin{align*}
&\PIm(\BBm(I_L, I_{R_1}, \dots, I_{R_m})) = \PI \left( \BBm(I_L, I_{R_1}, \dots, I_{R_m}) \right)(x,Y) \\
&= \PI \left( I_L \pleft~u~\prightx \left( (u~\prightx \dots ((u \pright I_{R_m}) \pleft I_{R_{m-1}}) \pleft \dots )\pleft I_{R_1} \right) \right)(x,Y) \\
&= \PI(I_L) \polleft~x.Y~\polrightx \left( (x.Y~\polrightx \dots ((x.Y \polright \PI(I_{R_m})) \polleft \PI(I_{R_{m-1}})) \polleft \dots )\polleft \PI(I_{R_1}) \right) \\
&= Y^{m-1} \PI(I_L) \polleft x.Y \polright \left( \PI(I_{R_1}) \polright \dots \polright (\PI(I_{R_{m-1}}) \polright \PI(I_{R_m}))) \dots \right) \\
&= \Bm(\PI(I_L),\PI(I_{R_1}), \dots, \PI(I_{R_m}))(x,Y) \\
&= \Bm(\PIm(I_L),\PIm(I_{R_1}), \dots, \PIm(I_{R_m})).
\end{align*}

We then prove \eqref{eq:equiv-prightx}. We set $k := \Itrees(I_2)$, we have
\begin{align*}
\Delta \left( \frac{\PI(I_2)}{x} \right) &= \Delta(y^{\Isize(I_2)} x^{k-1}) \\
&= y^{\Isize(I_2)} (1 + x + x^2 + \dots + x^{k-1}), \\
\PI(I_1) \polrightx \PI(I_2) &= y^{\Isize(I_1) + \Isize(I_2)} x^{\Itrees(I_1)}(1 + x + x^2 + \dots + x^{k-1})
\end{align*}
Besides, $I_1 \prightx I_2$ is the sum of interval-posets $P_i$, $1 \leq i \leq k$ where  $\Isize(P_i) = \Isize(I_1) + \Isize(I_2)$ and $\Itrees(P_i) = \Itrees(I_1) + k -i$ which proves the result.
\end{proof}

We can check \eqref{eq:equiv-prightx} on \eqref{eq:mtamari:exemple-prightx}.
\begin{align*}
xy \polrightx y^7(x^4 + x^3) &= y^8x(1+x+x^2 +x^3 + 1 + x +x^2) \\
&= y^8(2x + 2x^2 + 2x^3 + x^4)  
\end{align*}

Besides, by computing
\begin{align*}
\Bm(xy,x^2y^2,xy) &= xy \polleft (xy \polright (x^2y^2 \polright xy)) \\
&= y^5 x  (x \polright x^2(1+x)) \\
&= y^5x^2 (1 + x +x^2 + 1 + x + x^2 + x^3) \\
&= y^5(2x^2 + 2x^3 + 2x^4 + x^5),
\end{align*}
we check the result on \eqref{eq:mtamari:exemple-mcomp}.

\begin{Proposition}
\label{prop:unicity-mcomp}
Let $I$ be a $m$-interval-poset, then there is exactly one list $I_1, \dots, I_{m+1}$ of $m$-interval-posets such that $I$ appears in the $m$-composition $\BBm(I_1, \dots, I_{m+1})$.
\end{Proposition}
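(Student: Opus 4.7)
The approach follows the structure of the binary proof of Proposition \ref{prop:unicity-composition}. I would recover the unique decomposition of $I$ in two stages, first pinning down the main root and $I_L$, then iteratively isolating the block-starts $a_j$ that delimit the $I_{R_j}$'s. For the first stage, let $k$ be the largest vertex of $I$ satisfying $i \trprec k$ for all $i < k$; vertex $1$ always works vacuously, so such a $k$ exists. I would check that $k \equiv 1 \pmod m$: if instead $k = jm + \ell$ with $2 \leq \ell \leq m$, the $m$-interval-poset chain $jm+\ell \trprec \cdots \trprec jm+1$ of Definition \ref{def:m-interval} gives $k \trprec jm+1$, while the maximality property applied to $i = jm+1 < k$ gives $jm+1 \trprec k$, contradicting antisymmetry. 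Hence $|I_L| := k - 1$ is a multiple of $m$, the restriction $I_L := I|_{\{1, \dots, k-1\}}$ is a valid $m$-interval-poset, and the $m$-interval-poset chain at position $k$ exhibits the secondary roots $k+1, \dots, k+m-1$.

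For the second stage, setting $a_0 := |I|+1$, I would define for $j = 1, 2, \dots, m-1$ the quantity $a_j$ to be the smallest $v$ with $k+j < v < a_{j-1}$ such that $i \trprec v$ holds in $I$ for every $k+j \leq i < v$; if no such $v$ exists, I declare $I_{R_j} = \emptyset$ and set $a_j := a_{j-1}$. The candidates are then $I_{R_j} := I|_{\{a_j, \dots, a_{j-1}-1\}}$ and $I_{R_m} := I|_{\{k+m, \dots, a_{m-1}-1\}}$. The characterizing property of $a_j$ comes directly from the composition: the inner $\pleft$ inside $\BR(I_{R_{j+1}}, \dots, I_{R_m}) \pleft I_{R_j}$ of Proposition \ref{prop:mcomposition} adds exactly the increasing relations $i \trprec a_j$ for $i \in \{k+j, k+j+1, \dots, a_j - 1\}$, so the above $a_j$ coincides with the minimal label of the $j$-th non-empty right block. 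Because all block endpoints fall at multiples of $m$, each $I_{R_j}$ is itself an $m$-interval-poset.

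To conclude, I would verify via Proposition \ref{prop:desc-mcomp} that $I$ belongs to $\BBm(I_L, I_{R_1}, \dots, I_{R_m})$: the increasing relations match by construction, the chain relations on the root nodes come from the $m$-interval-poset condition, and the remaining decreasing relations of $I$ toward root nodes correspond to a valid term in the sum. Uniqueness is then immediate since every quantity used ($k$ and the $a_j$'s) was intrinsic to $I$: any other decomposition of $I$ as $\BBm(I_L', I_{R_1}', \dots, I_{R_m}')$ must produce the same main root, the same block starts, and hence the same factors. The main obstacle is the definition of $a_j$ when some $I_{R_j}$ are empty: the naive characterization ``smallest $v > k+j$ with $k+j \trprec v$'' would, in that case, jump to some $a_{j'}$ with $j' < j$, since in the upper tree $T'$ the secondary root $x_j := k+j$ is then directly grafted onto $x_{j-1}$ and has no proper parent of its own. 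The upper bound $v < a_{j-1}$ in the iterative definition is precisely what avoids this, and I would need to verify that for every $v \in (k+j, a_{j-1})$ other than $a_j$, at least one $i \in \{k+j, \dots, v-1\}$ fails to precede $v$---this holds because such a $v$ lies in the right subtree of some $x_\ell$ (for $\ell \geq j$) in $T'$, placing $x_j$ outside the left subtree of $v$ and so violating $x_j \trprec v$.
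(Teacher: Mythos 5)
Your proof is correct and follows the same strategy as the paper's: identify the main root $k$ as the largest vertex preceded by everything below it, locate the block-starts $a_j$ from the increasing relations, cut $I$ accordingly, and conclude via Proposition~\ref{prop:desc-mcomp} that the decomposition is forced. The only substantive difference is your characterization of $a_j$ (smallest $v$ with $k+j < v < a_{j-1}$ such that $i \trprec v$ for all $k+j \leq i < v$), where the paper instead takes the minimal label with $k+j+1 \trprec a_j$ and $k+j \ntrprec a_j$; your version is the one that actually matches the relations produced by the $m$-composition (in the paper's own worked example one has $k+1 = 4 \trprec a_1 = 7$), so the paper's stated condition appears to be shifted by one index. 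Your additional checks --- that $k \equiv 1 \pmod{m}$ so the cut points respect the $m$-block structure, and the upper bound $v < a_{j-1}$ to keep the search from jumping past an empty $I_{R_j}$ --- address points the paper passes over silently and are welcome, though you still assert rather than prove that each $a_j \equiv 1 \pmod{m}$.
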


\begin{proof}
We define $k$ the same way as in the proof of Proposition \ref{prop:unicity-composition}: $k$ is the maximal label such that $i \trprec k$ for all $i < k$. And for the same reasons, $k$ is unique and $I_L$ is made of vertices $i < k$. For $1 \leq j < m$, let $a_j$ be the minimal label such that $k+j+1 \trprec a_j$ and $k + j \ntrprec a_j$. If there is no such label, we set $a_j := \emptyset$. And let $a_m := k+m$ if $k+m-1 \ntrprec k+m$ or $\emptyset$ otherwise.

The vertices $a_1, \dots, a_m$ satisfy Condition~\ref{def:mcomp:cond:increasingIR} of Proposition~\ref{prop:desc-mcomp}. They allow us to cut $I$ into $m+1$ subposets. If $a_j = \emptyset$ then $I_{R_j} = \emptyset$, otherwise $I_{R_j}$ is the subposet of $I$ of which $a_j$ is the minimal label.

All conditions of Proposition~\ref{prop:desc-mcomp} are satisfied and so $I \in \BBm(I_L,\allowbreak I_{R_1}, \dots, I_{R_m})$. Besides, the vertices $a_1, \dots, a_m$
 are the only one to satisfy Condition \ref{def:mcomp:cond:increasingIR} of Proposition \ref{prop:desc-mcomp} without adding any increasing relations to $I_0$: they give the only way to cut the poset $I$.
\end{proof}

\begin{proof}[Proof of Theorem~\ref{thm:mtamari-functional-equation}]
The proof is direct by Propositions~\ref{prop:combinatorial-equivalence-mcomp} and \ref{prop:unicity-mcomp} by the same reasoning as for $m=1$ of Theorem~\ref{thm:functional-equation}.
\end{proof}

With Propositions \ref{prop:combinatorial-equivalence-mcomp} and \ref{prop:unicity-mcomp}, we now have a new proof of the functional equation \eqref{eq:mfunctional-equation} already described in \cite{mTamari}. We can go further and give a generalized version of Theorem \ref{thm:smaller-trees}.

\subsection{Counting smaller elements in $m$-Tamari}
\label{sec:smaller-mtrees}

\begin{Proposition}
\label{prop:sum-mcomposition}
Let $T$ be a $m$-binary tree and $S_T := \sum_{T' \leq T} P_{[T',T]}$, the sum of all $m$-interval-posets with maximal tree $T$. If $T$ is composed of the $m$-binary trees 
 $T_L, T_{R_1}, \dots, T_{R_m}$, then $S_T = \BBm(S_{T_L},\allowbreak S_{T_{R_1}}, \dots, S_{T_{R_m}})$.
\end{Proposition}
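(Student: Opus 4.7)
The plan is to mirror the proof of Proposition~\ref{prop:sum-composition}, with Proposition~\ref{prop:desc-mcomp} playing the role that the elementary description of $\BB$ played for $m=1$. By Proposition~\ref{prop:comb-prop-trivial}~\ref{prop:comb-prop-minmax-inclusion}, an $m$-interval-poset $I$ lies in $S_T$ exactly when it extends the initial interval-poset of $[(n,m)\text{-comb}, T]$, equivalently $\inc(T)$ together with the forced chains $im \trprec im-1 \trprec \dots \trprec im-(m-1)$, by adding only decreasing relations. The task is then to match this with the characterization of $\BBm(S_{T_L}, S_{T_{R_1}}, \dots, S_{T_{R_m}})$ supplied by Proposition~\ref{prop:desc-mcomp}.

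The crux is to identify $\inc(T)$ when $T$ is the $m$-binary tree built from $T_L, T_{R_1}, \dots, T_{R_m}$. Unfolding the recursive rule of Figure~\ref{fig:inc-dec} along the binary structure of $T$ (main root $k = |T_L|+1$ with left subtree $T_L$, then peeling off the $m-1$ secondary roots one by one along the leftmost path of the right subtree) shows by induction that the restriction of $\inc(T)$ to the label block of each constituent coincides with its own initial forest, that the main root and secondary roots occupy the consecutive labels $k, k+1, \dots, k+m-1$, and that the only cross-block increasing relations are precisely the ones prescribed by Proposition~\ref{prop:desc-mcomp}, with the blocks $T_{R_m}, T_{R_{m-1}}, \dots, T_{R_1}$ occurring in that reversed order in the labelling.

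Both inclusions then follow in parallel. Given $I \in S_T$, the restrictions $I_L, I_{R_1}, \dots, I_{R_m}$ of $I$ to the label blocks are $m$-interval-posets extending $\inc(T_L), \inc(T_{R_j})$ respectively by only decreasing relations, so they lie in $S_{T_L}, S_{T_{R_j}}$; and $I$ matches Proposition~\ref{prop:desc-mcomp}'s description applied to these restrictions, since the cross-block additions of $I$ beyond $\inc(T)$ are all decreasing and must terminate at one of the root nodes (the increasing skeleton of $\inc(T)$ rules out any other cross-block relation). Conversely, any element of $\BBm(S_{T_L}, \dots, S_{T_{R_m}})$ is, by Proposition~\ref{prop:desc-mcomp}, obtained from an interval-poset whose increasing skeleton is $\inc(T)$ by adding only decreasing relations, so it extends $\inc(T)$ together with the mandatory $m$-chains by decreasing relations only, and therefore lies in $S_T$.

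The chief obstacle will be the recursive verification in the second paragraph: the secondary roots of $T$ are grafted deep inside the leftmost paths of the subtrees $T_{R_j}$, so one must trace through the in-order traversal carefully to check that they indeed occupy the consecutive labels $k+1, \dots, k+m-1$ and that the initial forest rule of Figure~\ref{fig:inc-dec} applied to $T$ generates exactly the cross-block relations listed in Proposition~\ref{prop:desc-mcomp} (with the empty $T_{R_j}$ cases handled separately, as they contribute no $a_j$ and correspond to pure right-comb segments of the root-node chain). A short induction on $m$, or equivalently on the total size of $T$, along the $m$-binary recursion should close the gap.
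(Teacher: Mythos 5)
Your proposal is correct and follows essentially the same route as the paper: both characterize $S_T$ as the set of decreasing-relation extensions of the initial interval-poset $I_0=[(n,m)\text{-comb},T]$, cut $I_0$ and its extensions into the label blocks of $T_L, T_{R_m},\dots,T_{R_1}$ (with the root nodes at the consecutive labels $k,\dots,k+m-1$), and invoke Proposition~\ref{prop:desc-mcomp} to match these extensions with the terms of $\BBm(S_{T_L},S_{T_{R_1}},\dots,S_{T_{R_m}})$. The recursive verification of the block structure of $\inc(T)$ that you flag as the remaining obstacle is exactly what the paper disposes of with its brief "by construction" appeal to the $m$-binary recursion, so nothing essential is missing.
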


\begin{proof}
Let $I_0$ be the interval $[T_0, T]$ where $T_0$ is the $(n,m)$-comb, \emph{i.e.}, the minimal $m$-binary tree. The increasing relations of $I_0$ are the ones of $T$ and the decreasing relations are \eqref{eq:mbinary-cond}. We cut $I_0$ into the subposets $I_L, I_{R_1}, I_{R_2}, \dots, I_{R_m}$ following the cutting of $T$ (the labels of $I_{R_j}$ in $I_0$ are the ones of $T_{R_j}$ in $T$). By construction, the $m$-interval-posets $I_L, I_{R_1}, \dots, I_{R_m}$ are the initial $m$-Tamari intervals of respectively $T_L, T_{R_1}, \dots, T_{R_m}$.

Let $P$ be an interval-poset of the sum $S_T$, \emph{i.e.} an poset extension of $I_0$ where only decreasing relations have been added. By cutting $P$ in the same way than $I_0$, then $P_L, P_{R_1}, \dots, P_{R_m}$ are extensions of respectively  $I_L, I_{R_1}, \dots, I_{R_m}$ and so appear respectively in  $S_{T_L}, S_{T_{R_1}}, \dots, S_{T_{R_m}}$. And by Proposition~\ref{prop:desc-mcomp}, because the increasing relations of $P$ are those of $I_0$, then $P \in \BBm(P_L, P_{R_1}, \dots, P_{R_m})$.

Conversely, if  $P_L, P_{R_1}, \dots, P_{R_m}$ are elements of respectively $S_L, S_{R_1},\allowbreak \dots, S_{R_m}$, then the increasing relations of the elements of 
$\BBm(P_L,\allowbreak P_{R_1}, \dots, P_{R_m})$ are those of $T$ which make them elements of $S_T$.
\end{proof}

\begin{figure}[ht]
\scalebox{0.8}{
  \input{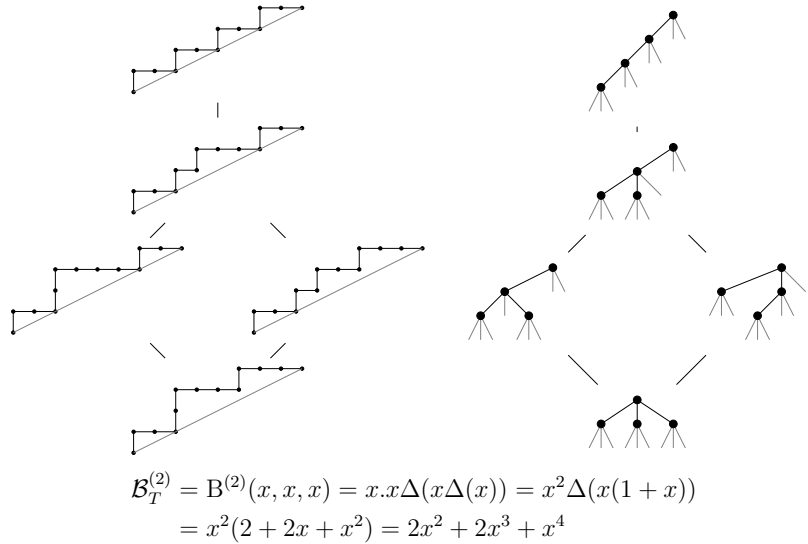}
  }

  \caption{Example of $\BTm{T}$ computation. We compute
    $\BTk{T}{2}$ for the tree at the bottom of the graph and obtain
    $\BTk{T}{2}(1) = 5$ which corresponds to the number of elements
    smaller than or equal to $T$.}

  \label{fig:BmTExemple}

\end{figure}

\begin{Theoreme}
\label{thm:smaller-mtrees}
Let $T$ be a $(m+1)$-ary tree, we define recursively $\BTm{T}(x)$ by:
\begin{align*}
\BTm{\emptyset} &:= 1\text{, and} \\
\BTm{T} &:= \Bm_{y=1}(\BTm{T_L},\BTm{T_{R_1}}, \dots, \BTm{T_{R_m}})
\end{align*}
where $T_L, T_{R_1} \dots, T_{R_m}$ are the subtrees of $T$. Then $\BTm{T}(x)$ counts the number of elements smaller than $T$ in $\Tamnm$ according to the number of nodes on their leftmost branch (or the number of contacts on their ballot-path). In particular, $\BTm{T}(1)$ is the number of elements smaller than $T$ in $\Tamnm$.
\end{Theoreme}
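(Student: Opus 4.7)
The plan is to imitate the proof of Theorem~\ref{thm:smaller-trees}, now working with $m$-interval-posets and the $m$-composition $\BBm$ rather than their binary counterparts. Using the bijection between $(m+1)$-ary trees and $m$-binary trees from Section~\ref{sec:m-trees}, view $T$ as an $m$-binary tree with its canonical decomposition $T_L, T_{R_1}, \dots, T_{R_m}$ (Definition~\ref{def:mbinary}). Define
\begin{equation*}
S_T := \sum_{T' \leq T} P_{[T', T]}
\end{equation*}
as the formal sum of $m$-interval-posets whose maximal tree is $T$; the goal becomes showing $\BTm{T}(x) = \PIm(S_T)|_{y=1}$.

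The induction on $\Isize(T)$ is essentially automatic once the key propositions are invoked. The base case $T = \emptyset$ is trivial since both quantities equal $1$. For the induction step, the induction hypothesis yields $\BTm{T_L}(x) = \PIm(S_{T_L})|_{y=1}$ and $\BTm{T_{R_i}}(x) = \PIm(S_{T_{R_i}})|_{y=1}$ for $i = 1, \dots, m$. Proposition~\ref{prop:sum-mcomposition} gives $S_T = \BBm(S_{T_L}, S_{T_{R_1}}, \dots, S_{T_{R_m}})$, and Proposition~\ref{prop:combinatorial-equivalence-mcomp} transports this identity through $\PIm$:
\begin{equation*}
\PIm(S_T) = \Bm(\PIm(S_{T_L}), \PIm(S_{T_{R_1}}), \dots, \PIm(S_{T_{R_m}})).
\end{equation*}
Specialising at $y = 1$ on both sides and comparing with the recursive definition of $\BTm{T}$ in the statement yields $\PIm(S_T)|_{y=1} = \BTm{T}(x)$.

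It only remains to check that $\PIm(S_T)|_{y=1}$ indeed enumerates elements $T' \leq T$ of $\Tamnm$ refined by the number of nodes on their leftmost branch. By construction $\PIm(P_{[T',T]}) = x^{\Itrees(P_{[T',T]})} y^{n}$ where $n$ is the size of $T$. Since the decreasing relations of the interval-poset $P_{[T',T]}$ are exactly those of $\dec(T')$, we have $\Itrees(P_{[T',T]}) = \Itrees(\dec(T'))$, which is precisely the number of nodes on the leftmost branch of $T'$ (as recalled before Definition~\ref{def:stat}). Summing over $T' \leq T$ and setting $y = 1$ therefore gives the claimed refined count, and evaluating further at $x = 1$ recovers the total number of elements smaller than or equal to $T$ in $\Tamnm$.

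The main obstacle is not conceptual but notational: one must correctly match the subtree decomposition of $T$ as an $m$-binary tree (which determines the $m$-composition and hence the recursive structure of $S_T$) with the $(m+1)$-ary decomposition used in the statement of the theorem. Once this is aligned via the bijection of Section~\ref{sec:m-trees}, the whole argument is a one-line induction piggy-backing on Propositions~\ref{prop:sum-mcomposition} and \ref{prop:combinatorial-equivalence-mcomp}, exactly as in the $m = 1$ case.
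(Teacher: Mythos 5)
Your proof is correct and follows essentially the same route as the paper: an induction on the size of $T$ that combines Proposition~\ref{prop:sum-mcomposition} (the decomposition $S_T = \BBm(S_{T_L}, S_{T_{R_1}}, \dots, S_{T_{R_m}})$) with Proposition~\ref{prop:combinatorial-equivalence-mcomp} (compatibility of $\PIm$ with the $m$-composition). Your additional remarks on the $y=1$ specialisation and on why $\Itrees$ realises the leftmost-branch statistic only make explicit what the paper leaves implicit.
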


See an example of this computation of Figure \ref{fig:BmTExemple}: one can check that the power of $x$ corresponds either to
the number of nodes on the leftmost branch of the tree or to the number of
contacts minus 1 on ballot paths.

\begin{proof}
As in Theorem~\ref{thm:smaller-trees}, we want to prove
\begin{equation*}
\BTm{T} = \PIm(S_T).
\end{equation*}
The result is obtained by an induction on $n$ by Propositions~\ref{prop:combinatorial-equivalence-mcomp} and \ref{prop:sum-mcomposition}
\begin{align*}
\BTm{T} &= \Bm(\BTm{T_L}, \BTm{T_{R_1}}, \dots, \BTm{T_{R_m}}) \\
&= \Bm(\PIm(S_{T_L}), \PIm(S_{T_{R_1}}), \dots, \PIm(S_{T_{R_m}})) \\
&= \PIm\left( \BBm( S_{T_L}, S_{T_{R_1}}, \dots, S_{T_{R_m}} ) \right) \\
&= \PIm(S_T).
\qedhere
\end{align*}
\end{proof}

\subsubsection*{Acknowledgements}
This work has been partially funded by the \emph{SFB F50, Algorithmic and Enumerative Combinatorics}.
The computation and tests needed along the research were done using the open-source mathematical software \texttt{Sage}~\cite{SAGE_WEBSITE} and its combinatorics features developed by the \texttt{Sage-Combinat} community \cite{SAGE_COMBINAT}. 

\bibliographystyle{plain}
\label{sec:biblio}
\bibliography{long-version}

\begin{appendix}
\section{Sage implementation of interval-posets}

This code is also available from the author's webpage as a Demo sage worksheet on SageMathCloud \cite{SAGE_Demo}.

\label{app:sage-interval-posets}
\subsection{Basic example}
\label{app:sub-sec:basic}

Below is the sage code to create an interval-poset, compute its endpoints as binary trees and the list of Dyck paths in the interval. 
\begin{lstlisting}
sage: ip = TamariIntervalPoset(4,[(2,1),(3,1),(2,4),(3,4)]); ip
The tamari interval of size 4 induced by relations [(2, 4), (3, 4), (3, 1), (2, 1)]
sage: view(ip)
sage: ip.lower_binary_tree()
[[., [[., .], .]], .]        
sage: ip.upper_binary_tree()
[., [[., [., .]], .]]
sage: list(ip.dyck_words())
[[1, 1, 1, 0, 0, 1, 0, 0],
 [1, 1, 1, 0, 0, 0, 1, 0],
 [1, 1, 0, 1, 0, 1, 0, 0],
 [1, 1, 0, 1, 0, 0, 1, 0]]
sage: IP4 = TamariIntervalPosets(4); IP4
Interval-posets of size 4
sage: IP4.cardinality()
68
sage: ip in IP4
True
\end{lstlisting}

\subsection{Composition}
\label{app:sub-sec:comp}
The composition function is not yet included in the {\tt TamariIntervalPoset} package. But it can be easily coded using left and right products.

\lstinputlisting[firstline=2,lastline=25]{interval_posets_extra.py}

Here is how we now obtain the computation of Figure~\ref{fig:composition}.

\begin{lstlisting}
sage: ip1 = TamariIntervalPoset(3,[(1,2),(3,2)])
sage: ip2 = TamariIntervalPoset(4,[(2,3),(4,3)])
sage: list(composition(ip1,ip2))
[The tamari interval of size 8 induced by relations [(1, 2), (2, 4), (3, 4), (6, 7), (8, 7), (3, 2)],
 The tamari interval of size 8 induced by relations [(1, 2), (2, 4), (3, 4), (6, 7), (8, 7), (5, 4), (3, 2)],
 The tamari interval of size 8 induced by relations [(1, 2), (2, 4), (3, 4), (6, 7), (8, 7), (6, 4), (5, 4), (3, 2)],
 The tamari interval of size 8 induced by relations [(1, 2), (2, 4), (3, 4), (6, 7), (8, 7), (7, 4), (6, 4), (5, 4), (3, 2)]]
\end{lstlisting}

\end{appendix}

\subsection{$m$-Composition}
\label{app:sub-sec:mcomp}

To obtain the $m$-composition, we add a function corresponding to $\prightx$ and then follow the definition of  Proposition~\ref{prop:mcomposition}.

\lstinputlisting[firstline=27,lastline=49]{interval_posets_extra.py}

We can now compute the example of \eqref{eq:mtamari:exemple-mcomp}.

\begin{lstlisting}
sage: ip1 = TamariIntervalPoset(2,[(2,1)])
sage: ip2 = TamariIntervalPoset(4,[(2,1),(4,3),(2,3)])
sage: ip3 = ip1
sage: list(mcomposition([ip1,ip2,ip3]))
[The tamari interval of size 10 induced by relations [(1, 3), (2, 3), (4, 7), (5, 7), (6, 7), (8, 9), (10, 9), (8, 7), (6, 5), (4, 3), (2, 1)],
 The tamari interval of size 10 induced by relations [(1, 3), (2, 3), (4, 7), (5, 7), (6, 7), (8, 9), (10, 9), (8, 7), (6, 5), (5, 3), (4, 3), (2, 1)],
 The tamari interval of size 10 induced by relations [(1, 3), (2, 3), (4, 7), (5, 7), (6, 7), (8, 9), (10, 9), (8, 7), (7, 3), (6, 5), (5, 3), (4, 3), (2, 1)],
 The tamari interval of size 10 induced by relations [(1, 3), (2, 3), (4, 7), (5, 7), (6, 7), (8, 9), (10, 9), (9, 3), (8, 7), (7, 3), (6, 5), (5, 3), (4, 3), (2, 1)],
 The tamari interval of size 10 induced by relations [(1, 3), (2, 3), (4, 7), (5, 7), (6, 7), (8, 9), (10, 9), (8, 7), (6, 5), (5, 4), (4, 3), (2, 1)],
 The tamari interval of size 10 induced by relations [(1, 3), (2, 3), (4, 7), (5, 7), (6, 7), (8, 9), (10, 9), (8, 7), (7, 3), (6, 5), (5, 4), (4, 3), (2, 1)],
 The tamari interval of size 10 induced by relations [(1, 3), (2, 3), (4, 7), (5, 7), (6, 7), (8, 9), (10, 9), (9, 3), (8, 7), (7, 3), (6, 5), (5, 4), (4, 3), (2, 1)]]
\end{lstlisting}

\end{document}